\newcommand{\harxiv}[1]{ \href{http://arxiv.org/abs/#1}{\texttt{arXiv:#1}}}
\newcommand{\hyref}[2]{ \hyperref[#2]{#1~\ref*{#2}} }
\newcommand{\coloneqq}{\mathrel{\mathop:}=}
\theoremstyle{plain}
\newtheorem{theorem}{Theorem}[section]
\newtheorem{lemma}[theorem]{Lemma}
\newtheorem{corollary}[theorem]{Corollary}
\newtheorem{proposition}[theorem]{Proposition}
\newtheorem{introtheorem}{Theorem}
\theoremstyle{definition}
\newtheorem{remark}[theorem]{Remark}
\newtheorem{remarks}[theorem]{Remarks}
\newtheorem{example}[theorem]{Example}
\newtheorem{definition}[theorem]{Definition}
\newtheorem{notation}[theorem]{Notation}
\newtheorem{convention}[theorem]{Convention}
\newtheorem*{runningexample}{Running Example}
\newtheorem{algorithm}[theorem]{Algorithm}
\newcommand{\Step}[1]{\medskip\noindent\emph{Step #1:}}
\newcommand{\Case}[1]{\smallskip\noindent\emph{Case #1:}}
\newcommand{\Listcase}[1]{\noindent\emph{Case #1:}}
\newcommand{\Casecolon}[1]{\medskip\noindent\emph{Case: #1.}}
\newcommand{\None}{$(${\bf N1}$)$}
\newcommand{\Ntwo}{$(${\bf N2}$)$}
\newcommand{\Nthree}{$(${\bf N3}$)$}
\newcommand{\LGone}{$(${\bf LG1}$)$}
\newcommand{\LGtwo}{$(${\bf LG2}$)$}
\newcommand{\RGone}{$(${\bf RG1}$)$}
\newcommand{\RGtwo}{$(${\bf RG2}$)$}
\newcommand{\Lone}{$(${\bf L1}$)$}
\newcommand{\Ltwo}{$(${\bf L2}$)$}
\newcommand{\Rone}{$(${\bf R1}$)$}
\newcommand{\Rtwo}{$(${\bf R2}$)$}
\newcommand{\D}{$(${\bf D}$)$}
\newcommand{\kk}{{\mathbf{k}}}
\renewcommand{\setminus}{\backslash}
\DeclareMathOperator{\Hom}{\mathsf{Hom}}
\DeclareMathOperator{\Ext}{\mathsf{Ext}}
\DeclareMathOperator{\id}{{\mathsf{id}}}
\DeclareMathOperator{\Rad}{\mathsf{Rad}}
\newcommand{\ind}[1]{\mathsf{ind}(#1)}
\renewcommand{\mod}[1]{\mathsf{mod}(#1)}
\newcommand{\stmod}[1]{\underline{\mathsf{mod}}(#1)} % Stable module category
\newcommand{\proj}[1]{\mathsf{proj}(#1)}
\renewcommand{\hom}{\mathrm{hom}}
\DeclareMathOperator{\rad}{\mathrm{rad}}
\DeclareMathOperator{\Span}{\mathsf{Span}}
\newcommand{\rL}{\hat{\Lambda}} % repetitive algebra
\renewcommand{\graph}[2]{\mcG_{#1, #2}}
\newcommand{\single}[2]{\mcS_{#1, #2}}
\newcommand{\double}[2]{\mcD_{#1, #2}}
\newcommand{\basisC}[2]{\mcB^{\sC}_{#1, #2}}
\newcommand{\basisD}[2]{\mcB^{\sD}_{#1,#2}}
\newcommand{\singleone}[2]{\mcS^1_{#1, #2}}
\newcommand{\doubleone}[2]{\mcD^1_{#1, #2}}
\newcommand{\quasi}[2]{\mcQ_{#1,#2}}
\newcommand{\strings}{\mathsf{St}}
\newcommand{\bands}{\mathsf{Ba}}
\newcommand{\stringsone}{\mathsf{St}_1}
\newcommand{\stringstwo}{\mathsf{St}_2}
\newcommand{\support}[1]{\mathsf{supp}(#1)}
\newcommand{\homotopy}[1]{\mcH(#1)}
\newcommand{\homotopyband}[1]{\widetilde{\mcH}(#1)}
\newcommand{\homotopylayers}[3]{\widetilde{\mcH}^{(#1,#2)}(#3)}
\newcommand{\sC}{\mathsf{C}}
\newcommand{\sD}{\mathsf{D}}
\newcommand{\sK}{\mathsf{K}}
\DeclareMathAlphabet{\mathpzc}{OT1}{pzc}{m}{it}
\newcommand{\bN}{\mathbb{N}}
\newcommand{\mcB}{\mathcal{B}}
\newcommand{\mcC}{\mathcal{C}}
\newcommand{\mcD}{\mathcal{D}}
\newcommand{\mcG}{\mathcal{G}}
\newcommand{\mcH}{\mathcal{H}}
\newcommand{\mcI}{\mathcal{I}}
\newcommand{\mcQ}{\mathcal{Q}}
\newcommand{\mcS}{\mathcal{S}}
\newcommand{\Db}{\sD^b}
\newcommand{\Kb}{\sK^b}
\newcommand{\Kbp}[1]{\Kb(\proj #1)}
\newcommand{\KbpL}{\Kbp{\Lambda}}
\newcommand{\CL}{\sC^{-,b}(\proj \Lambda)}
\newcommand{\DbL}{\Db(\Lambda)}
\newcommand{\KpL}{\sK^{-,b}(\proj \Lambda)}
\newcommand{\cxP}{P^{\bullet}}
\newcommand{\cxQ}{Q^{\bullet}}
\newcommand{\xydot}{{\bullet}}
\newcommand{\arr}{\ar@{-}[r]}
\newcommand{\ard}{\ar@{=}[d]}
\newcommand{\dd}{\partial}
\renewcommand{\path}{\leadsto}
\newcommand{\into}{\hookrightarrow}
\newcommand{\bij}{\stackrel{1-1}{\longleftrightarrow}}
\newenvironment{pmat}{\left( \begin{smallmatrix}}{\end{smallmatrix} \right)}
\newcommand{\smxy}[1]{{\text{\tiny$#1$}}}
\renewcommand{\phi}{\varphi}
\renewcommand{\epsilon}{\varepsilon}
\tikzset{vertex/.style={circle,fill=black,inner sep=1pt,outer sep=2pt},
         tinyvertex/.style={font=\scriptsize,minimum size=6pt},
         smallvertex/.style={inner sep=1pt, font=\small},
         >=stealth',
         leadsto/.style={-angle 90,decorate,decoration=snake,very thick},
         cut/.style={decorate,decoration=saw,very thick}}
\tikzset{
    partial ellipse/.style args={#1:#2:#3}{
        insert path={+ (#1:#3) arc (#1:#2:#3)}
    }
}
\begin{document}

\title[Morphisms between indecomposable complexes]{Morphisms between
  indecomposable complexes in the bounded derived category of a gentle
  algebra}

\author{Kristin Krogh Arnesen}
\address{Institutt for Matematiske Fag, Norges Teknisk-Naturvitenskapelige Universitet, N-7491 Trondheim, Norway.}
\email{kristin.arnesen@math.ntnu.no}

\author{Rosanna Laking}
\address{School of Mathematics, The University of Manchester, Oxford Road, Manchester, M13 9PL, United Kingdom.}
\email{rosanna.laking@manchester.ac.uk}

\author{David Pauksztello}
\address{School of Mathematics, The University of Manchester, Oxford Road, Manchester, M13 9PL, United Kingdom.}
\email{david.pauksztello@manchester.ac.uk}

\keywords{bounded derived category, gentle algebra, homotopy string and band, string combinatorics, morphism}

\subjclass[2010]{18E30, 16G10, 05E10}
%05E10: Combinatorial aspects of representation theory
%16G10: Representations of Artinian rings
%18E30: Derived categories, triangulated categories

\begin{abstract}
In this article we provide a simple combinatorial description of morphisms between indecomposable complexes in the bounded derived category of  a gentle algebra. 
\end{abstract}

\maketitle

{\small
\setcounter{tocdepth}{1}
\tableofcontents
}

%============================================================================
% Introduction
\addtocontents{toc}{\protect{\setcounter{tocdepth}{-1}}}  % No toc entry for Introduction
\section*{Introduction} 
\addtocontents{toc}{\protect{\setcounter{tocdepth}{1}}}   % but enable toc entries for other sections
%============================================================================

Triangulated categories are of central importance in many branches of
mathematics, providing a common framework for algebraists, geometers,
topologists and theoretical physicists, amongst others. Perhaps the
most famous illustration of their utility is Beilinson's equivalences
between the derived categories of coherent sheaves on projective
spaces and representations of certain finite-dimensional algebras
\cite{Beilinson}, which provided deep connections between algebra and
geometry.

In algebra and geometry, the triangulated categories we have in mind
are derived categories and categories constructed from them, for
example, cluster categories. However, despite their utility, there is
a major drawback: the construction of derived categories is abstract
and explicit computation is often difficult. Indeed, much intuition is
often obtained from examples of homological dimension one (=
hereditary), owing to particularly nice homological properties which
allow one to reduce computations to the (well-understood) abelian
categories with which one starts.

Developing intuition in such an abstract setting requires a good
collection of examples, for which computation becomes straightforward
and non-trivial phenomena can be observed concretely. In this article,
we shall show that so-called \emph{gentle algebras} provide a wide
class of such examples. Moreover, given their central place in the
current thrust of research in cluster-tilting theory, where they occur
as surface algebras \cite{ABCP}, concrete understanding of the derived
categories of gentle algebras is both useful and timely.

\subsubsection*{Main results}

A principal way in which one can understand the structure of a
category is, firstly, to describe all its indecomposable objects and,
secondly, the morphisms between them. This is demonstrated very
successfully by Auslander--Reiten (AR) theory.

Let $\Lambda$ be a gentle algebra and let $\Db(\Lambda)$ be its
bounded derived category with shift functor $\Sigma$.
%$\colon \Db(\Lambda) \to \Db(\Lambda)$.
The first step was accomplished by Bekkert and Merklen in \cite{BM}
who, inspired by a classic paper \cite{BR}, described the
indecomposable objects of $\Db(\Lambda)$ by string combinatorics:
these include, in the terminology of \cite{Bo}, the so-called
\emph{homotopy string complexes} and (one-dimensional) \emph{homotopy
  band complexes}, i.e. complexes which can be unfolded to look like
an oriented copy of a Dynkin diagram of type $A$ or type
$\widetilde{A}$, respectively, and whose differentials are paths in
the quiver of $\Lambda$. Thus, a homotopy string or band is none other
than a word whose letters consist of paths in the quiver of $\Lambda$
and their inverses.

In this article, we describe all morphisms between indecomposable
complexes in $\Db(\Lambda)$, thus completing the hands-on
combinatorial framework that facilitates straightforward computation
in these non-trivial categories. The main theorem is as follows.

\begin{introtheorem} \label{introthm:A} Let $X$ and $Y$ be homotopy
  string or one-dimensional band complexes in $\Db(\Lambda)$. Let
  $w_X$ and $w_Y$ be the words corresponding to $X$ and $Y$.  There is
  a canonical basis of $\Hom_{\Db(\Lambda)}(X,Y)$ given by the
  following three classes of maps:
\begin{itemize}
\item \textbf{\textit{graph maps:}} corresponding to the maximal
  overlaps in $w_X$ and $w_Y$ satisfying certain compatibility
  conditions at the endpoints.
\item \textbf{\textit{quasi-graph maps:}} corresponding to maximal
  overlaps in $w_X$ and $w_{\Sigma^{-1}Y}$ satisfying certain
  non-degeneracy conditions at the endpoints; these give rise to
  homotopy classes of maps $X \to Y$.
\item \textbf{\textit{singleton maps:}} certain special maps which can
  be detected easily from the word combinatorics of $w_X$ and $w_Y$.
\end{itemize}
\end{introtheorem}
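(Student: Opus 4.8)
The plan is to reduce the computation of $\Hom_{\Db(\Lambda)}(X,Y)$ to a computation in the homotopy category $\KbpL$ (or $\KpL$) of projective complexes, and then to analyse chain maps and homotopies combinatorially layer-by-layer along the homotopy strings/bands $w_X$ and $w_Y$. Since $\Lambda$ is gentle, hence Gorenstein of finite global dimension on the relevant pieces, every indecomposable in $\Db(\Lambda)$ is quasi-isomorphic to a (possibly one-sided, but here bounded) complex of projectives determined by its word, so it suffices to work with honest chain maps $f\colon \cxP_X \to \cxP_Y$ modulo null-homotopy. The projective $\Lambda$-modules themselves have a well-understood uniserial-ish structure coming from the gentle relations, so $\Hom_\Lambda(P_i, P_j)$ has an explicit basis indexed by paths from $j$ to $i$ in the quiver; a chain map is then a tuple of such path-combinations subject to the commutativity equations imposed by the differentials (which are themselves paths, by Bekkert--Merklen \cite{BM}).

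First I would set up the \emph{unfolded diagram} of each complex: lay $w_X$ out as an oriented $A_\infty$- or $\widetilde A$-shaped quiver whose vertices carry the indecomposable projectives and whose arrows carry the letters (paths or formal inverses) of $w_X$, and similarly for $w_Y$. A homogeneous chain map $f$ of degree $0$ is a choice, for each vertex of the unfolded diagram of $X$, of a map into the appropriate component of $\cxP_Y$, i.e. a $\kk$-linear combination of paths; the chain-map condition says consecutive choices agree after multiplying by the letters. The key structural observation — which I expect is isolated as a lemma earlier in the paper — is that because the algebra is gentle, at each vertex at most one arrow enters and at most one leaves in each of $X$ and $Y$, so these commutativity equations \emph{propagate}: a nonzero value at one spot forces the values at neighbouring spots, and the propagation continues until it is blocked by an endpoint of a word or by a gentle (zero) relation. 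This is exactly what produces the notion of an \emph{overlap} of $w_X$ and $w_Y$: a maximal common subword along which the forced values are all nonzero. Graph maps are the overlaps where the endpoint letters have the right orientation (``$\sigma$ on one side, $\varepsilon$ on the other'') so that the propagation is blocked consistently and the resulting chain map is \emph{not} null-homotopic; quasi-graph maps arise from overlaps which, after shifting $Y$, are blocked by orientations that force the naive chain map to vanish in $\Db(\Lambda)$ but leave behind a nonzero homotopy, hence a ``homotopy class'' rather than a map; singleton maps are the degenerate leftover cases where the overlap is a single vertex with no compatible letters, detected purely from the words.

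Then I would prove the two halves of ``canonical basis''. For \textbf{spanning}: take an arbitrary chain map $f$, and show by the propagation argument that it decomposes as a sum of the ``elementary'' chain maps built from individual overlaps — essentially because the system of linear equations defining $\Hom$ block-decomposes according to overlap regions. For \textbf{linear independence modulo homotopy}: compute the null-homotopy space $\nullhomotopy{X}{Y}$ combinatorially — a homotopy is itself a tuple of paths, and one shows that the image of the homotopy differential is spanned precisely by the ``quasi-graph-type'' combinations together with certain relations among singleton maps, so that the classes of graph maps, quasi-graph maps, and (the surviving) singleton maps are $\kk$-linearly independent in the quotient. The bookkeeping here is where one must be careful that no graph map is secretly null-homotopic, that distinct overlaps give distinct basis elements, and that the quasi-graph maps, which are genuinely homotopy classes with no canonical representative, are counted once each.

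The main obstacle, as usual with string-and-band combinatorics, is the \textbf{endpoint analysis}: away from the ends of $w_X$ and $w_Y$ the propagation is mechanical, but at the four endpoints of the two words (and, for bands, at the places where one passes around the cycle and must match up the scalar from the band's cyclic structure) one has to enumerate the possible configurations of incoming/outgoing letters and of gentle relations, and verify in each case whether the forced chain map is (i) well-defined, (ii) non-null-homotopic, or (iii) only defined up to homotopy — this case division is exactly the origin of the trichotomy graph / quasi-graph / singleton. I would organise this as an exhaustive but finite case check keyed to the ``compatibility at the endpoints'' conditions, citing the sign/orientation conventions fixed earlier, and I expect the bulk of the paper's technical work — and the subtlety in getting the statement exactly right — to live precisely there.
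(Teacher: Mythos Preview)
Your proposal is essentially correct and follows the same strategy as the paper: establish a basis at the level of chain complexes via the propagation/determination principle (your ``a nonzero value at one spot forces the values at neighbouring spots'' is exactly the paper's Lemma~\ref{lemma: basis maps determined}), then pass to the homotopy category by analysing which basis elements are homotopic or null-homotopic. The paper's organisation is slightly cleaner than what you sketch: rather than working directly with ``overlaps modulo homotopy'', it first fixes an explicit $\sC$-basis $\graph{v}{w} \cup \single{v}{w} \cup \double{v}{w}$ (graph maps plus \emph{all} single and double maps, not just the singleton ones) and only then determines the sets $\homotopy{f}$ of homotopy-equivalent basis elements --- the trichotomy graph/quasi-graph/singleton emerges from this second step rather than being built in from the start.

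One imprecision worth flagging: your description of quasi-graph maps as overlaps where ``the naive chain map vanishes in $\Db(\Lambda)$ but leaves behind a nonzero homotopy'' is slightly backwards. A quasi-graph map is an overlap of $w_X$ with $w_{\Sigma^{-1}Y}$ where \emph{none} of the graph-map endpoint conditions hold, so the diagram does not define a chain map $X \to \Sigma^{-1}Y$ at all; instead, the letters appearing along and at the ends of the overlap each individually define single or double maps $X \to Y$, and the content is that these are all homotopic to one another (Proposition~\ref{prop:homotopy-classes}). Also, ``singleton'' maps are not only single-vertex phenomena --- singleton \emph{double} maps (Definition~\ref{def:singleton-double}) have two nonzero components and arise when a letter of $w_X$ and a letter of $w_Y$ share a proper subpath. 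Finally, your aside that gentle implies finite global dimension is false in general; the infinite-string case (Section~\ref{sec:infstrings}) genuinely requires unbounded-to-the-left projective resolutions.
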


The remaining indecomposable complexes in $\Db(\Lambda)$ are
constructed from homotopy band complexes. Homotopy band complexes sit
at the mouths of homogeneous tubes: a tube is indexed by a homotopy
band and a non-zero scalar. The object of length $n$ in a given tube
is specified by the additional data of an $n$-dimensional vector
space. We call these `higher-dimensional' band complexes; we refer to
Section~\ref{sec:bands} for more precise
details. Theorem~\ref{introthm:A} deals with one-dimensional band
complexes and string complexes, which are always
`one-dimensional'. What about maps involving higher dimensional band
complexes? Our second main theorem tells us that we don't have to
worry about them:

\begin{introtheorem} \label{introthm:B} Suppose $X$ and $Y$ are string
  or one-dimensional band complexes in $\Db(\Lambda)$. For $X$ a
  homotopy band, let $X_n$ be its `$n$-dimensional' version, otherwise
  take $n=1$ and $X_1 = X$; similarly for $Y$. Then, generically,
  $\dim \Hom(X_m,Y_n) = mn \cdot \dim \Hom(X,Y)$.
\end{introtheorem}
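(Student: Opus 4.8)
\textbf{Proof proposal for Theorem~\ref{introthm:B}.}

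The plan is to reduce the computation for higher-dimensional band complexes to the one-dimensional case by exploiting the structure of the homogeneous tubes. Recall that if $X$ is a homotopy band complex, then the $n$-dimensional version $X_n$ is obtained from $X$ by replacing the underlying one-dimensional vector spaces governing the band scalar with an $n$-dimensional space, equipped with a single Jordan block (for eigenvalue the band scalar); equivalently, $X_n$ is the middle term of a chain of $n-1$ self-extensions of $X_1 = X$ in $\Db(\Lambda)$, and its endomorphism ring acts through $\kk[t]/(t^n)$. First I would set up these tube complexes precisely, recording that $X_n$ admits a filtration $0 = F_0 \subset F_1 \subset \cdots \subset F_n = X_n$ with each subquotient isomorphic to $X$, and similarly for $Y_n$; this is where the material of Section~\ref{sec:bands} referenced in the introduction is used as a black box.

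Next I would compute $\Hom(X_m, Y_n)$ using the long exact sequences in $\Hom(-, Y_n)$ and $\Hom(X_m, -)$ attached to these filtrations, together with the connecting maps $\Hom(X, Y) \to \Ext^1(X,X)$-type boundary terms. Generically — meaning for the band scalars in sufficiently general position, or more precisely when the bands of $X$ and $Y$ are distinct, or equal with the eigenvalue data avoiding the finitely many coincidences — these boundary maps vanish or the relevant $\Ext^1$ groups between distinct tubes vanish, so the long exact sequences collapse to short exact sequences and one obtains $\dim \Hom(X_m, Y_n) = mn \cdot \dim \Hom(X,Y)$ by additivity along both filtrations. For $X$ and $Y$ string complexes (or one-dimensional bands with $m = n = 1$) there is nothing to prove. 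The mixed cases — one of $X, Y$ a string complex and the other a genuine tube — follow the same additivity argument along the single available filtration, since $\Ext^1$ from a string complex into the tube, or out of the tube into a string complex, contributes nothing generically.

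The key technical input making the boundary maps vanish generically is the classification of morphisms from Theorem~\ref{introthm:A}: the graph, quasi-graph, and singleton maps between band complexes $X$ and $Y$ depend on overlaps of the words $w_X$ and $w_Y$, and the scalar on a closed band walk enters any induced endomorphism multiplicatively, so the obstruction classes in $\Ext^1$ are polynomial in the band scalars and vanish off a proper closed subvariety. I expect the main obstacle to be making the word ``generically'' into a clean statement: one must identify exactly which finitely many relations among the two band scalars (and the relevant roots of unity coming from the lengths of the overlapping subwords) must be excluded, and check that outside this locus the Jordan-block structure of the tube modules interacts with the overlap combinatorics so that no ``accidental'' extra morphisms appear and no generic ones collapse. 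Once that bookkeeping is done, the additivity argument along the two filtrations is routine.
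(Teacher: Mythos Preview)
Your approach is genuinely different from the paper's, and it has a real gap.

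\textbf{What the paper actually does.} The paper does not use filtrations or long exact sequences at all. Instead it gives an explicit ``layered'' unfolded-diagram description of $B_{w,\lambda,r}$ (namely $r$ copies of the one-dimensional band diagram joined by ``link'' arrows), and then shows directly that every map $B_{v,\lambda,r}\to B_{w,\mu,s}$ is obtained by \emph{lifting} a basis map $f'\colon B_{v,\lambda,1}\to B_{w,\mu,1}$ to a pair of layers $(m,n)$. The technical work is then to count, for each homotopy class in $\basisD{v}{w}$, how many linearly independent homotopy classes one obtains in the higher-dimensional Hom space by lifting to the $rs$ pairs of layers; this is done case by case (singleton classes, non-singleton classes passing through the link, identities, shifted identities). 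The meaning of ``generic'' here is sharp and not about Zariski-open conditions on scalars: the formula $\dim\Hom=rs\cdot\dim\Hom$ holds \emph{exactly} except when $X\cong Y$ or $X\cong\Sigma^{-1}Y$, in which cases there is an explicit $\min\{r,s\}$ correction.

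\textbf{Where your argument breaks.} Your proposal hinges on the claim that the connecting maps in the long exact sequences obtained from the tube triangles $X\to X_2\to X\xrightarrow{\eta}\Sigma X$ vanish generically. This is exactly the statement that for every basis map $f\colon X\to Y$ the composite $f\circ\Sigma^{-1}\eta$ is zero (and dually). But $\eta$ is a specific non-zero map --- in the paper's language, it is the quasi-graph map representative corresponding to the identity of the band --- and showing that composition with it kills every graph, quasi-graph and singleton map to $Y$ requires understanding the \emph{composition} of basis maps from Theorem~A. The paper never develops such a composition calculus, and your proposal does not supply one either; the sentence ``the obstruction classes are polynomial in the band scalars and vanish off a proper closed subvariety'' is not an argument, and indeed misidentifies what ``generic'' means here. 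Note also that $\Ext^1(X,Y)$ between distinct tubes need not vanish (the paper gives an explicit example showing the tubes are not hereditary), so you cannot simply kill the target of the connecting map. In short, you have traded a dimension count for a composition problem that is at least as hard, and that problem is left open in your proposal.
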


In Theorem~\ref{introthm:B} care must be taken when $X \cong Y$ or $X
\cong \Sigma^{-1}Y$; see Section~\ref{sec:bands} for precise
details. The module category analogues of these results are classical;
see \cite{C-B,C-B2,Krause}.

In summary: Theorems~\ref{introthm:A} and \ref{introthm:B} reduce some
difficult homological algebra to elementary word
combinatorics. Combined with the description of indecomposables, this
opens up a wide and natural class of examples of triangulated
categories to explicit computation.
%which up to now has been beyond our reach. 

\subsubsection*{Applications and context}
Before continuing, we make some remarks on the potential applications
of these results.  Gentle algebras present us with particularly good
candidates to begin a systematic `hands on' study of derived
categories for a number of reasons:
\begin{itemize}
\item Certain general aspects of the structure of $\Db(\Lambda)$ are
  already known: for instance, the Avella Alaminos--Gei\ss\ invariant
  describes certain fractionally Calabi-Yau triangulated subcategories
  whose AR components have a boundary \cite{Avella-Geiss}.
  % Can we now obtain better derived invariants that capture more of
  % the structure of these categories?
\item The AR theory of $\Db(\Lambda)$ can be computed: using the
  Happel functor $F \colon \DbL \into \stmod{\rL}$ \cite{Happel},
  where $\rL$ is the repetitive algebra of $\Lambda$, Bobi\'nski
  \cite{Bo} gave an algorithm which computes the AR triangles in
  $\DbL$. Indeed, as an application of our results, we recover
  Bobi\'nski's algorithm without recourse to the Happel functor, and
  the often unpleasant computations that ensue. Examples of the kinds
  of results we have in mind are the structural results on the AR
  quiver in \cite{Babaei,AG}, which use the combinatorial invariants
  of \cite{Bastian} to parametrise the AR components completely. Can
  one use these to extend the results of Avella Alaminos and Gei\ss\
  to get better derived invariants?
\item Vossieck \cite{Vossieck} introduced the family of
  \emph{derived-discrete algebras}, for which we now understand
  various non-trivial homological properties \cite{Bo2, BGS, BK, BPP1,
    BPP2}. These algebras are gentle, and thus they can be used as a
  template for further study of derived categories of gentle
  algebras. Here as an application of our results, we recover the
  universal Hom-dimension bound of \cite{BPP1}.
\end{itemize}
 
We expect our results to be useful in the classification of tilting
and, more generally, silting objects for gentle algebras, particularly
for surface algebras; such objects are very closely related to
cluster-tilting objects. We therefore expect that the string
combinatorics here will be adapted to the cluster combinatorics.

Note also that string combinatorics seem not to be confined to
representation theory and cluster theory: such combinatorics occur in
the setting of homological mirror symmetry. For example, analogous
descriptions of indecomposables in categories occurring in this
context were presented by R. Bocklandt in the talk `From A to B via
SYZ' at the workshop `Homological Interactions between Representation
Theory and Singularity Theory' held in Edinburgh in December 2014.

\subsection*{Acknowlegments}

We would like to thank Peter J\o rgensen and Mike Prest for valuable
comments. KA and DP gratefully acknowledge financial support of the
EPSRC through grant EP/K022490/1, and KA would like to thank The
University of Manchester for the kind hospitality during two research
visits. RL and DP also acknowledge the kind hospitality of the Algebra
Group at the Norwegian University of Science and Technology in
Trondheim, and for financial support on a research visit there.

%============================================================================
% Preliminaries and notation
\section{Preliminaries and notation} \label{sec:gentle}
%============================================================================

Let $\Gamma = (\Gamma_0,\Gamma_1)$ be a finite connected quiver. 
Recall from \cite{AS} that a
bound path algebra $\Lambda \cong \kk \Gamma/I$ is called \emph{gentle} if:
\begin{enumerate}[label=(\arabic*)]
\item for each vertex $x$ of $\Gamma$, there are at most two arrows
  starting at $x$ and at most two arrows ending at $x$;
\item for any arrow $a$ in $\Gamma$ there is at most one arrow
  $b$ in $\Gamma$ such that $ab \notin I$ and at most one
  arrow $c$ in $\Gamma$ such that $ca \notin I$;
\item for any arrow $a$ in $\Gamma$ there is at most one arrow
  $b$ in $\Gamma$ such that $ab \in I$ and at most one
  arrow $c$ in $\Gamma$ such that $ca \in I$;
\item the ideal $I$ is generated by paths of length $2$. \label{item:length2}
\end{enumerate}

Let $P(x)$ be the indecomposable projective left $\Lambda$-module
corresponding to $x \in \Gamma_0$.  We recall the following useful
property of gentle algebras; see, for instance \cite[Section 3]{BM}.

\begin{proposition}\label{prop: paths}
There is a bijection
\begin{eqnarray*}
\{\text{paths } p\colon x \path y \text{ in } \Gamma\} & \bij & \{\text{basis elements of } \Hom_{\Lambda}(P(y),P(x))\}; \\
p & \longmapsto & (u \mapsto up).
\end{eqnarray*}
\end{proposition}

\begin{convention} \label{con:maps} From now on, by abuse of notation,
  we shall identify a path $p\colon x \path y$ with its corresponding
  basis element in $\Hom_{\Lambda}(P(y),P(x))$.  
\end{convention}

Throughout this article, we shall fix a gentle algebra $\Lambda = \kk
\Gamma / I$ over an algebraically closed field $\kk$. Algebraic
closure of $\kk$ is not strictly necessary, but it significantly
simplifies the presentation of the combinatorics.

All modules in this paper will be left modules. We shall be interested
in three categories:
\begin{itemize}
\item $\sC\coloneqq \CL$: the category of right bounded complexes of
  finitely generated projective $\Lambda$-modules whose cohomology is
  bounded;
\item $\sK\coloneqq \KbpL$: the homotopy category of bounded complexes
  of finitely generated projective $\Lambda$-modules -- the so-called
  \emph{perfect complexes};
\item $\sD\coloneqq \DbL = \Db(\mod{\Lambda})$: the bounded derived
  category of finitely generated $\Lambda$-modules.
\end{itemize}
Throughout the paper, we shall identify the bounded derived category
$\sD$ with the triangle equivalent category $\KpL$, consisting of
right bounded complexes of finitely generated projective
$\Lambda$-modules whose cohomology is bounded. This identification
allows us to use the combinatorics of homotopy strings and bands,
which will be described in the next section, throughout the paper.  We
direct the reader to consult \cite{Happel} for background on derived
and homotopy categories.

%============================================================================
% Indecomposable objects
\section{Indecomposable objects in $\sD$} \label{sec:indec}
%============================================================================

In this section we give an overview of Bekkert and Merklen's
description of the indecomposable objects in $\sD$.  Their crucial
observation is that it is enough to consider complexes where the
differential is given by matrices whose entries are either zero or a
path (cf. Convention \ref{con:maps}).  The indecomposable objects are
obtained by unravelling the differential into `homotopy strings and
bands' corresponding to perfect complexes, and `infinite homotopy
strings' for the unbounded complexes.
The reader is encouraged to have the following example in mind when
reading this section:

\begin{runningexample}\label{ex:first}
  Let $\Lambda = \kk \Gamma/I$ be given by the quiver
\[
\begin{tikzpicture}
  \node (0) at (0,0) [smallvertex] {$0$};
  \node (1) at (-1,0.8) [smallvertex] {$1$};
  \node (2) at (-1,-0.8) [smallvertex] {$2$};
  \node (3) at (1,0.8) [smallvertex] {$3$};
  \node (4) at (1,-0.8) [smallvertex] {$4$};
  \draw [->] (0) -- node [above, tinyvertex] {$a$} (1);
  \draw [->] (1) -- node [left, tinyvertex] {$b$} (2);
  \draw [->] (2) -- node [below, tinyvertex] {$c$} (0);
  \draw [->] (0) -- node [below, tinyvertex] {$d$} (4);
  \draw [->] (4) -- node [right, tinyvertex] {$e$} (3);
  \draw [->] (3) -- node [above, tinyvertex] {$f$} (0);
  \draw[dotted,thick] (-0.3,0.2) arc (135:225:8pt);
  \draw[dotted,thick] (0.3,0.2) arc (45:-45:8pt);
  \draw[dotted,thick] (-1,0.35) arc (-90:-20:8pt);
  \draw[dotted,thick] (-1,-0.35) arc (90:20:8pt);
  \draw[dotted,thick] (1,0.35) arc (-90:-160:8pt);
  \draw[dotted,thick] (1,-0.35) arc (90:160:8pt);
\end{tikzpicture}
\] 
  
\medskip
\noindent{The following complex, with the leftmost non-zero term in
  cohomological degree $0$, is an indecomposable object of $\sK$:}
\[
\xymatrix{
  0 \ar[r]
  & P(0) \ar[r]^-{\begin{pmat}c & f\end{pmat}}
  & P(2) \oplus P(3) \ar[r]^-{\begin{pmat}b & 0 \\ 0 & e\end{pmat}}
  & P(1) \oplus P(4) \ar[r]^-{\begin{pmat}af \\ 0\end{pmat}}
  & P(3) \ar[r] & 0. \\
}
\]
Observe that the criterion $d^2 = 0$ is obtained by either passing
through a relation, or by having $0$ in the differential.  We notice
that this complex `unfolds' as
\[
\xymatrix@R=0.4pc{
  2 & 1 & 0 & 1 & 2 & 3 \\
  P(4) & P(3) \ar[l]_{e} & P(0) \ar[l]_{f} \ar[r]^{c} & P(2)
  \ar[r]^{b} & P(1) \ar[r]^{af}& P(3) \\
}
\]
where the cohomological degrees are written above each module.
Moreover, the modules appearing are uniquely determined by the
endpoints of the maps, so all information in this complex is
communicated by the diagram
\[
\xymatrix@R=0.4pc{
  2 & 1 & 0 & 1 & 2 & 3 \phantom{\, .} \\
  \xydot & \xydot \ar[l]_{e} & \xydot \ar[l]_{f} \ar[r]^{c} & \xydot
  \ar[r]^{b} & \xydot \ar[r]^{af}& \xydot \, .
}
\]
This is what we will later define as a `homotopy string'.  Another way
of encoding this object is as a `string with degrees'
$(e,2,1)(f,1,0)(c,0,1)(b,1,2)(af,2,3)$.
\end{runningexample}

\begin{remark}\label{Rmk: transpose}  By using transposed matrices, matrix multiplication
(i.e. composing maps) fits with composition of paths.  For instance;
$\begin{pmat}c &f \end{pmat}\begin{pmat}b & 0 \\ 0 & e\end{pmat}
= \begin{pmat}cb & fe\end{pmat} = 0$.
\end{remark}

\subsection{Homotopy strings} \label{sec:strings}

A \emph{homotopy letter} is a triple $(p,i,j)$ where $p$ is a path in
$\Gamma$ with no subpath in $I$, and $i$, $j$ are integers such that
$|i-j| \leq 1$.  The homotopy letter $(p,i,j)$ is called \emph{direct}
if $i<j$ and \emph{inverse} if $i>j$.  If $i=j$ then $p$ must be a
stationary path and is called a \emph{trivial} homotopy letter. We set
$(p,i,j)^{-1} \coloneqq (p,j,i)$.  The starting and ending vertices of
a homotopy letter are defined as
\[
s(p,i,j) = \begin{cases}
  t(p) & \text{if $(p,i,j)$ is inverse;}\\
  s(p) & \text{otherwise;}\end{cases}
\quad \textrm{and} \quad
t(p,i,j) = \begin{cases}
  s(p) & \text{if $(p,i,j)$ is inverse; }\\
  t(p) & \text{otherwise.}\end{cases}
\]

The composition $(p,i,j)(p',i',j')$ is defined if $j = i'$ and
$s(p,i,j) = t(p',i',j')$.  If $(p,i,j)$ is trivial in this situation,
we write $(p,i,j)(p',i',j') \coloneqq (p',i',j')$; similarly if
$(p',i',j')$ is trivial then $(p,i,j)(p',i',j') \coloneqq (p,i,j)$.

\begin{convention}
  We shall often write $p$ as shorthand for $(p,i,j)$. However, the
  degrees $i,j$ should always be considered to be implicitly present.
\end{convention}

A \emph{homotopy string} is a sequence of pairwise
composable homotopy letters
\[
w = \prod_{r=n}^{1} (w_r,i_r,j_r) = (w_n,i_n,j_n)(w_{n-1},i_{n-1},j_{n-1})\cdots(w_2,i_2,j_2)(w_1,i_1,j_1)
\]
such that the following holds:
\begin{enumerate}[label=(\arabic*)]
  \item whenever $(w_r,i,i+1)(w_{r-1},i+1,i+2)$
    occurs, $w_rw_{r-1}$ has a subpath
    in $I$;
  \item whenever $(w_r,i,i-1)(w_{r-1},i-1,i-2)$ occurs, $w_{r-1}w_r$
    has a subpath in $I$;
  \item whenever $(w_r,i,i+1)(w_{r-1},i+1,i)$ occurs, $w_r$ and
    $w_{r-1}$ do not start with the same arrow;
  \item whenever $(w_r,i,i-1)(w_{r-1},i-1,i)$ occurs, $w_r$ and
    $w_{r-1}$ do not end with the same arrow.
\end{enumerate}
Write $s(w) = s(w_1,i_1,j_1)$ and $t(
w) = t(w_n,i_n,j_n)$ if $w$ is non-trivial, and for a trivial homotopy
string $w = (1_x,i,i)$ we write $s(w) = x = t(w)$.  Its inverse is given by
\[
((w_n,i_n,j_n)\cdots(w_1,i_1,j_1))^{-1} \coloneqq
(w_1,j_1,i_1) \cdots (w_n,j_n,i_n) \,.
\]

Another way of expressing a homotopy string $w =
\prod_{r=n}^1(w_r,i_r,j_r)$ is by a diagram
\[
\xymatrix@R=1mm@C=13mm{
i_n & j_n & & j_2 & j_1 \\
\xydot \ar@{-}[r]^{ w_n} & \xydot \ar@{-}[r] & \cdots
\ar@{-}[r] & \xydot \ar@{-}[r]^{ w_1} & \xydot,\\
}
\]
where the line labelled $w_r$ is an arrow pointing to the right if
$w_r$ is direct, and an arrow pointing to the left if $w_r$ is
inverse.  Note that each vertex
$\xymatrix{\ar@{-}[r]^{w_r}&\xydot\ar@{-}[r]^{w_{r-1}}&}$ corresponds
to a unique indecomposable projective $\Lambda$-module, namely $P(s(w_r)) =
P(t(w_{r-1}))$.

Let $w = \prod_{r=n}^1(w_r,i_r,j_r)$ be a homotopy string.  The
\emph{string complex} $P_w$ corresponding to $w$ is constructed as
follows.  We define indexing sets
\[
\mcI_i \coloneqq \begin{cases}\{r ~|~ ( w_r, i, i\pm 1) \text{ is in $ w$} \}
\sqcup \{0\} & \text{if $i = j_1$};\\
\{r ~|~ ( w_r, i, i\pm 1) \text{ is in $ w$} \} &\text{otherwise.}
\end{cases}
\]
Sitting in degree $i$, the corresponding complex has the object
$P^i_w$ given by
\[
\bigoplus_{r \in \mcI_i} P(\phi_w(r)), \quad \text{where $\phi_w(r) =
  t(w_r)$ for $r > 0$ and $\phi_w(0) = s(w_1)$.}
\]
The differentials are defined componentwise: Any direct homotopy
letter $w_r$ yields the component $P(\phi_w(r))
\stackrel{w_r}{\longrightarrow} P(\phi_w(r-1))$, and any inverse
homotopy letter $w_r$ yields the component $P(\phi_w(r-1))
\stackrel{w_r}{\longrightarrow} P(\phi_w(r))$.  Note that $P_{w} \cong
P_{w^{-1}}$.

\subsection{Homotopy bands}\label{sec:hom-bands}

A non-trivial homotopy string $w = \prod_{r=n}^{1}( w_r,i_r,j_r)$ is a
\emph{homotopy band} if $s(w) = t(w)$, $i_n = j_1$, one of
$\{w_n,w_1\}$ is direct and the other inverse, and $w$ is not a proper
power of another homotopy string.

We now describe how to construct \emph{one-dimensional band
  complexes}.  The higher dimen\-sional band complexes will be studied
in Section \ref{sec:bands}, and the definition is given there.  Fix a
homotopy band $w$ and an element $\lambda \in \kk^*$.  Again we define
indexing sets
\[
\mcI_i \coloneqq \{r ~|~ ( w_r, i, i\pm 1) \text{ is in $ w$} \} \, ,
\]
and $B_{w,\lambda,1}^i$ is defined as for string complexes, that is,
\[
B_{w,\lambda,1}^i = \bigoplus_{r \in \mcI_i} P(\phi_w(r))\quad
\text{where $\phi_w(r)$ is as before.}
\]
For band complexes with $n \geq 2$, the components of the differential
are determined as for string complexes, except for that corresponding
to $w_1$, which acquires the scalar $\lambda$: $P(\phi_w(1))
\stackrel{\lambda w_1}{\longrightarrow} P(\phi_w(n))$ for $w_1$
direct, and $P(\phi_w(n)) \stackrel{\lambda w_1}{\longrightarrow}
P(\phi_w(1))$ for $w_1$ inverse.  When $n=2$, the only non-zero
component of the differential of $B_{w,\lambda,1}$ is $\lambda w_1 +
w_2$.  Note that $B_{w,\lambda,1} \cong
B_{w^{-1},\frac{1}{\lambda},1}$.

The diagram notation of a homotopy band is an infinite repeating
diagram, with the scalar $\lambda$ attached to $w_1$ as in the
complex:
\[
\xymatrix@R=1mm@C=8mm{
& i_n & j_n & & j_2 & j_1=i_n & \\
\ar@{-}[r]^-{\lambda w_1} \cdots &\xydot \ar@{-}[r]^-{w_n} & \xydot \ar@{-}[r] & \cdots
\ar@{-}[r] & \xydot \ar@{-}[r]^-{\lambda w_1} & \xydot
\ar@{-}[r]^-{w_n} & {\cdots}.\\
}
\]

\begin{example}\label{ex:band}
  Consider the Running Example.  The homotopy band
  \[
  z = (d,3,2)(e,2,1)(f,1,0)(c,0,1)(b,1,2)(a,2,3) \, ,
  \] 
  together with a fixed scalar $\lambda$, gives rise to the complex
  $B_{z,\lambda,1}$, where again the leftmost component lies in degree
  0,
  \[
  \xymatrix{
    0 \ar[r]
    & P(0) \ar[r]^-{\begin{pmat}c & f\end{pmat}}
    & P(2) \oplus P(3) \ar[r]^-{\begin{pmat}b & 0 \\ 0 & e\end{pmat}}
    & P(1) \oplus P(4) \ar[r]^-{\begin{pmat}\lambda a \\ d\end{pmat}}
    & P(0) \ar[r] & 0. \\
  }
  \]
  We write this as the diagram
  \[
  \xymatrix@R=0.4pc{
    & 3 & 2 & 1 & 0 & 1 & 2 & 3 & \\
    \cdots \ar[r]^{\lambda a}& \xydot & \xydot \ar[l]_{d} & \xydot \ar[l]_{e}
    & \xydot \ar[l]_{f} \ar[r]^{c} & \xydot
    \ar[r]^{b} & \xydot \ar[r]^{\lambda a}& \xydot & {\cdots \, .} \ar[l]_{d} \\
  }
  \]

\end{example}

\begin{convention}
  From now on, unless explicitly needed for emphasis, we shall omit
  the degrees from unfolded diagrams. However, all homotopy letters in
  this article carry degrees, therefore, the reader should be aware
  that they are implicitly always present.
\end{convention}

\subsection{Infinite homotopy strings}\label{sec:infstrings}

These indecomposable complexes only occur when $\Lambda$ has infinite
global dimension: when the global dimension of $\Lambda$ is finite all
the complexes in $\sD$ are isomorphic to perfect complexes.  If
$\Lambda = \kk \Gamma/I$ has infinite global dimension, then $\Gamma$
contains oriented cycles with `full relations'.  Let $\mcC(\Lambda)$
denote the collection of arrows $a \in \Gamma_1$ such that there
exists a repetition-free cyclic path $a_n \cdots a_2 a_1$ in $\Gamma$
such that $a_{i+1} a_i \in I$ for $1 \leq i \leq n$ and $a_1 a_n \in
I$, where $a_1 = a$.
We need the following definition from \cite{Bo}.

\begin{definition} \label{def:antipath} A \emph{direct (resp. inverse)
    antipath} is a homotopy string where all homotopy letters are
  direct (resp. inverse) and arrows in the quiver.
\end{definition}

\begin{definition}
  Let $w = \prod_{k=n}^{1} (w_k,i_k,j_k)$ be a homotopy string. We say
  $w$ is
\begin{enumerate}[label=(\arabic*)]
\item \emph{left resolvable} if $(w_n,i_n,j_n)$ is direct (i.e. $j_n =
  i_n +1$), $i_n \leq i_k, j_k$ for all $1 \leq k < n$, and there
  exists $a \in \mcC(\Lambda)$ such that $(a,i_n - 1, i_n)w$ is a
  homotopy string. We shall say that $w$ is \emph{left resolvable by
    $a$}.
\item \emph{primitive left resolvable} if there is no direct antipath
  $\prod_{k=n}^t (w_k,i_k,j_k)$ such that $\prod_{k=t-1}^1
  (w_k,i_k,j_k)$ is left resolvable.
\end{enumerate}
\end{definition}

One can write down the obvious dual definitions of \emph{(primitive)
  right resolvable}. A homotopy string will be call \emph{(primitive)
  two-sided resolvable} if it is both (primitive) left resolvable and
(primitive) right resolvable.

If $w$ is left (resp. right) resolvable by $a$, then gentleness of
$\Lambda$ ensures that this is the unique arrow in $\Gamma_1$ by which
$w$ is left (resp. right) resolvable.  If $w$ is left resolvable then
$w^{-1}$ is right resolvable, and vice versa. We shall call a left or
right resolvable homotopy string that is not two-sided resolvable
\emph{one-sided resolvable}. There is then the obvious notion of
\emph{primitive one-sided resolvable}.

Suppose $w = \prod_{k=n}^1 (w_k, i_k, j_k)$ is left resolvable by $a_1
\in \mcC(\Lambda)$, which sits in the repetition-free cyclic path $a_m
\ldots a_1$ in $\Gamma$. We form the \emph{left infinite homotopy
  string} ${}^\infty w$ by concatenating infinitely many appropriately
shifted copies of the cycle on the left of $w$, i.e. the unfolded
diagram of ${}^\infty w$ has the form
\[
\xymatrix{
{}^\infty w \colon & \cdots \ar[r]^-{a_1} & \xydot \ar[r]^-{a_m} &
\xydot \ar@{.}[r] & \xydot \ar[r]^-{a_2} & \xydot \ar[r]^-{a_1} &
\xydot \ar[r]^-{w_n} & \xydot \ar@{-}[r]^-{w_{n-1}} & \xydot
\ar@{.}[r] & \xydot \ar@{-}[r]^-{w_1} & \xydot \,.
}
\]
Analogously, we form the \emph{right infinite homotopy string}
$w^\infty$ and the \emph{two-sided infinite homotopy strings}
${}^\infty w^\infty$ from a right resolvable homtopy string or
two-sided resolvable homotopy string, respectively.

If $w$ is one-sided left resolvable we obtain the corresponding
infinite string complex from the left infinite homotopy string
${}^\infty w$, or equivalently the right infinite homotopy string
$(w^{-1})^\infty$. Dually for $w$ right resolvable. For $w$ two-sided
resolvable we represent it by ${}^\infty w^\infty$ and
${}^\infty(w^{-1})^\infty$.

\subsection{The indecomposable objects of $\sD$}\label{sec:indecomp}

We first set up some notation.  A \emph{cyclic rotation} of the
homotopy band $w$ is a homotopy band of the form
\[
( w_k,i_k,j_k)( w_{k-1},i_{k-1},j_{k-1})\cdots( w_1,i_1,j_1)(
w_n,i_n,j_n)\cdots( w_{k+1}, i_{k+1},j_{k+1})\,.
\]
Consider the equivalence relation $\sim^{-1}$ generated by identifying
a homotopy string with its inverse, and the equivalence relation
$\sim^r$ generated by identifying a homotopy band with its cyclic
rotations and their inverses. The following will denote complete sets
of representatives of the specified objects under the given
equivalence relations:
\begin{align*}
\strings & \coloneqq \text{all homotopy strings under } \sim^{-1}; \\
\bands & \coloneqq \text{all homotopy bands under } \sim^r; \\
\stringsone & \coloneqq \text{all primitive one-sided resolvable homotopy strings under } \sim^{-1}; \\ 
\stringstwo & \coloneqq \text{all primitive two-sided resolvable homotopy strings under } \sim^{-1}.
\end{align*}

\begin{theorem}[{\cite[Theorem 3]{BM}}]
There are bijections 
\[
\ind{\sK} \bij \strings \sqcup (\bands \times \kk^* \times \bN)
\quad \text{and} \quad
\ind{\sD \setminus \sK} \bij \stringsone \cup \stringstwo.
\]
\end{theorem}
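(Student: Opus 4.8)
The plan is to follow the strategy of Bekkert and Merklen \cite{BM}, reducing the classification to a matrix problem of ``bunch of semichains'' type. First I would identify $\sD$ with $\KpL$ as above and pass to \emph{minimal} complexes: every object of $\sK$, resp.\ of $\sD = \KpL$, is isomorphic to a complex whose differential has all entries in $\rad\Lambda$, and a homotopy equivalence between two such minimal complexes of finitely generated projectives is automatically an isomorphism of complexes (since for homotopy inverses $g f - \id$ lies in the radical, so $gf$ is invertible). Thus the task becomes: classify, up to isomorphism of complexes, the minimal right-bounded complexes of finitely generated projectives with bounded cohomology; the perfect ones are precisely those with only finitely many nonzero terms.

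By Proposition~\ref{prop: paths} each block of a differential is a $\kk$-linear combination of paths, those with a subpath in $I$ acting as zero. The crucial step is a \emph{normal-form reduction}: using base changes on the indecomposable summands of each $P^i$, one brings the differential to a form in which every row and every column of each block has at most one nonzero entry, and each such entry is a single path with no subpath in $I$. This is where gentleness --- in particular conditions (2) and (3) --- is indispensable: at each vertex there are at most two arrows in each direction, at most one composable pair among them, so the $\Hom$-spaces and the composition rules are rigid enough that the admissible elementary transformations both suffice and terminate. Once this is achieved, the identity $d^2 = 0$ says exactly that any two composable surviving path-entries have composite lying in $I$ (conditions (1), (2) in the definition of a homotopy string), while conditions (3), (4) record that the resulting form is fully reduced.

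In this normal form the remaining problem is precisely a matrix problem of the kind solved by Bondarenko (a \emph{bunch of semichains}): the datum is a $\bZ$-graded collection of indecomposable-projective summands together with path-labelled arrows between consecutive degrees, subject to the gentle constraints. Bondarenko's theorem describes the indecomposables as \emph{strings} --- connected walks, finite or one-/two-sidedly infinite --- and \emph{bands} --- cyclic walks decorated by a Jordan block, hence indexed by a pair $(\lambda,n) \in \kk^* \times \bN$ (algebraic closedness of $\kk$ enters here). Translating back: band-type forms give the band complexes $B_{w,\lambda,n}$, which always have finitely many nonzero terms and so are always perfect, accounting for the factor $\bands \times \kk^* \times \bN$; finite string-type forms give the perfect string complexes $P_w$ with $w \in \strings$. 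An infinite string-type form corresponds to a right-bounded complex with bounded cohomology exactly when, on one end or on both, the underlying walk is eventually periodic along a cyclic path of $\Gamma$ all of whose length-two subpaths lie in $I$ --- that is, exactly when the associated homotopy string is primitive one-sided, resp.\ two-sided, resolvable; every other infinite walk fails right-boundedness or bounded cohomology. This yields the indecomposables indexed by $\stringsone \cup \stringstwo$, which are precisely the indecomposables of $\sD$ that are not perfect.

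Finally I would pass to the chosen representative sets. The symmetries of the canonical forms --- reversing a walk, cyclically rotating a cyclic walk, and the $\lambda \leftrightarrow \lambda^{-1}$ symmetry of a Jordan block under transpose --- are exactly the identifications $P_w \cong P_{w^{-1}}$, $B_{w,\lambda,n} \cong B_{w^{-1}, 1/\lambda, n}$ and the relation $\sim^r$, so quotienting by $\sim^{-1}$ and $\sim^r$ converts Bondarenko's correspondence between distinct canonical forms and non-isomorphic indecomposables into the two asserted bijections. The main obstacle is the normal-form reduction of the second paragraph: it is the only step that genuinely invokes the gentle hypotheses, and making it precise --- equivalently, checking that the associated matrix problem, or else the repetitive algebra $\rL$ accessed via the Happel embedding $F \colon \DbL \into \stmod{\rL}$ of \cite{Happel}, is really of string/bunch-of-chains type --- is where essentially all the work lies; everything afterwards is bookkeeping, modulo care with the boundedness condition that singles out $\stringsone \cup \stringstwo$.
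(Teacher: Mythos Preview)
The paper does not prove this theorem: it is quoted from \cite[Theorem~3]{BM} and used as a black box, so there is no in-paper argument to compare against. Your outline is a faithful sketch of the Bekkert--Merklen strategy --- reduction to minimal complexes, normal-form reduction of the differential exploiting the gentle conditions, recognition of the resulting problem as a Bondarenko-type matrix problem (bunch of semichains), and then reading off strings and bands --- and you correctly flag that the normal-form/matrix-problem step is where the real content lies. One small caveat: the treatment of unbounded complexes and the identification of $\stringsone \cup \stringstwo$ with $\ind{\sD\setminus\sK}$ is not in the original \cite{BM} (which works in $\sK$); this extension is worked out in \cite{Bo} via the Happel functor, so if you were to write this up you would need either to follow Bobi\'nski's route through $\stmod{\rL}$ or to justify carefully that the Bondarenko classification still applies to the right-bounded, cohomologically bounded setting and that the resolvability conditions exactly capture bounded cohomology.
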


We mention here that a homotopy band can always be considered as a
homotopy string, thus we must take the disjoint union: a band gives
rise both to a string complex and a family of band complexes.

%============================================================================
% Description of morphisms
\section{Morphisms between indecomposable objects of
  $\sD$} \label{sec:morphisms}
%============================================================================

In this section, we shall describe a canonical basis for the set of
homomorphisms between (finite or infinite) string and/or
one-dimensional band complexes; higher dimensional bands are dealt
with in Section \ref{sec:bands}.  The proof that the maps described
here do indeed form a basis is contained in Section \ref{sec:proof}.

We first set up some notation and define three canonical classes of
maps. Fix $w \in \strings \sqcup \bands$ and $\lambda_w \in
\kk^*$. Define
\[
Q_w \coloneqq
\left\{
\begin{array}{ll}
P_w            & \text{if $w \in \strings$;} \\
B_{w,\lambda_w,1} & \text{if $w \in \bands$.}
\end{array}
\right.
\]
We have abused notation here by allowing the scalar $\lambda_w$ to
disappear when dealing with $Q_w$ in the case that $w$ is a homotopy
band; it should be treated as implicitly present.

For $f \in \Hom_{\sC}(Q_v,Q_w)$ and a degree $t$, the map $f^t\colon
Q_v^t \rightarrow Q_w^t$ can be written as a matrix between the
finitely-many indecomposable summands of $Q_v^t$ and $Q_w^t$.  Each
entry of this matrix is a linear combination of paths (see Proposition
\ref{prop: paths}).  We refer to a single term in this sum as a
\emph{component} of $f^t$.  Moreover, a \emph{component of $f$} is
taken to be a component of $f^t$ for some degree $t$.  Throughout this
section we fix two homotopy strings or bands $v$ and $w$ and the
corresponding complexes $Q_v$ and $Q_w$.  We consider `maps' between
the unfolded diagrams of $Q_v$ and $Q_w$ which, at each projective,
will look like:
\[ 
\xymatrix@R=2mm{ 
Q_v \colon & \ar@{.}[r] & \xydot \ar@{-}[r]^{v_L} & \xydot \ar[d]^f \ar@{-}[r]^{v_R} & \xydot \ar@{.}[r] & \\
Q_w \colon & \ar@{.}[r] & \xydot \ar@{-}[r]_{w_L} & \xydot  \ar@{-}[r]_{w_R} & \xydot \ar@{.}[r] & \\
} 
\]
If $f$ is at the leftmost end of the unfolded string complex $Q_v$ we
say that $v_L$ is \emph{zero}; likewise for $v_R$, $w_L$, and $w_R$.
Again, $f$ is a linear combination of paths and we will use the term
\emph{component} to refer to a single summand of $f$.

In the next sections we shall define maps occurring in a canonical
basis of $\Hom_{\sC}(Q_v,Q_w)$.

\subsection{Single and double maps}\label{sec:single}

Suppose we are in the following situation: 
\begin{equation}
\xymatrix@R=2mm{ 
Q_v \colon & \ar@{.}[r] & \xydot \ar@{-}[r]^{v_L} & \xydot \ar[d]^f \ar@{-}[r]^{v_R} & \xydot \ar@{.}[r] & \\
Q_w \colon & \ar@{.}[r] & \xydot \ar@{-}[r]_{w_L} & \xydot  \ar@{-}[r]_{w_R} & \xydot \ar@{.}[r] & \\
} 
\label{eq:single}
\end{equation}
where $f$ is some non-stationary path in the quiver. 

\begin{definition}\label{def: single}
  A map in $\Hom_{\sC}(Q_v,Q_w)$ will be called a \emph{single map} if
  it has only one non-zero component whose unfolded diagram is as
  above and satisfies the following conditions:
\begin{itemize}
\item[\Lone] If $v_L$ is direct, then $v_Lf = 0$.
\item[\Ltwo] If $w_L$ is inverse, then $fw_L =0$.
\item[\Rone] If $v_R$ is inverse, then $v_Rf = 0$.
\item[\Rtwo] If $w_R$ is direct, then $fw_R = 0$.
\end{itemize}
Write $\single{v}{w}$ for the set of single maps $Q_v \to Q_w$. 
\end{definition}

\begin{example}\label{ex:single}
Again consider the Running Example.  Let 
\[
v = (dc,1,2)(b,2,3)(a,3,4) \text{ and  } w = (e,2,1)(f,1,0)(c,0,1)(b,1,2)(af,2,3).
\] 
Then we have the following: 
\[ 
\xymatrix@R=1mm{
& & & \xydot \ar[r]^{dc}\ar[dd]^-{dc}& \xydot \ar[r]^b& \xydot \ar[r]^a& \xydot\\
&&&&&& \\
\xydot & \xydot \ar[l]^e & \xydot \ar[l]^f \ar[r]_c & \xydot \ar[r]_b & \xydot \ar[r]_{af} & \xydot & \\
} 
\] 
which gives rise to a single map $P_v \to P_w$: 
\[
\xymatrix@R=6mm@C=7mm{ 
& P(1) \ar[r]^{dc} \ar[d]^-{\left(\begin{smallmatrix} dc \\0 \end{smallmatrix}\right)} & P(4) \ar[d]^{\left(\begin{smallmatrix} 0 \\0 \end{smallmatrix}\right)} \ar[r]^b & P(5) \ar[r]^a \ar[d]^-0 & P(3) \\
P(3) \ar[r]_-{\left(\begin{smallmatrix} c \\ f \end{smallmatrix}\right)} & P(4)\oplus P(2) \ar[r]_-{\left(\begin{smallmatrix} b & 0 \\0 & e \end{smallmatrix}\right)} & P(5) \oplus P(1) \ar[r]_-{\left(\begin{smallmatrix} af & 0 \end{smallmatrix}\right)} & P(2) &
} 
\]
\end{example}

Suppose we have the following situation:
\begin{equation}
\xymatrix@=1.2pc{ 
Q_v \colon & \ar@{.}[r] & \xydot \ar@{-}[r]^{v_L} & \xydot \ar[d]_{f_L} \ar[r]^{u_v} & \xydot \ar@{}[dl]|{(*)} \ar[d]^{f_R} \ar@{-}[r]^{v_R} & \xydot \ar@{.}[r] & \\
Q_w \colon & \ar@{.}[r] & \xydot \ar@{-}[r]_{w_L} & \xydot \ar[r]_-{u_w} & \xydot  \ar@{-}[r]_{w_R} & \xydot \ar@{.}[r] & \\
}
\label{eq:double}
\end{equation}
such that $(*)$ commutes for non-stationary paths $f_L,
f_R$.

\begin{definition}\label{def: double}
  If \Lone\ and \Ltwo\ hold for $f_L$ and \Rone\ and \Rtwo\ hold for
  $f_R$, then the diagram above induces a map $Q_v \to Q_w$ with two
  non-zero components in consecutive degrees given by $f_L$ and $f_R$.
  We call such a map a \emph{double map}; write $\double{v}{w}$ for
  the set of double maps $Q_v \to Q_w$.
\end{definition}

\begin{remark}\label{rem:alldouble} 
  To obtain all double maps $Q_v \to Q_w$ in terms of diagrams as
  above, it is necessary to consider overlaps between unfolded
  diagrams of $Q_v$ and any $Q_{w'}$ such that $Q_w \cong Q_{w'}$; see
  Section \ref{sec:indecomp} for the relevant equivalence relations on
  homotopy strings and bands.
\end{remark}

\begin{example} \label{ex:double}
Returning to the Running Example, consider the homotopy strings:
\[
v = (dc,-2,-1)(b,-1,0)(a,0,1) \text{ and  } w =
(af,3,2)(b,2,1)(c,1,0)(f,0,1)(e,1,2) \, .
\] 
The unfolded diagram
\[
\xymatrix@R=4mm{
& \xydot \ar[r]^{dc}& \xydot \ar[r]^b& \xydot \ar[d]_{a} \ar[r]^a& \xydot \ar[d]^{f} & \\
\xydot & \ar[l]^{af} \xydot & \xydot \ar[l]^b & \xydot \ar[l]^c \ar[r]_f & \xydot \ar[r]_e & \xydot  
}
\] 
gives rise to the double map:
\[ 
\xymatrix@R=6mm{  P(1) \ar[r]^{dc} & P(4) \ar[r]^b & P(5) \ar[r]^a \ar[d]_-a & P(3)\ar[d]^-{\left(\begin{smallmatrix} 0 \\ f \end{smallmatrix}\right)} & & \\
& & P(3) \ar[r]_-{\left(\begin{smallmatrix} c \\ f \end{smallmatrix}\right)} & P(4)\oplus P(2) \ar[r]_-{\left(\begin{smallmatrix} b & 0 \\0 & e \end{smallmatrix}\right)} & P(5) \oplus P(1) \ar[r]_-{\left(\begin{smallmatrix} af & 0 \end{smallmatrix}\right)} & P(2)
} 
\]
\end{example}

\begin{notation} \label{not:maps} Let $f \in \single{v}{w}$ and
  suppose the unique non-zero component of $f$ corresponds to the path
  $p \colon x \path y$ in $\Gamma$. Then we write $f=(p)$. Similarly,
  for $f \in \double{v}{w}$ whose two non-zero components correspond
  to the paths $p \colon x \path y$ and $q \colon x' \path y'$, we
  write $f=(p,q)$. The notation should be suggestive of an infinite
  vector, in which all entries are zero apart from those which are
  written.
\end{notation}

We next highlight two important classes of single and double maps.

\begin{definition}\label{def:singleton}
  Recall the setup in \eqref{eq:single}. A map $f \in \single{v}{w}$
  will be called a \emph{singleton single map} when its non-zero
  component is given by a path $p$ and we are in the situation of the
  following unfolded diagrams (up to inverting one of the homotopy
  strings):
 
 \begin{align*}
(i) \quad 
\xymatrix@R=5mm{
\ar@{.}[r] & \xydot \arr^-{v_r} & \xydot \ar[d]^-{p} &                   & \\
           &                    & \xydot \arr_-{w_s} & \xydot \ar@{.}[r] &  
}
& \quad & (ii) \quad
\xymatrix@R=5mm{
\ar@{.}[r] & \xydot \arr^-{v_{i+1}} & \xydot \ar[d]^-{p} \ar[r]^-{v_i}_-{= p p_R} & \xydot \ar@{.}[r] & \\
           &                    & \xydot \arr_-{w_s}                         & \xydot \ar@{.}[r] &  
} \\
(iii) \quad 
\xymatrix@R=5mm{
\ar@{.}[r] & \xydot \arr^-{v_r}                 & \xydot \ar[d]^-{p} &                   & \\
\ar@{.}[r] & \xydot \ar[r]^-{w_{j}}_-{=p_L p}  & \xydot \arr_-{w_{j-1}} & \xydot \ar@{.}[r] &  
}
& \quad & (iv) \quad
\xymatrix@R=5mm{
\ar@{.}[r] & \xydot \arr^-{v_{i+1}}                 & \xydot \ar[d]^-{p} \ar[r]^-{v_i}_-{= p p_R} & \xydot \ar@{.}[r] & \\
\ar@{.}[r] & \xydot \ar[r]^-{w_j}_-{=p_L p}  & \xydot \arr_-{w_{j+1}} & \xydot \ar@{.}[r] & \, ,  
}
\end{align*}
where $r \in \{1,n\}$, $s \in \{1,m\}$, either $v_r$ is inverse (or
zero) or $v_r p =0$, and either $w_s$ is inverse (or zero) or $p w_s =
0$.  We denote the set of all singleton single maps $Q_v \to Q_w$ by
$\singleone{v}{w}$.
\end{definition}

\begin{definition}\label{def:singleton-double}
  Recall the setup in \eqref{eq:double}. A map $f=(f_L, f_R) \in
  \double{v}{w}$ will be called a \emph{singleton double map} if it
  satisfies the following condition (or its dual).
\begin{itemize}
\item[\D] There exists a non-stationary path $f'$ such that $v_0 = f_L
  f'$ and $w_0 = f'f_R$.
\end{itemize}
We denote the set of all singleton double maps $Q_v \to Q_w$ by
$\doubleone{v}{w}$.
\end{definition}

Condition \D\ means that to find singleton double maps, it is
sufficient to look for homotopy letters $v_i$ of $v$ and $w_j$ of $w$
such that $v_i$ and $w_j$ sit in the same degrees, with the same
orientation, and a `proper right substring' of $v_i$ is a `proper left
substring' of $w_j$.

\subsection{Graph maps and quasi-graph maps} \label{sec:graph}

Suppose the unfolded diagrams of $Q_v$ and $Q_w$ overlap as follows:
\begin{equation*} 
\xymatrix@R=2mm{ 
\text{degrees:} &             &                                         & t_p                                                & t_{p-1}                              &                         & t_1                                & t_0                                &                                    & \\
Q_v  \colon     &  \ar@{.}[r] & \xydot \ar@{.>}[d]_{f_L} \ar@{.}[r]^{v_L} & \xydot \ar@{}[dl]|{(*)} \ar@{=}[d] \ar@{-}[r]^{u_p} & \xydot \ar@{=}[d] \ar@{-}[r]^{u_{p-1}} & \cdots \ar@{-}[r]^{u_2}  & \xydot \ar@{=}[d] \ar@{-}[r]^{u_1} & \xydot \ar@{=}[d] \ar@{.}[r]^{v_R} & \xydot \ar@{}[dl]|{(**)} \ar@{.>}[d]^{f_R} \ar@{.}[r] &  \\
Q_w \colon      &  \ar@{.}[r] & \xydot \ar@{.}[r]_{w_L}                  & \xydot \ar@{-}[r]_{u_p}                             & \xydot \ar@{-}[r]_{u_{p-1}}            & \cdots \ar@{-}[r]_{u_2}  & \xydot \ar@{-}[r]_{u_1}            & \xydot \ar@{.}[r]_{w_R}             & \xydot \ar@{.}[r]                  &  \\
\text{degrees:} &             &                                         & t_p                                                & t_{p-1}                               &                         & t_1                               & t_0                                &                                    &
}
\end{equation*}
such that $v_L \neq w_L$ and $v_R \neq w_R$.  The double lines
represent isomorphisms and all of the squares with solid lines commute
as paths in the quiver.  Consider the following \emph{left endpoint
  conditions}.
\begin{itemize}
\item[\LGone] The arrows $v_L$ and $w_L$ are either both direct or
  both inverse and there exists some (scalar multiple of a)
  non-stationary path $f_L$ such that the square $(*)$ commutes.
\item[\LGtwo] The arrows $v_L$ and $w_L$ are neither both direct nor
  both inverse.  In this case, if $v_L$ is non-zero then it is inverse
  and if $w_L$ is non-zero then it is direct.
\end{itemize}
There are dual \emph{right endpoint conditions}, which we call \RGone\
and \RGtwo, respectively.

\begin{definition}\label{def:graphmaps}
  If one of \LGone\ or \LGtwo\ hold and one of \RGone\ or \RGtwo\
  hold, the diagram induces a map $Q_v \to Q_w$, whose non-zero
  components are exactly those described by the diagram. Such maps are
  called \emph{graph maps}; write $\graph{v}{w}$ for the set of graph
  maps $Q_v \to Q_w$.
\end{definition}

\begin{example}
Consider the algebra in the Running Example.  Let 
\[
v = (dc,0,1)(b,1,2)(a,2,3) \hbox{ and  } w = (e,2,1)(f,1,0)(c,0,1)(b,1,2)(af,2,3).
\]  
The unfolded diagram on the left gives rise to the graph map on the
right.
\[ 
\xymatrix@R=1.5mm{
& & \xydot \ar[dd]^d \ar[r]^{dc}& \xydot \ar@{=}[dd] \ar[r]^b& \xydot \ar@{=}[dd] \ar[r]^a& \xydot \ar[dd]^f \\
&&&&& \\
\xydot & \xydot \ar[l]^e & \xydot \ar[l]^f \ar[r]_c & \xydot \ar[r]_b & \xydot \ar[r]_{af} & \xydot }
\quad
\xymatrix@R=5mm@C=7mm{  \smxy{P(1)} \ar[d]^d \ar[r]^{dc} & \smxy{P(4)} \ar[d]^{\left( \begin{smallmatrix} 1 \\ 0 \end{smallmatrix}\right) } 
\ar[r]^b & \smxy{P(5)} \ar[d]^{\left(\begin{smallmatrix} 1 \\ 0 \end{smallmatrix}\right)} \ar[r]^a  & \smxy{P(3)} \ar[d]^f \\
\smxy{P(3)} \ar[r]_-{\left(\begin{smallmatrix} c \\ f \end{smallmatrix}\right)} & \smxy{P(4)\oplus P(2)} \ar[r]_-{\left(\begin{smallmatrix} b & 0 \\0 & e \end{smallmatrix}\right)} & \smxy{P(5) \oplus P(1)} \ar[r]_-{\left(\begin{smallmatrix} af & 0 \end{smallmatrix}\right)} & \smxy{P(2)}
}
\]
\end{example}

\begin{definition}\label{def:quasigraph}
  If none of the conditions \LGone, \LGtwo, \RGone\ nor \RGtwo\ hold,
  then the diagram no longer induces a map, however we shall say that
  there is a \emph{quasi-graph map} from $Q_v$ to $Q_w$.
\end{definition}

\begin{definition}
  Consider the diagram representing a quasi-graph map $Q_v \to
  \Sigma^{-1}Q_w$:

 \[ \xymatrix@R=2mm{ Q_v  \colon     &  \ar@{.}[r] & \xydot  \ar@{.}[r]^{v_L} & \xydot  \ar@{=}[d] \ar@{-}[r]^{u_p} & \xydot \ar@{=}[d] \ar@{-}[r]^{u_{p-1}} & \cdots \ar@{-}[r]^{u_2}  & \xydot \ar@{=}[d] \ar@{-}[r]^{u_1} & \xydot \ar@{=}[d] \ar@{.}[r]^{v_R} & \xydot  \ar@{.}[r] &  \\
\Sigma^{-1} Q_w \colon      &  \ar@{.}[r] & \xydot \ar@{.}[r]_{w_L}                  & \xydot \ar@{-}[r]_{u_p}                             & \xydot \ar@{-}[r]_{u_{p-1}}            & \cdots \ar@{-}[r]_{u_2}  & \xydot \ar@{-}[r]_{u_1}            & \xydot \ar@{.}[r]_{w_R}             & \xydot \ar@{.}[r]                  &  \\
}
\] 

\noindent Then there exist $p$ single maps $Q_v \to Q_w$ given by the
paths $u_p, u_{p-1}, ... , u_1$ in the appropriate degrees.  There are
also two (single or double) maps with non-zero components given by
$v_L$, $w_L$, $v_R$ or $w_R$ in the appropriate degree.  For example,
if $v_L$ is direct and $w_L$ is inverse than there is a double map
$(v_L,w_L)$.  We call these maps the associated \emph{quasi-graph map
  representatives}.  Let $\quasi{v}{w}$ be a fixed set of quasi-graph
map representatives $Q_v \to Q_w$, one for each quasi-graph map from
$Q_v$ to $\Sigma^{-1} Q_w$.
\end{definition}

\begin{example}\label{ex: homotopy class}
Let us return to the Running Example. Consider the homotopy strings 
\begin{align*}
v & = (f,0,1)(e,1,2)(dc,2,3)(b,3,4)(a,4,5)(d,5,4), \ \text{and} \\ 
w & = (c,0,-1)(f,-1,0)(e,0,1)(dc,1,2)(b,2,3)(a,3,4).
\end{align*}
The following diagram describes a quasi-graph map $Q_v \to \Sigma^{-1} Q_w$:
\[ 
\xymatrix@R=2mm{ 
Q_v  \colon     &    & \xydot \ar@{=}[d] \ar[r]^-{f} & \xydot \ar@{=}[d] \ar[r]^{e} & \xydot \ar@{=}[d] \ar[r]^{dc} & \xydot \ar[r]^{b} \ar@{=}[d] & \xydot \ar@{=}[d] \ar[r]^{a} & \xydot \ar@{=}[d] \ar@{<-}[r]^{d} & \xydot \,  \\
\Sigma^{-1}Q_w \colon      &  \xydot \ar@{<-}[r]_-{c} & \xydot \ar[r]_-{f}  & \xydot \ar[r]_{e}                             & \xydot \ar[r]_{dc}            & \xydot \ar[r]_{b}  & \xydot \ar[r]_{a}            & \xydot         &  
}
\] 
and the associated quasi-graph map representatives $Q_v \to Q_w$ are 
\[ 
\{ (-c),(f),(-e),(dc),(-b),(a),(-d) \}. 
\]
\end{example}

\begin{remarks}
We highlight the following.
\begin{enumerate}
\item \label{rem:infinitegraphmaps} (Quasi-)graph maps extend to
  infinite homotopy strings in the obvious way; if a left
  (respectively right) endpoint condition holds and the substrings to
  the right (respectively left) are equal and infinite then we can
  define a (quasi-)graph map with infinitely many components which are
  isomorphisms. Cf. \cite{C-B2} for infinite graph maps between
  modules.

\item To standardise what we mean by a (quasi-)graph map we take the
  following conventions. If $v$ and $w$ are both homotopy strings,
  then each isomorphism is an identity.  If one of $v$ and $w$ is a
  homotopy band, then the leftmost isomorphism in the diagram is an
  identity and the remaining isomorphisms will be determined by this.
  Note that any other choice of isomorphisms will result in a scalar
  multiple of such a standardised map.

\item It is necessary to replace $w$ by an equivalent homotopy
  string/band to obtain all of the possible (quasi)-graph maps $Q_v
  \to Q_w$; cf. Remark \ref{rem:alldouble}.

\item \label{item:pathology} There is one pathological example arising
  from the identity map $B_{w,\lambda,1} \to B_{w, \lambda,1}$. This
  is a graph map since it `travels' the whole way around the band. In
  particular, there are no real endpoint conditions. As such it also
  defines a quasi-graph map $B_{w,\lambda,1} \to \Sigma^{-1}
  B_{w,\lambda,1}$. This situation will be treated in more detail in
  Section~\ref{sec:bands}.
\end{enumerate}
\end{remarks}

\subsection{The main theorem}

We have now assembled all the maps that occur in a canonical basis of
$\Hom_{\sD}(Q_v,Q_w)$ and can state our main result succinctly as the
following.

\begin{theorem} \label{thm:main} The set $\basisD{v}{w} \coloneqq
  \singleone{v}{w} \cup \doubleone{v}{w} \cup \graph{v}{w} \cup
  \quasi{v}{w}$ is a $\kk$-linear basis for $\Hom_\sD(Q_v,Q_w)$.
\end{theorem}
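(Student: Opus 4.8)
The plan is to reduce everything to the category $\sC = \CL$ of right-bounded projective complexes, where the Hom-spaces are finite-dimensional in each degree and maps are honest matrices of paths, and then to work degree-by-degree. First I would establish that the four families $\singleone{v}{w}$, $\doubleone{v}{w}$, $\graph{v}{w}$ and $\quasi{v}{w}$ really do define elements of $\Hom_\sD(Q_v,Q_w)$: for the single, double and graph maps the defining conditions \Lone--\Rtwo, $(*)$, \LGone--\RGtwo\ are precisely the conditions forcing the relevant squares in $\sC$ to commute, so these are chain maps; for quasi-graph maps one checks that the prescribed representative (a single or double map, depending on the endpoint shapes) is a chain map, and separately that the $p$ single maps $u_p,\dots,u_1$ together with the endpoint map assemble into a null-homotopy of the corresponding identity-type chain map $Q_v\to Q_w$ — so the quasi-graph representative is well-defined up to homotopy. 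This is mostly bookkeeping with the conventions of Section~\ref{sec:morphisms}, using Proposition~\ref{prop: paths} and Remark~\ref{Rmk: transpose} to translate path-composition into matrix multiplication.

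Next I would prove linear independence. Since each basis element has a controlled support (a single component, two components in consecutive degrees, or a chain of isomorphism-components along an overlap), one argues that a nontrivial $\kk$-linear combination cannot be a coboundary: choose the component sitting in an extremal position (leftmost or rightmost degree appearing in the combination, or a position where some coefficient is forced nonzero) and observe that, because of the endpoint conditions built into each definition, this component cannot be cancelled by $d_wf' + f'd_v$ for any $f'\in\prod_t\Hom(Q_v^t,Q_w^{t-1})$ — the homotopy terms always either pass through a relation or land in a different summand. The quasi-graph representatives need slightly more care: one shows that the span of $\quasi{v}{w}$ meets the span of $\singleone{v}{w}\cup\doubleone{v}{w}\cup\graph{v}{w}$ only in coboundaries, using that a quasi-graph map comes from an overlap with $\Sigma^{-1}Q_w$, not $Q_w$, so its representative's components sit in degrees incompatible with genuine graph/single/double maps except up to the explicit homotopy.

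The real work is the spanning statement. Here I would take an arbitrary chain map $f\in\Hom_\sC(Q_v,Q_w)$, expand it into components, and run an induction that peels off basis elements one component at a time. Pick, say, the component of $f$ in the lowest degree $t$; the cocycle condition $d_w f = f d_v$ constrains how this component interacts with the homotopy letters $v_L,v_R$ of $v$ and $w_L,w_R$ of $w$ adjacent to it. A careful case analysis on the directness/inverseness of these four homotopy letters and on whether various composites land in $I$ shows that this component (possibly after subtracting a coboundary $d_wf' + f'd_v$, which is the mechanism producing quasi-graph contributions) must be the initial component of a single map, a double map, a graph map, or a quasi-graph representative in $\basisD{v}{w}$; subtracting that basis element and continuing, the remaining chain map has strictly smaller support, so the induction terminates. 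Finally one upgrades from $\Hom_\sC$ to $\Hom_\sD$ by recalling the identification of $\sD$ with $\KpL$ and noting that homotopies between string/band complexes are again matrices of paths, so the quotient by null-homotopic maps is exactly accounted for by collapsing each quasi-graph's $p+1$ representatives to a single homotopy class. The main obstacle I anticipate is this last case analysis in the spanning step: controlling \emph{simultaneously} the left and right endpoints of an overlap, and keeping track of which overlaps are "maximal" so that one does not double-count — in particular distinguishing when an overlap terminates because of a relation (graph map) versus because of a genuine endpoint of $v$ or $w$ (feeding into singleton or quasi-graph maps) — and ensuring the induction on support is genuinely well-founded when $v$ or $w$ is infinite. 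I would handle the infinite-string cases by a truncation argument, using Remark~(\ref{rem:infinitegraphmaps}) to see that the infinitely-many-isomorphism graph maps arise as limits of the finite picture.
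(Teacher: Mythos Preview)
Your overall plan is in the right spirit, but it conflates two stages that the paper separates, and in doing so misses a genuine class of maps. The paper does \emph{not} try to show directly that $\basisD{v}{w}$ spans $\Hom_\sD(Q_v,Q_w)$. Instead it first proves (Proposition~\ref{prop:basis}) that the larger set $\basisC{v}{w}=\graph{v}{w}\cup\single{v}{w}\cup\double{v}{w}$, containing \emph{all} single and double maps, is a basis for $\Hom_\sC(Q_v,Q_w)$; the key observation (Lemma~\ref{lemma: basis maps determined}) is that any such map is determined by a single nonzero component, which immediately gives both linear independence and the peeling-off spanning argument you describe. Only afterwards does it analyse homotopy classes among elements of $\basisC{v}{w}$: graph maps lie in singleton classes (Lemma~\ref{lem:homotopy-graph}), and for a single or double map $f$ with $\homotopy{f}\neq\{f\}$ an explicit algorithm (Proposition~\ref{prop:homotopy-classes}) shows that $\homotopy{f}$ either traces out a quasi-graph map or hits one of three null-homotopy conditions \None--\Nthree; Lemma~\ref{lem:doubles} pins down exactly which double maps are singleton.

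The gap in your outline is the sentence ``the quotient by null-homotopic maps is exactly accounted for by collapsing each quasi-graph's $p+1$ representatives to a single homotopy class.'' This is false: there are single and double maps in $\basisC{v}{w}$ that are null-homotopic without being quasi-graph representatives --- precisely those whose would-be quasi-graph overlap satisfies one of the graph-map endpoint conditions (see the remark after Proposition~\ref{prop:homotopy-classes}) --- and your induction would peel one of these off without ever recognising it as a coboundary. Relatedly, your spanning step tries to subtract elements of $\basisD{v}{w}$ directly from an arbitrary chain map ``possibly after subtracting a coboundary''; but identifying \emph{which} coboundary to subtract, and verifying that what remains is one of the four distinguished types, is exactly the content of the algorithmic Proposition~\ref{prop:homotopy-classes}, which you have not outlined. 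The two-step structure (basis of $\Hom_\sC$ first, homotopy analysis second) is what makes this tractable.
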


The next section concerns the proof of this result.

\begin{example} Consider the following homotopy strings over the
  Running Example:
\begin{align*}
  v &= (a,-1,0)(c,0,1)(b,1,2)\\
  w &= (e,2,3)(d,3,4)(a,4,3)(b,3,2)(cd,2,1)(e,1,0)(f,0,-1)(c,-1,0)(b,0,1)(a,1,2)
\end{align*}
The set $\basisD{v}{w}$ has two elements.  The first is a singleton
single map $(af)$ in degree $2$ from the right end-point of $v$ to the
left end-point of $w$.  The second is given by the
following quasi-graph map $Q_v \rightarrow \Sigma^{-1}Q_w$, where the
associated quasi-graph map representatives $Q_v \rightarrow Q_w$ are
indicated by dashed arrows (single maps) and dotted arrows (double map):
\[
\xymatrix@R=4mm{
  & & & & & & \xydot \ar[r]^-a \ar@{..>}[dr] \ar@{}[d]|{(a,f)} & \xydot \ar@{=}[d]
  \ar@{..>}[dl] \ar[r]^-c
  \ar@{-->}[dr]^-c & \xydot \ar@{=}[d] \ar[r]^-b \ar@{-->}[dr]^-b&
  \xydot \ar@{=}[d] \ar@{-->}[dr]^-a& \\
  \xydot \ar[r]_-e & \xydot \ar[r]_-d & \xydot & \xydot \ar[l]^-a
  & \xydot \ar[l]^-b & \xydot \ar[l]^-{cd} & \xydot \ar[l]^-e
  & \xydot \ar[l]^-f \ar[r]_-c & \xydot \ar[r]_-b & \xydot \ar[r]_-a & \xydot
}
\]
\end{example}

%============================================================================
% Proof of main results
\section{Proof of the main theorem} \label{sec:proof}
%============================================================================

Let $v,w \in \strings \sqcup \bands$ and $Q_v$, $Q_w$ be as in the
previous section.  We shall split up the proof of
Theorem~\ref{thm:main} into two parts. The first part establishes a
canonical basis for $\Hom_{\sC}(Q_v,Q_w)$. In the second part, we
identify which elements of this basis are homotopic or null-homotopic.

\subsection{A basis at the level of complexes}

In this section, we establish the following:

\begin{proposition}\label{prop:basis}
  The set $\basisC{v}{w} \coloneqq \graph{v}{w} \cup \single{v}{w}
  \cup \double{v}{w}$ is a $\kk$-linear basis for
  $\Hom_{\sC}(Q_v,Q_w)$.
\end{proposition}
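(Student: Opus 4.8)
The plan is to give an explicit parametrisation of $\Hom_{\sC}(Q_v,Q_w)$ and check that $\basisC{v}{w}$ hits it bijectively. Recall that a morphism of complexes $f\colon Q_v\to Q_w$ is a collection of $\Lambda$-linear maps $f^t\colon Q_v^t\to Q_w^t$ commuting with the differentials, and by Proposition~\ref{prop: paths} (and Convention~\ref{con:maps}) each such $f^t$ is, in the chosen bases of indecomposable projective summands, a matrix whose entries are $\kk$-linear combinations of paths in $\Gamma$. Thus $\Hom_{\sC}(Q_v,Q_w)$ is a $\kk$-vector space with a natural spanning set indexed by \emph{components}: pairs consisting of a summand $P(\phi_v(r))$ of $Q_v^t$, a summand $P(\phi_w(r'))$ of $Q_w^t$, and a single path $p\colon \phi_w(r')\path \phi_v(r)$. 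First I would record this and reduce the problem to a purely local/combinatorial one: decompose an arbitrary chain map as a sum of its components and analyse, for each component $p$ placed at a given position, what the commutativity constraints $d_w^t\circ f^t = f^{t+1}\circ d_v^t$ force on the neighbouring components. Because $\Lambda$ is gentle, at each vertex of an unfolded diagram there are at most two homotopy letters on each side, so the commuting squares involve a bounded amount of data; the constraint at a square is an equation of the form (sum of two or fewer paths) $=$ (sum of two or fewer paths) in $\kk\Gamma/I$, and gentleness makes the "collision" analysis finite.

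The core of the argument is then a classification: starting from a nonzero component $p$ of $f$ and propagating the commutativity relations left and right along the unfolded diagrams, I claim the "connected cluster" of forced nonzero components must be exactly one of the shapes defining a single map (Definition~\ref{def: single}), a double map (Definition~\ref{def: double}), or a graph map (Definition~\ref{def:graphmaps}); the endpoint conditions \Lone–\Rtwo, \LGone–\RGtwo\ are precisely the residual constraints that survive once the propagation terminates (either by reaching an end of a complex or by hitting a square where no further component is forced). Concretely: if the square adjacent to $p$ on one side already commutes with no new component (because one of the relevant letters is "inverse/zero" or because $v_L p=0$ etc.), the cluster terminates there as in a single map; if a new component $p'$ is forced with $p'$ and $p$ not composable into the relevant letters, one gets the two-component configuration of a double map; and if the forced relation is "$=$ identity/isomorphism" along a maximal common substring of $v$ and $w$, one is propagating identities and obtains a graph map, the endpoint conditions \LG/\RG\ being exactly the compatibility at the two ends of the overlap. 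Running Example computations (Examples~\ref{ex:single}, \ref{ex:double}) are the template. This step must be done carefully enough to show (i) every chain map is a $\kk$-linear combination of maps of these three types (spanning), and (ii) the maps of these three types, as configured, are $\kk$-linearly independent — independence is essentially immediate since distinct basis maps have distinct "support" (the positions/paths of their nonzero components are pairwise distinct, or, for the rare coincidences, the diagrams themselves witness linear independence via the isomorphism normalisation in the Remarks following Definition~\ref{def:quasigraph}).

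The main obstacle I anticipate is the propagation/classification step: one must enumerate, using gentleness, all the ways a commuting square can be satisfied — in particular the interplay between "passing through a relation" and "having a zero entry" (as flagged in the Running Example) — and show that no exotic connected configuration of components beyond the three listed types can occur, and that every legal configuration of each type does give a genuine chain map. A secondary subtlety is bookkeeping at the ends of the (possibly infinite) complexes and the convention that $v_L,v_R,w_L,w_R$ may be "zero"; for infinite homotopy strings one also needs the graph-map clusters with infinitely many isomorphism-components to be well-defined chain maps, as noted in Remark~\ref{rem:infinitegraphmaps}. Once the local analysis is complete, assembling it into a global statement — every $f\in\Hom_{\sC}(Q_v,Q_w)$ decomposes uniquely as a (finite, by boundedness of the relevant cohomology and the shape of the complexes) sum of elements of $\graph{v}{w}\cup\single{v}{w}\cup\double{v}{w}$ — is formal, and yields Proposition~\ref{prop:basis}.
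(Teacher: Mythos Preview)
Your proposal is correct and follows essentially the same approach as the paper: start from a nonzero component, propagate the commutativity constraints along the unfolded diagrams using gentleness, and observe that the resulting connected configuration is uniquely determined and must be a single, double, or graph map. The paper crystallises your ``propagation/classification'' step into two short lemmas --- that a non-stationary path component can have a nonzero commuting square on at most one side (Lemma~\ref{lemma: zero squares}), and hence that any nonzero component determines the entire basis map (Lemma~\ref{lemma: basis maps determined}) --- from which both spanning and linear independence follow immediately; this is exactly the ``key observation'' you anticipated needing for the enumeration of commuting squares.
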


The proof of Proposition~\ref{prop:basis} is inspired by \cite[Section
1.4]{C-B2}. We first need two technical lemmas from which we will
deduce that $\basisC{v}{w}$ is a linearly independent set.

\begin{lemma}\label{lemma: zero squares}
Suppose we have the following situation:
\[ 
\xymatrix@R=5mm{
\ar@{.}[r] & \xydot \ar[d]_{f_L} \ar@{-}[r]^{v_L} & \xydot \ar@{}[dl]|{(*)} \ar[d]_{f_C} \ar@{-}[r]^{v_R} & \xydot \ar@{}[dl]|{(**)} \ar[d]^{f_R} \ar@{.}[r]& \\
\ar@{.}[r] & \xydot \ar@{-}[r]_{w_L} & \xydot \ar@{-}[r]_{w_R} & \xydot \ar@{.}[r]&
}
\] 
where $f_C$ is a non-stationary path and $v_L$ is direct if and only
$w_L$ is direct; similarly for $v_R$ and $w_R$.  If the squares $(*)$
and $(**)$ commute, then at most one of $(*)$ and $(**)$ has a
non-zero commutativity relation.
\end{lemma}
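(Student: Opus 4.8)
The plan is to analyze the two commutativity relations directly in terms of paths in the quiver $\Gamma$, exploiting the defining properties of a gentle algebra. Write $f_L, f_C, f_R$ as (linear combinations of) paths; since $f_C$ is a non-stationary path, say $f_C \colon x \path y$, the square $(*)$ relates $f_L$ composed with $v_L$ (or $w_L$) to $f_C$ composed with $w_L$ (or $v_L$), depending on orientation, and similarly $(**)$ relates $f_R, f_C$ and $v_R, w_R$. The \emph{commutativity relation} of a square is the difference of the two composite paths; it is non-zero precisely when that difference does not vanish in $\kk\Gamma/I$, i.e. when exactly one of the two composites passes through a relation in $I$ (or is zero for a degree reason) while the other does not.

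First I would set up notation carefully: using Remark~\ref{Rmk: transpose} and Convention~\ref{con:maps}, translate the commuting squares $(*)$ and $(**)$ into equations between paths, being attentive to the orientation cases (direct/inverse) for $v_L, v_R$ — the hypothesis that $v_L$ is direct iff $w_L$ is direct (similarly on the right) ensures the two squares are "genuine" commuting squares rather than the degenerate situation of a graph-map endpoint. In each square, the path $f_C$ appears, so $(*)$ involves a composite of the form $f_C \cdot (\text{arrow of } v_L \text{ or } w_L)$ at one of its endpoints, and $(**)$ involves $(\text{arrow of } v_R \text{ or } w_R) \cdot f_C$ at the other endpoint (up to which side depending on direct vs. inverse). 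The key point is that the relevant arrows sit at the two ends of the single path $f_C$.

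The core of the argument: suppose, for contradiction, that both $(*)$ and $(**)$ have non-zero commutativity relations. Then in $(*)$ one of the two composite paths lies in $I$; since $f_C \notin I$ and the extra factor is (essentially) a single arrow or short path, gentleness condition (2) or (3) pins down which arrow must compose with the first/last arrow of $f_C$ to land in $I$. The same analysis applied to $(**)$ at the \emph{other} end of $f_C$ forces analogous constraints there. I would then combine these with the fact that $f_L$ and $f_R$ are non-stationary paths: the constraints from $(*)$ and $(**)$ determine the first arrow of $f_L$ (or the composite $f_L v_L$) and the last arrow of $f_R$, and matching these against the structure of $Q_v$ and $Q_w$ — in particular the requirement that $f_L, f_C, f_R$ be components of an actual chain map, together with the homotopy-string conditions (1)--(4) — yields a contradiction, typically by producing a composite of arrows that violates one of the gentle conditions (at most one arrow $b$ with $ab \in I$, at most one with $ab \notin I$) or by forcing $f_L$ or $f_R$ to be stationary.

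The main obstacle I expect is the bookkeeping across the orientation cases: there are essentially four sign/direction configurations for $(v_L, w_L)$ and four for $(v_R, w_R)$, but the hypothesis "$v_L$ direct iff $w_L$ direct" collapses these to two genuine cases per side, and the two sides are handled symmetrically (by an obvious left-right dual), so in practice one reduces to checking one or two representative cases in detail. The subtle part within each case is verifying that "the commutativity relation is non-zero" really does force a subpath in $I$ at a specific spot — one must rule out the alternative that the relation is non-zero merely because one composite is zero as a path (endpoints don't match up), which is excluded because both squares are assumed to commute, so the two composites in each square are \emph{equal}, hence if one is a genuine nonzero path so is the shape of the other, and the discrepancy can only come from $I$. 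Once that is nailed down, the gentleness conditions (2) and (3) do the rest almost mechanically.
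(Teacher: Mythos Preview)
Your proposal rests on a misreading of the phrase ``non-zero commutativity relation''. You interpret it as the \emph{difference} of the two composites being non-zero, and then try to argue that exactly one composite lies in $I$. But the squares $(*)$ and $(**)$ are assumed to \emph{commute}: the two composites in each square are equal, and ``non-zero commutativity relation'' simply means that this common value is a non-zero element of $\Lambda$. So there is no ``discrepancy coming from $I$'' to exploit, and the contradiction you are aiming for never gets off the ground. Your last paragraph in fact notices that the composites are equal, but doesn't resolve the tension with your earlier definition.

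Once the statement is read correctly, the paper's argument is much shorter and uses ingredients you do not mention. Take the case where all four letters are direct (the others are analogous). If $(*)$ has non-zero commutativity relation, then $v_L f_C = f_L w_L \neq 0$ as paths in $\Lambda$; reading this single path from both decompositions forces $f_C$ and $w_L$ to start with the same arrow. Now the homotopy string axiom gives $w_L w_R = 0$, and since $I$ is generated by paths of length~$2$ (gentleness condition~(4), not~(2) or~(3)), the length-$2$ relation at the junction of $w_L$ and $w_R$ involves only the first arrow of $w_L$. As $f_C$ shares that arrow, $f_C w_R = 0$ as well, whence $v_R f_R = f_C w_R = 0$ and $(**)$ has zero commutativity relation. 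The two key steps you are missing are (i) the homotopy string condition $w_L w_R = 0$, which you never invoke, and (ii) the observation that the path equality $v_L f_C = f_L w_L$ pins down the first arrow of $f_C$.
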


\begin{proof}
  We analyse the case when all four homotopy letters $v_L$, $v_R$,
  $w_L$ and $w_R$ are direct; the remaining cases are analogous.

  Suppose $(*)$ has a non-zero commutativity relation.  Then $v_L f_C
  = f_Lw_L \neq 0$.  It follows that the paths $f_C$ and $w_L$ start
  with in the same arrow.  By the definitions of homotopy strings and
  bands, $w_Lw_R = 0$ and, by condition \ref{item:length2} of
  gentleness, we must also have that $f_Cw_R=0$.  That is,
  $f_Cw_R=v_Rf_R=0$ and $(**)$ has a zero commutativity relation.

  Dually, whenever $(**)$ has a non-zero commutativity relation, $(*)$
  has a zero commutativity relation.
\end{proof}

\begin{lemma}\label{lemma: basis maps determined}
  The unfolded diagram giving rise to a graph, single or double map is
  completely determined by any non-zero component.
\end{lemma}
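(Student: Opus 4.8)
The plan is to argue that once we fix a single nonzero component — a path $p\colon x\path y$ of $f^t$ between an indecomposable summand $P(x)$ of $Q_v^t$ and an indecomposable summand $P(y)$ of $Q_w^t$ — all of the combinatorial data that specifies a graph, single or double map is forced. Recall from Section~\ref{sec:strings} that a summand $P(x)$ in degree $t$ of a string or band complex corresponds to a \emph{specific vertex} of the unfolded diagram, i.e. to a specific position $r$ with $\phi_w(r)=x$ and with the adjacent homotopy letters $w_L$, $w_R$ around that vertex completely determined. So a chosen component of $f$ pins down the position in the unfolded diagram of $Q_v$ (hence $v_L,v_R$ and the degrees) and the position in the unfolded diagram of $Q_w$ (hence $w_L,w_R$), together with the path $p$ joining them. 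The claim is that this local picture, together with the definitions, determines the \emph{entire} diagram.

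First I would dispose of the single-map case: by Definition~\ref{def: single} a single map has \emph{only one} nonzero component, so there is nothing further to determine once that component and its position are fixed. Next, the double-map case: by Definition~\ref{def: double} a double map has exactly two nonzero components $f_L,f_R$ in consecutive degrees, linked by the commuting square $(*)$ in diagram~\eqref{eq:double}. Given one of the two components, say $f_L$, its position fixes the vertex of $Q_v$ and of $Q_w$ it connects, hence fixes $u_v$ (the homotopy letter of $Q_v$ leaving that vertex toward the next degree) and $u_w$; then the commutativity $u_w f_L = f_R u_v$ (read as paths, à la Remark~\ref{Rmk: transpose}), combined with Proposition~\ref{prop: paths} which says a basis element of a $\Hom$-space between projectives is a \emph{single} path, forces $f_R$ to be uniquely determined — it is the unique path making the square commute, and Lemma~\ref{lemma: zero squares} guarantees we are not in a degenerate situation where two different completions could occur. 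Symmetrically if the given component is $f_R$.

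The main work, and the step I expect to be the real obstacle, is the graph-map case, since a graph map has a long run of components (the isomorphisms along the common substring $u_p,\dots,u_1$, plus the endpoint components $f_L$ and $f_R$). Here the strategy is: a chosen nonzero component sits somewhere along this run. If it is one of the ``spine'' isomorphisms, its position fixes a vertex in each of $Q_v$ and $Q_w$ at which the outgoing/incoming homotopy letters must \emph{agree} (that is precisely what the double lines in the diagram mean, $u_i$ the same in both rows); since in a gentle algebra the neighbours of a vertex in a string/band are uniquely determined, one can propagate the identification outward step by step — at each step the next homotopy letter $u_{i\pm1}$ of $Q_v$ must equal that of $Q_w$, and the endpoint conditions \LGone/\LGtwo/\RGone/\RGtwo\ together with gentleness leave no choice about where and how the common substring terminates and what $f_L,f_R$ are (using Proposition~\ref{prop: paths} again to see the endpoint paths are unique). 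If instead the chosen component is an endpoint component $f_L$ or $f_R$, one runs the same propagation starting from the adjacent spine vertex. The only subtlety to check carefully is that this propagation cannot branch — i.e. that at a vertex of valency two in the unfolded diagram the ``matching'' of the two letters on each side is unambiguous; this is exactly where gentleness conditions (1)--(3) of Section~\ref{sec:gentle} are used, and where one invokes that a homotopy string/band has no repeated configuration forcing a choice. Once non-branching is established, uniqueness of the whole diagram — hence of the map, since by Definition~\ref{def:graphmaps} the nonzero components of a graph map are \emph{exactly} those in its diagram — follows immediately, completing the proof.
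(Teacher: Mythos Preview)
Your plan is correct and follows essentially the same propagation idea as the paper's proof, but the organisation differs in a way worth noting. The paper does \emph{not} treat single, double and graph maps as three separate cases of increasing difficulty; instead it gives a single local argument --- given any nonzero component $f_C$, determine the adjacent component $f_L$ via the commuting square --- that applies uniformly, and declares the single and double cases to be analogous to the graph case rather than easier. Concretely: if $f_C$ is an isomorphism and $v_L,w_L$ have the same orientation, the commuting square forces $f_L$ to be the unique path (possibly stationary) with $f_L w_L = v_L f_C$ or $v_L f_L = f_C w_L$; if $f_C$ is a non-stationary path, Lemma~\ref{lemma: zero squares} says at most one adjacent square can commute nontrivially, so either $f_L=0$ (endpoint reached) or $f_L$ is an isomorphism (interior). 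This single dichotomy replaces your separate ``spine vs.\ endpoint'' discussion.

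Two small points in your write-up to tighten. First, your invocation of Lemma~\ref{lemma: zero squares} in the double-map case is misplaced: uniqueness of $f_R$ given $f_L$ comes straight from commutativity and Proposition~\ref{prop: paths}, not from that lemma; where Lemma~\ref{lemma: zero squares} is genuinely needed is in the graph-map case, to certify that once a non-isomorphism component appears you are at an endpoint and propagation stops. Second, your graph-map propagation is phrased as ``the adjacent homotopy letters must agree'', but the cleaner statement (and the one the paper uses) is that the adjacent \emph{component} $f_L$ is forced by the square --- whether or not the letters agree then falls out as a consequence rather than being the driving mechanism.
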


\begin{proof}
  We give a proof for the case where we have a non-zero component of a
  graph map.  The arguments for single and double maps are analogous.

  We make use of the diagram from Lemma \ref{lemma: zero squares}.
  Suppose that $f_C$ is a non-zero component of a graph map.  We show
  that, if it exists, $f_L$ is completely determined. Note that if
  $v_L$ and $w_L$ have different orientations then the arrow $f_L$
  does not make sense since the degrees will not match.  So suppose
  $v_L$ and $w_L$ are either both direct or both inverse.

  Suppose $f_C$ is an isomorphism and $v_L$ and $w_L$ are both direct.
  Then the square $(*)$ must commute so $f_L$ is the unique path such
  that $f_Lw_L = v_Lf_C$ (if the path is stationary then $f_L$ is an
  isomorphism).  Similarly, when $v_L$ and $w_L$ are both inverse then
  $f_L$ is the unique path such that $v_Lf_L = f_Cw_L$.

  Suppose $f_C$ is given by a path, then according to Lemma
  \ref{lemma: zero squares} we must be at the left or right endpoint
  of the diagram.  If $v_L$ is direct and $v_Lf_C = 0$, then we must
  be at the left end of the diagram and $f_L=0$.  If $v_Lf_C\neq0$,
  then we are are at the right end of the diagram and $f_L$ is an
  isomorphism.  Similarly, when $w_L$ is inverse and $f_Cw_L = 0$,
  then $f_L=0$.  If $f_Cw_L \neq 0$, then $f_L$ is an isomorphism.

  We can apply dual arguments to conclude that the diagam is also
  completely determined to the right (i.e. $f_C$ determines $f_R$).
\end{proof}

\begin{corollary}
  The set $\basisC{v}{w} = \graph{v}{w} \cup \single{v}{w} \cup
  \double{v}{w}$ is linearly independent.
\end{corollary}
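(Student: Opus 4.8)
\emph{Proof of the Corollary (plan).}
The plan is to deduce linear independence directly from Lemma~\ref{lemma: basis maps determined}, together with the fact (Proposition~\ref{prop: paths} and Convention~\ref{con:maps}) that the $\Hom$-space between any two indecomposable projective $\Lambda$-modules has the paths of $\Gamma$ as a $\kk$-basis. Suppose, for contradiction, that there were a non-trivial relation $\sum_{i} \lambda_i f_i = 0$ among finitely many pairwise distinct maps $f_i \in \basisC{v}{w}$, and fix an index $i_0$ with $\lambda_{i_0} \neq 0$. Every graph, single or double map is a non-zero morphism and hence has at least one non-zero component; choose one for $f_{i_0}$. Concretely this means: a degree $t$, a summand $P(x)$ of $Q_v^t$, a summand $P(y)$ of $Q_w^t$, a path $p$ in $\Gamma$ and a scalar $c_0 \in \kk^*$ such that the term $c_0 p$ appears in the corresponding entry of the matrix $f_{i_0}^t$ (with matrices written as in Remark~\ref{Rmk: transpose}).

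First I would read off this same matrix entry from the identity $\sum_i \lambda_i f_i^t = 0$. The entry lies in the $\Hom$-space between the relevant indecomposable projectives, and each $f_i^t$ contributes a $\kk$-linear combination of paths there; expanding in the path basis and extracting the coefficient of $p$ yields
\[
\sum_{i \in J} \lambda_i c_i = 0,
\]
where $J$ is the set of those $i$ for which $p$ occurs, with non-zero coefficient $c_i$, in that entry of $f_i^t$. In particular $i_0 \in J$ and $c_{i_0} = c_0$.

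The heart of the matter is that $J = \{i_0\}$. For each $i \in J$, the occurrence of $c_i p$ in that matrix entry is precisely a non-zero component of $f_i$ in the sense of Section~\ref{sec:morphisms}, so Lemma~\ref{lemma: basis maps determined} applies and shows that the entire unfolded diagram of $f_i$ is reconstructed from this single component. Since $f_{i_0}$ has the identical component at the identical position, $f_i$ and $f_{i_0}$ arise from one and the same unfolded diagram; as distinct elements of $\basisC{v}{w}$ give rise to distinct unfolded diagrams (with the standardisation conventions of Section~\ref{sec:morphisms} in force), this forces $f_i = f_{i_0}$, i.e. $i = i_0$. The displayed relation therefore collapses to $\lambda_{i_0} c_0 = 0$, contradicting $\lambda_{i_0} \neq 0$ and $c_0 \neq 0$; hence $\basisC{v}{w}$ is linearly independent.

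The step I expect to be the main obstacle is the last one: being precise that a non-zero component determines not merely a unique unfolded diagram but a unique element of $\basisC{v}{w}$. This requires checking that the three classes $\single{v}{w}$, $\double{v}{w}$ and $\graph{v}{w}$ are genuinely disjoint once degenerate overlaps are resolved (for instance, a graph map with empty overlapping region being regarded as a double map), and that within each class the assignment ``diagram $\mapsto$ map'' is injective once one fixes the standardisation conventions of Section~\ref{sec:morphisms} — normalising the isomorphism components of graph maps and the scalars attached to band complexes, so that in particular the pathological identity graph map on a band complex is counted only once. Granting this bookkeeping, which is implicit in the definitions of Section~\ref{sec:morphisms}, the argument above goes through.
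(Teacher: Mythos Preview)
Your proof is correct and follows essentially the same approach as the paper's: pick a non-zero component of one basis map, use that paths form a basis of $\Hom$-spaces between indecomposable projectives (the paper phrases this as ``the algebra is gentle and so only has zero relations''), deduce that some other basis map in the relation must share that component, and then invoke Lemma~\ref{lemma: basis maps determined} to conclude the two maps are equal. The only difference is cosmetic: the paper shows that no $b\in\basisC{v}{w}$ is a linear combination of the others, whereas you argue by contradiction from a vanishing linear combination; the bookkeeping concern you flag (that a component determines not just a diagram but a unique element of $\basisC{v}{w}$) is indeed implicit in the paper's definitions and its proof glosses over it just as you do.
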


\begin{proof}
  Let $0 \neq b \in \basisC{v}{w}$. We show that $b$ cannot be written
  as a linear combination of other elements of $\basisC{v}{w}$.
  Suppose $b = \sum_{i=1}^n k_ib_i$ for pairwise different
  $b_1,...,b_n \in \basisC{v}{w}$ and $k_i \in \kk$.  Let $f_C$ be a
  non-zero component of $b$.  Then some $b_i$, $1 \leq i \leq n$, must
  also have this non-zero component because the algebra is gentle and
  so only has zero relations. By Lemma \ref{lemma: basis maps
    determined}, $b=b_i$ and so $k_i=1$ and $\sum_{j=1}^{i-1}k_jb_i +
  \sum_{j=i+1}^nk_jb_j = 0$ as required.
\end{proof}

\begin{proof}[Proof of Proposition \ref{prop:basis}]
  It suffices to show that $\basisC{v}{w}$ spans
  $\Hom_{\sC}(Q_v,Q_w)$.
  Let $0 \neq h \in \Hom_{\sC}(Q_v,Q_w)$.  Then there is some degree
  $t$ such that $h^t \colon Q_v^t \to Q_w^t$ has a non-zero component
  $h^t_{ab}\colon P(\phi_v(a)) \to P(\phi_w(b))$.  By the shape of
  homotopy strings and bands, $P(\phi_v(a))$ and $P(\phi_w(b))$ are
  each connected to at most two non-zero components of the
  differential.  We must therefore consider the unfolded diagrams (as
  in Lemma \ref{lemma: zero squares} but with $h_{ab}^t$ in place of
  $f_C$). Without loss of generality, assume that $h_{ab}^t$ is an
  isomorphism or a scalar multiple of a path.  By Lemma \ref{lemma:
    basis maps determined}, there is a unique (scalar multiple of an)
  element of $\basisC{v}{w}$ with this component and, by Lemma
  \ref{lemma: zero squares}, this must be a summand of $h$.  If this
  is not the whole of $h$, then we choose another non-zero component
  of $h$ and continue until we have found a complete decomposition of
  $h$.  Thus $h \in \Span \basisC{v}{w}$.
\end{proof}

Proposition~\ref{prop:basis} gives us canonical bases for the Hom
spaces between indecomposable complexes of $\sD$ considered as objects
of $\sC$. We next turn our attention to homotopy classes of these
maps. The following section highlights the strategy of our approach.

\subsection{The strategy for constructing homotopy
  classes}\label{sec:homotopy-general}

We first recall the general definition; we direct the reader to a
standard textbook on homological algebra, for example
\cite{Hilton-Stammbach, Weibel} for more information regarding
homotopies.

\begin{definition}
  Let $(\cxP, d^{\bullet})$ and $(\cxQ, \dd^{\bullet})$ be complexes
  in $\sC$.  Then maps $f,g \colon \cxP \to \cxQ$ are said to be
  \emph{homotopic}, written $f \simeq g$, if there are maps $h^i
  \colon P^i \to Q^{i-1}$ such that $f^i - g^i = d^i h^{i+1} + h^i
  \dd^{i-1}$ (where, as before, the maps are composed from left to
  right, see Remark \ref{Rmk: transpose}).  The family of maps
  $\{h^i\}$ is called a \emph{homotopy} from $f$ to $g$. If $g=0$ then
  $f$ is called \emph{null-homotopic}.
\end{definition}

Consider a map $f \in \basisC{v}{w}$ and let $p$ be a component of
$f$. We can write down an unfolded diagram representation of this
component as follows.
\[
\xymatrix@R=5mm{
Q_v \colon & \ar@{.}[r]  & \xydot \arr^-{v_L}  & \xydot \ar[d]^-{p} \arr^-{v_R}   & \xydot \ar@{.}[r]  &  \\
Q_w \colon & \ar@{.}[r]  & \xydot \arr_-{w_L} & \xydot \arr_-{w_R}               & \xydot \ar@{.}[r] & }
\]
Thus, the corresponding component of a homotopic map must be given by
\[
q = p + \big( \alpha v_L a + \beta v_R b + \gamma c w_L + \delta d w_R  \big), 
\]
for some scalars $\alpha, \beta, \gamma, \delta \in \kk$ and paths
$a,b,c,d$ in the quiver. If the composition of paths does not make
sense we take the corresponding scalar to be zero. For instance, if
$v_L$ is direct then $\alpha =0$.  Therefore, in order to construct
homotopies between maps in $\basisC{v}{w}$ it is enough to look at
ways to construct the path $p$ by `completing' differential
components.

\begin{definition} \label{def:homotopy-class} We denote by
  $\homotopy{f}$ the set of maps $f'$ such that $f \simeq f'$ and
  $f'=\lambda g$ for some $g \in \basisC{v}{w}$ and $\lambda \in
  \kk^*$.
\end{definition}

\begin{remark}\label{rem:homotopy-classes}
  The set $\homotopy{f}$ is not the same as the homotopy class of $f$,
  however, if $g \simeq f$ then the decomposition of $g$ into a linear
  combination of elements of $\basisC{v}{w}$ will consist of elements
  of $\homotopy{f} \cup \homotopy{0}$ only.  Hence, it suffices to
  determine the sets $\homotopy{f}$.  If $f$ is non-zero and
  $\homotopy{f}$ is a singleton set then we will say that $f$ belongs
  to a \emph{singleton homotopy class}.
\end{remark}

\subsection{Basic maps $f$ such that $\homotopy{f}$ is not
  singleton} \label{sec:not-singleton}

Here we start with $f \in \single{v}{w} \cup \double{v}{w}$; we shall
see in Section \ref{sec:singletons} that we do not need to consider $f
\in \graph{v}{w}$.  The reader may find it helpful to recall
Definitions~\ref{def:graphmaps} and \ref{def:quasigraph} and the
corresponding endpoint conditions.

\begin{proposition} \label{prop:homotopy-classes} Suppose $f \in
  \single{v}{w} \cup \double{v}{w}$ such that $\homotopy{f} \neq
  \{f\}$. Then $f$ is not null-homotopic if and only if the elements
  of $\homotopy{f}$ are in one-to-one correspondence with the
  representatives of a quasi-graph map $Q_v \to \Sigma^{-1} Q_w$.
  Moreover,
  \begin{enumerate}
  \item \label{item:left-double-end} $\homotopy{f}$ has a double map
    on the left if $v_L \neq 0$ is direct and $w_L\neq 0$ is inverse.
  \item \label{item:right-double-end} $\homotopy{f}$ has a double map
    on the right if $v_R \neq 0$ is inverse and $w_R \neq 0$ is
    direct.
  \item \label{item:single-end} Otherwise, $\homotopy{f}$ ends with a
    single map on the left or right.
  \end{enumerate}
\end{proposition}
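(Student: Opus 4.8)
The plan is to analyse the map $f$ component by component, using the homotopy formula $q = p + (\alpha v_L a + \beta v_R b + \gamma c w_L + \delta d w_R)$ from Section~\ref{sec:homotopy-general} to see precisely when a nonzero homotopic modification exists, and then to track how such modifications propagate across the unfolded diagram. First I would recall from Lemma~\ref{lemma: basis maps determined} that each element of $\basisC{v}{w}$ is determined by a single nonzero component, so $\homotopy{f} \neq \{f\}$ means there is a component $p$ of $f$ and a nonzero correction term of the form $\alpha v_L a$, $\beta v_R b$, $\gamma c w_L$ or $\delta d w_R$ such that $q = p + (\text{correction})$ is again (a scalar multiple of) a basis element. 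I would show that a single such correction forces an adjacent component, which in turn forces the next, so that the homotopic map $f'$ differs from $f$ along a whole connected stretch of the two unfolded diagrams; the stretch is a sequence of squares in which the vertical maps are isomorphisms (the correction terms `slide' identity maps along), i.e. exactly the shape of a (quasi-)graph overlap.

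Next I would pin down the endpoints of this stretch. Since $f \in \single{v}{w} \cup \double{v}{w}$ satisfies the conditions \Lone--\Rtwo, the orientations of $v_L, v_R, w_L, w_R$ at the ends of the overlap are constrained: a correction can be performed on the left precisely when $v_L$ or $w_L$ has the `wrong' orientation to form a graph map, and likewise on the right. Comparing with the left and right endpoint conditions \LGone, \LGtwo, \RGone, \RGtwo, I would argue that $\homotopy{f} \neq \{f\}$ forces \emph{none} of these four conditions to hold for the overlap obtained — which by Definition~\ref{def:quasigraph} is exactly the statement that there is a quasi-graph map, but crucially one from $Q_v$ to $\Sigma^{-1}Q_w$ rather than to $Q_w$ (the degree shift accounts for the fact that the homotopy $h^i$ lowers degree by one). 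The $p$ single maps $u_p, \dots, u_1$ together with the one or two endpoint maps are then visibly the quasi-graph map representatives, and the correspondence between $\homotopy{f}$ and these representatives is the bijection claimed: the homotopy relations let one pass from any one representative to any other, and conversely the decomposition of any homotopic map into $\basisC{v}{w}$ uses only these.

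For the null-homotopy dichotomy and items (1)--(3): $f$ is null-homotopic iff the entire stretch can be `absorbed', which happens precisely when the overlap reaches a genuine endpoint of one of the strings on both sides with no leftover endpoint component — i.e. when there is no $v_L$/$w_L$ (resp. $v_R$/$w_R$) obstructing the slide. If instead $v_L \neq 0$ is direct and $w_L \neq 0$ is inverse, then the leftmost correction cannot be completed past that point; the leftover piece is a commuting square with $v_L$ going down-right and $w_L$ coming in, which is exactly a double map $(v_L, w_L)$ in the sense of Definition~\ref{def: double}, giving case~(1), and dually $v_R$ inverse, $w_R$ direct gives the double map on the right in case~(2). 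In all remaining sub-cases one endpoint orientation still blocks the slide but the blocked piece is a single component rather than a commuting square, yielding the single map of case~(3); I would just enumerate the orientation possibilities for $(v_L, w_L)$ and $(v_R, w_R)$ — both direct, both inverse, one zero, etc. — and read off which give a single and which give a double.

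The main obstacle I anticipate is the bookkeeping in the propagation argument: showing rigorously that once one correction term is nonzero, the induced component at the next vertex is again a scalar multiple of an isomorphism (so that the next square again admits a correction), and that this process terminates exactly at the endpoint configurations above rather than, say, running into a relation in the middle. This is where gentleness and Lemma~\ref{lemma: zero squares} do the real work — the `at most one nonzero commutativity relation' statement is what guarantees the slide does not fork or die prematurely — so the argument is essentially a careful induction along the overlap, with the four endpoint conditions \LGone--\RGtwo serving as the precise list of ways the induction can stop.
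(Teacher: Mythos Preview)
Your approach is essentially the paper's: build $\homotopy{f}$ by an iterative ``slide'' along the overlap, identify the resulting sequence of single/double maps with the representatives of a quasi-graph map $Q_v \to \Sigma^{-1}Q_w$, and read off the endpoint behaviour. The paper makes this explicit as an algorithm: at each stage one checks whether the current single map equals an \emph{unused} differential component (``built from the source/target differential''), constructs the next homotopic single or double map accordingly, and tracks which differentials have been consumed. Your ``propagation'' paragraph is this algorithm in prose.

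One point needs tightening. Your characterisation of when $f$ is null-homotopic --- ``the overlap reaches a genuine endpoint of one of the strings on both sides with no leftover endpoint component'' --- covers only the cases corresponding to \LGtwo/\RGtwo\ (one of $v_L,w_L$ is zero, or they have opposite orientations with $v_L$ inverse and $w_L$ direct). It misses the case corresponding to \LGone/\RGone: $v_L$ and $w_L$ are both direct or both inverse and there is a non-stationary path $f_L$ making the square commute. In the paper's language this is condition \None\ (a homotopy triangle commutes with a non-stationary path), and it produces a null-homotopy without reaching a string endpoint. You do later say ``none of \LGone, \LGtwo, \RGone, \RGtwo\ hold'', which is the correct criterion, but your explanation of \emph{why} these are exactly the null-homotopy cases is incomplete without treating the same-orientation case separately.

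A smaller remark: Lemma~\ref{lemma: zero squares} is not really what drives the propagation here. The paper's argument is a direct case split on whether there is an arrow out of (or into) the relevant vertex $\star_w$ (or $\star^v$); gentleness enters only to guarantee that at most one unused differential can equal the current component, so the slide is unambiguous. Invoking Lemma~\ref{lemma: zero squares} for this is harmless but slightly off-target.
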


\begin{proof}
  For simplicity, we consider only the case $f \in \single{v}{w}$. The
  case $f \in \double{v}{w}$ is similar. The setup is the following,
  where by abuse of notation we have denoted the map and its unique
  non-zero component by $f$:
\[
\xymatrix@R=4mm{
Q_v \colon \ar[d]_f & \ar@{.}[r]  & \xydot \arr^-{v_{-2}} & \xydot \arr^-{v_{-1}}  & \xydot \ar[d]^-{f} \arr^-{v_0}   & \xydot \arr^-{v_1} & \xydot \ar@{.}[r]  &  \\
Q_w \colon & \ar@{.}[r]  & \xydot \arr_-{w_{-2}} & \xydot \arr_-{w_{-1}} & \xydot \arr_-{w_0}               & \xydot \arr_-{w_1} & \xydot \ar@{.}[r] & }
\]
As explained in Section \ref{sec:homotopy-general}, the components
corresponding to $f$ in any homotopic map can be constructed only from
four possible paths $v_{-1}'$,$v_0'$, $w_{-1}'$ or $w_0'$ illustrated
in the following diagrams:
\[
 \quad
\xymatrix{
\ar@{.}[r]  & \star_1 \ar@{<-}[r]^-{v_{-1}} \ar@{-->}[dr]_<<{v'_{-1}}  & \xydot \ar[d]^-{f} \ar[r]^-{v_0}   & \star_2 \ar@{.}[r] \ar@{-->}[dl]^<<{v'_0} &  \\
\ar@{.}[r]  & \xydot \arr_-{w_{-1}} & \star_w \arr_-{w_0}               & \xydot \ar@{.}[r] & }
\quad  \quad  \quad
\xymatrix{
\ar@{.}[r]  & \xydot \arr^-{v_{-1}}  & \star^v \ar[d]^-{f} \arr^-{v_0} \ar@{-->}[dl]_>>{w'_{-1}} \ar@{-->}[dr]^>>{w'_0}  & \xydot \ar@{.}[r]  &  \\
\ar@{.}[r]  & \star^1 \ar[r]_-{w_{-1}}  & \xydot                 & \star^2 \ar[l]^-{w_0}\ar@{.}[r] & }
\]
Observe that $f$ can be immediately seen to be null-homotopic in the
following cases:
\begin{itemize}
\item[\None] If any of $v'_{-1}$, $v'_0$, $w'_{-1}$ and $w'_0$ (exist
  and) are non-stationary paths making one of the triangles commute.
\item[\Ntwo] If there are no arrows out of $\star_w$ and either $f=
  v_{-1}$ and there is no other arrow in $v$ into $\star_1$, or,
  $f=v_0$ and there is no other arrow in $v$ into $\star_2$.
\item[\Nthree] If there are no arrows into $\star^v$ and either
  $f=w_{-1}$ and there is no other arrow in $w$ out of $\star^1$, or
  $f=w_0$ and there is no other arrow in $w$ out of $\star^2$.
\end{itemize}
The conditions \None--\Nthree correspond to the endpoint conditions
\LGone, \LGtwo, \RGone\ and \RGtwo\ of Definition~\ref{def:graphmaps}
for graph maps.

Suppose we are not in any of the cases \None--\Nthree. Since
$\homotopy{f} \neq \{f\}$, at least one, but possibly both, of the
following must hold:
\begin{itemize}
\item $f$ is \emph{built from the source differential}, i.e.\ $f =
  v_{-1}$ or $f = v_0$ (but not both); or
\item $f$ is \emph{built from the target differential}, i.e.\ $f =
  w_{-1}$ or $f = w_0$ (but not both).
\end{itemize}

Suppose $f$ can be built from the source differential; the argument
when $f$ can be built from the target differential is dual. Without
loss of generality, we assume $f = v_0$; if $f=v_{-1}$, invert the
homotopy string $v$ and re-label $v_{-1}$ as $v_0$. The differential
$v_0$ will be called a \emph{used differential}, because it has
already been used to construct one of the single or double maps,
namely $f$ in this instance.  There are two cases.

\Casecolon{There are no arrows out of $\star_w$} We must be in the
situation that $w_{-1}$ is direct, $w_0$ is inverse, and $v_1$ exists
and is inverse -- for otherwise we would be in case \Ntwo\ above. The
following diagram describes the situation.
\[
\xymatrix{
\ar@{.}[r]  & \xydot \arr^-{v_{-1}}  & \xydot \ar[d]_-{f} \ar[r]^-{v_0}   & \xydot \ar@{<-}[r]^-{v_1}  \ar@{==>}[dl] & \xydot \ar@{.}[r] &  \\
\ar@{.}[r]  & \xydot \ar[r]_-{w_{-1}} & \xydot \ar@{<-}[r]_-{w_0}               & \xydot \arr_-{w_1} & \xydot \ar@{.}[r] & }
\quad \path \quad
\xymatrix{
\ar@{.}[r]  & \xydot \arr^-{v_{-1}}  & \xydot \ar[d]_-{f} \ar[r]^-{v_0}   & \xydot\ar@{=>}[dl] & \xydot \ar[l]_-{v_1} \ar@{.}[r] \ar@/^0.5pc/@{-->}[dll]^<<<{h} &  \\
\ar@{.}[r]  & \xydot \arr_-{w_{-1}} & \xydot \ar[r]_-{w_0}               & \xydot \arr_-{w_1} & \xydot \ar@{.}[r] & \, . } 
\]
In particular, $f = (v_0)$ is homotopic to the single map $-h =
-(v_1)$. We can now see part of the quasi-graph map constructed:
namely, the equality written diagonally. We now wish to continue by
building $h$ from a differential. The differential $v_1$ has already
been used, so in order to continue, we must see whether $h = w_0$ or
$h = w_{-1}$, and then use the dual argument for maps built from the
target differential.

\Casecolon{There is an arrow out of $\star_w$} Without loss of
generality, assume $w_0 \neq 0$ is direct. Note that only one of $w_0$
or $w_{-1}$ may be an arrow out of $\star_w$ -- otherwise $f$ cannot
be a well-defined single map. Then there exists
\begin{itemize}
\item a single map $g = w_0 \colon P(s(v_0)) \to P(s(w_0))$ if $v_1$
  is direct or zero; or
\item a double map $(h,g) = (v_1 , w_0)$, whenever $v_1$ is inverse:
\end{itemize}
The used differentials are $v_0$ and $w_0$ in the first case; in the
second case $v_1$ is additionally a used differential. The situation
is illustrated below:
\[
\xymatrix{
\ar@{.}[r]  & \xydot \arr^-{v_{-1}}  & \xydot \ar[d]_-{f} \ar[r]^-{v_0}   & \xydot \arr^-{v_1}  \ar@{==>}[dl] & \xydot \ar@{.}[r] &  \\
\ar@{.}[r]  & \xydot \arr_-{w_{-1}} & \xydot \ar[r]_-{w_0}               & \xydot \arr_-{w_1} & \xydot \ar@{.}[r] & }
\quad \path \quad
\xymatrix{
\ar@{.}[r]  & \xydot \arr^-{v_{-1}}  & \xydot \ar[d]_-{f} \ar[r]^-{v_0}   & \xydot\ar@{=>}[dl] \ar@{-->}[d]^<<<{g} & \xydot \ar[l]_-{v_1} \ar@{.}[r] \ar@/^0.5pc/@{-->}[dll]^<<<{h} &  \\
\ar@{.}[r]  & \xydot \arr_-{w_{-1}} & \xydot \ar[r]_-{w_0}               & \xydot \arr_-{w_1} & \xydot \ar@{.}[r] & \, . } 
\]
This gives $-g \in \homotopy{f}$ or $-(h,g) \in \homotopy{f}$.  If the
map we obtain at this step is a single map then we carry on, using the
dual argument if necessary, to obtain further elements of
$\homotopy{f}$. The algorithm terminates when we reach one of the
following three cases.
\begin{itemize}
\item We reach a single map $g$ for which one of the conditions \None,
  \Ntwo\ or \Nthree\ is satisfied. In this case $f$ is null-homotopic.
\item We reach a single map $g$ which is not equal to any of the
  unused differentials with respect to the already constructed
  elements of $\homotopy{f}$. This places us in case \eqref{item:single-end} of the proposition.
\item We reach a double map; here there are insufficiently many unused
  differentials to continue to use to construct a homotopy. This
  places us in case \eqref{item:left-double-end} or
  \eqref{item:right-double-end} of the proposition.
\end{itemize}
If $f$ can also be built from the target differential, we must now
return to $f$ and carry out the dual algorithm.
\end{proof}

\begin{remark}
  Null-homotopic single and double maps correspond to `quasi-graph
  maps' which satisfy one of the graph map endpoint conditions.
\end{remark}

\begin{example}
  Recall the quasi-graph map exhibited in Example~\ref{ex: homotopy
    class}. Below is an explicit homotopy from $(c)$ to $(dc)$; the
  top line shows this at the level of complexes, the bottom line at
  the level of unfolded diagrams. In particular, the bottom line
  indicates how to use a quasi-graph map to construct families of
  homotopic maps.
\begin{align*}
& \xymatrix@R=6mm@C=7mm{     &     \smxy{P(0)} \ar[dl]_{1} \ar[d]_-{\left( \begin{smallmatrix}c & 0 \end{smallmatrix} \right)} \ar[r]^{f}      &      \smxy{P(3)} \ar[dl]_{\left( \begin{smallmatrix}0 & -1 \end{smallmatrix} \right)} \ar[d]_{0} 
\ar[r]^e     &      \smxy{P(4)} \ar[dl]_1 \ar[d]_{dc} \ar[r]^{dc}         &       \smxy{P(2)} \ar[dl]_0 \ar[d]_0 \ar[r]^-{\left( \begin{smallmatrix}b & 0 \end{smallmatrix} \right)}         &         \smxy{P(1) \oplus P(4)} \ar[dl]_0 \ar[d]_0 \ar[r]^{\left( \begin{smallmatrix}a \\ d \end{smallmatrix} \right)}          &          \smxy{P(0)} \ar[dl]_0 \\
\smxy{P(0)} \ar[r]_-{\left(\begin{smallmatrix} c & f \end{smallmatrix}\right)}          &         \smxy{P(2)\oplus P(3)} \ar[r]_-{\left(\begin{smallmatrix} 0 \\ e \end{smallmatrix}\right)}               &          \smxy{P(4)} \ar[r]_-{dc} & \smxy{P(2)}\ar[r]_b & \smxy{P(1)} \ar[r]_a & \smxy{P(0)}      &
} \\
& \xymatrix@R=2mm{ 
Q_v  \colon     &    & \xydot \ar@{=}[d]_1 \ar[dl]_c \ar[r]^-{f} & \xydot \ar@{=}[d]_{-1} \ar[r]^{e} & \xydot \ar@{=}[d]_1 \ar[r]^-{dc} \ar[dr]^>>>>{dc} & \xydot \ar[r]^{b} \ar@{==}[d]^0 & \xydot \ar@{==}[d]^0 \ar[r]^{a} & \xydot \ar@{==}[d]^0 \ar@{<-}[r]^{d} & \xydot \,  \\
\Sigma^{-1}Q_w \colon      &  \xydot \ar@{<-}[r]_-{c} & \xydot \ar[r]_-{f}  & \xydot \ar[r]_{e}                             & \xydot \ar[r]_{dc}            & \xydot \ar[r]_{b}  & \xydot \ar[r]_{a}            & \xydot         &  
}
\end{align*}
 
\end{example}

\subsection{Basic maps $f$ such that $\homotopy{f} =
  \{f\}$} \label{sec:singletons}

We start by observing that graph maps belong to singleton homotopy
classes, and thus are never null-homotopic.

\begin{lemma} \label{lem:homotopy-graph} Suppose $f \in
  \graph{v}{w}$. Then $\homotopy{f} = \{f\}$.
\end{lemma}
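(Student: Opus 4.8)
The plan is to show that for a graph map $f \in \graph{v}{w}$, no non-trivial homotopy can be constructed, so that $\homotopy{f} = \{f\}$. Following the strategy laid out in Section~\ref{sec:homotopy-general}, it suffices to show that each non-zero component $p$ of $f$ cannot be altered by adding a term of the form $\alpha v_L a + \beta v_R b + \gamma c w_L + \delta d w_R$ coming from the surrounding differentials while staying in the span of $\basisC{v}{w}$. The key point is that the components of a graph map are either isomorphisms (along the ``overlap'' part of the diagram) or, at the two endpoints, paths governed by the endpoint conditions \LGone--\RGtwo.

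First I would treat the isomorphism components. If $p$ is an isomorphism sitting in the interior of the overlap, the adjacent homotopy letters $v_L, v_R$ (in $Q_v$) and $w_L, w_R$ (in $Q_w$) are precisely the letters $u_{k}$ appearing on both rows of the graph-map diagram; in particular $v_L = w_L$ and $v_R = w_R$ as the double lines are identities (using the standardisation convention for graph maps). A completion term $\alpha v_L a$ only makes sense when $v_L$ is inverse, and then $v_L a$ would be a non-stationary path, so $p + \alpha v_L a$ is no longer an isomorphism; by Lemma~\ref{lemma: basis maps determined} the only element of $\basisC{v}{w}$ with an isomorphism in this position is $f$ itself, and any non-stationary correction would have to be cancelled by another basis element whose support, again by Lemma~\ref{lemma: basis maps determined}, is forced — and one checks there is no such element, because the homotopy-string conditions (1)--(4) force the relevant compositions like $v_L u_k$ or $u_k w_L$ to vanish exactly when they would be needed. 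So the isomorphism components are rigid.

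Next I would treat the two endpoint components. By the definition of graph map, at the left endpoint one of \LGone\ or \LGtwo\ holds. Under \LGone, $v_L$ and $w_L$ are both direct or both inverse and the square $(*)$ already commutes via $f_L$; a homotopy would try to modify $f_L$, but the only available completion respecting degrees is the term along $v_L$ (if inverse) or $w_L$ (if direct), and commutativity of $(*)$ together with gentleness (condition \ref{item:length2}) pins $f_L$ down uniquely exactly as in the proof of Lemma~\ref{lemma: basis maps determined}, leaving no freedom. Under \LGtwo, the orientations of $v_L$ and $w_L$ are incompatible: if $v_L$ is non-zero it is inverse and if $w_L$ is non-zero it is direct, so neither $\alpha v_L a$ nor $\gamma c w_L$ makes sense as a completion (the degrees do not match), and no new component can appear at that end. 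The same analysis applies on the right via \RGone/\RGtwo. Hence every component of $f$ is rigid, $\homotopy{f} = \{f\}$, and in particular $f$ is not null-homotopic since $0 \notin \homotopy{f}$.

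The main obstacle I anticipate is the bookkeeping at the interface between an isomorphism component and an endpoint component: one must be careful that a correction term introduced ``for free'' at one projective does not force, via the commuting squares, an inconsistent correction at the neighbouring projective — in other words, that the rigidity propagates correctly along the whole overlap. This is handled by exactly the determinacy statement of Lemma~\ref{lemma: basis maps determined}: any non-zero component determines the entire unfolded diagram, so a purported non-trivial homotopic map would have to agree with $f$ on its support and differ only by something supported outside, which the endpoint conditions \LGone--\RGtwo\ forbid. The rest is routine case-checking on orientations, which I would not spell out in full.
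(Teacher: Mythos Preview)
Your approach is essentially the paper's: examine an isomorphism component $f_i = 1$, observe that the homotopy correction $f_i - g_i$ lies in the span of non-stationary paths (each term $u_k a$, $c u_k$, etc.\ is either zero or non-stationary), and conclude $g_i = 1$; then Lemma~\ref{lemma: basis maps determined} gives $g = f$. The paper stops there --- once a single isomorphism component of $g$ is pinned down, that lemma determines \emph{all} of $g$, including the endpoint components $f_L, f_R$, so your separate case analysis under \LGone/\LGtwo\ and \RGone/\RGtwo\ is redundant.

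One point where your write-up wobbles: the remark that a non-stationary correction ``would have to be cancelled by another basis element'' is confused. By the definition of $\homotopy{f}$, the map $g$ is a scalar multiple of a \emph{single} element of $\basisC{v}{w}$, not a linear combination, so its component $g_i$ is either zero, a scalar, or a scalar times a non-stationary path. The clean argument is just that $1 - g_i$ lying in the span of non-stationary paths forces $g_i = 1$; no cancellation or appeal to the homotopy-string conditions (1)--(4) is needed at this step.
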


\begin{proof}
  Suppose $f \simeq g \colon Q_v \to Q_w$. Since $f$ is a graph map,
  there is an unfolded diagram:
\[ 
\xymatrix@R=4mm{ 
Q_v  \colon     &  \ar@{.}[r] & \xydot \ar[d]_{f_L} \ar@{-}[r]^{v_L} & \xydot \ar@{}[dl] \ar@{=}[d]_-{f_p} \ar@{-}[r]^{u_p} & \xydot \ar@{=}[d]_-{f_{p-1}} \ar@{-}[r]^{u_{p-1}} & \cdots \ar@{-}[r]^{u_2}  & \xydot \ar@{=}[d]^-{f_1} \ar@{-}[r]^{u_1} & \xydot \ar@{=}[d]^-{f_0} \ar@{-}[r]^{v_R} & \xydot \ar[d]^{f_R} \ar@{.}[r] &  \\
Q_w \colon      &  \ar@{.}[r] & \xydot \ar@{-}[r]_{w_L}             & \xydot \ar@{-}[r]_{u_p}                        & \xydot \ar@{-}[r]_{u_{p-1}}            & \cdots \ar@{-}[r]_{u_2}  & \xydot \ar@{-}[r]_{u_1}            & \xydot \ar@{-}[r]_{w_R}             & \xydot \ar@{.}[r]            &  \\
}
\]
Without loss of generality, assume that $f_i = 1$ for $0 \leq i \leq
p$. Consider the component $f_i$. Denote the corresponding component
of the map $g$ by $g_i$, which may be zero. Existence of a homotopy
between $f$ and $g$ means that the difference between $f_i$ and $g_i$
is a linear combination,
\[
f_i - g_i = 1 - g_i = \alpha u_i a + \beta u_{i+1} b + \gamma c u_i + \delta d u_{i+1}, 
\]
where the $\alpha,\beta,\gamma, \delta$ are scalars and the $a,b,c,d$
are paths in the quiver corresponding to the homotopy maps. If the
composition does not make sense, we take the corresponding scalar to
be zero. Since components of the differential are never zero, the
compositions $u_i a$, $u_{i+1} b$, $c u_i$ and $ d u_{i+1}$ are either
zero or non-stationary paths in the quiver.  It follows that $\alpha =
\beta = \gamma = \delta = 0$ and $g_i = 1$.  Lemma~\ref{lemma: basis
  maps determined} gives $f=g$, whence $\homotopy{f} = \{f\}$.  If
$i=p$ then this argument should be adjusted in the obvious way.
\end{proof}

Next we consider singleton homotopy classes of single and double maps.
It is clear that a single map $f \in \single{v}{w}$ is in a singleton
homotopy class exactly when its unfolded diagram corresponds to one of
$(i) - (iv)$ in Definition~\ref{def:singleton}. Therefore, we need
only examine when a double map occurs in a singleton homotopy class.

Recall the setup from Section~\ref{sec:single}\eqref{eq:double} on
page~\pageref{eq:double}. We say that $f = (f_L, f_R)$ has \emph{no
  common substring} if $v_0 = f_L f'$ and $w_0 = f' f_R$, and has
\emph{common substring} $s$ if $f_L = v_0 s$ and $f_R = s w_0$.  Note
that these are the only ways in which the commutative square $(*)$ in
diagram \eqref{eq:double} can decompose. Furthermore, $s$ may be a
stationary path, in which case $f$ has \emph{trivial common
  substring}.

The following lemma shows that any double map in a singleton homotopy
class satisfies condition \D\ of
Definition~\ref{def:singleton-double}. This then completes the proof
of Theorem~\ref{thm:main}.

\begin{lemma} \label{lem:doubles} Let $f=(f_L,f_R)$ be a double map.
  \begin{enumerate}[label=(\arabic*)]
  \item If $\homotopy{f} \neq \{f\}$, then $f$ has a common substring.
  \item If $f$ has a non-trivial common substring, then $f$ is
    null-homotopic.
  \end{enumerate}
\end{lemma}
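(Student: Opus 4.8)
The plan is to prove part~(1) by contraposition and part~(2) by exhibiting an explicit contracting homotopy. Both arguments rest on the two features of the gentle setting used throughout Section~\ref{sec:proof}: a component of a differential, or of $f$, has no subpath in $I$; and, since $I$ is generated in degree $2$, a composite of such ``clean'' paths can vanish only by acquiring a relation at one of its junctions. Recall also, from the discussion preceding the lemma, that the commuting square $(*)$ of \eqref{eq:double} decomposes in exactly one of two ways: $v_0 = f_L f'$, $w_0 = f' f_R$ (``no common substring''), or $f_L = v_0 s$, $f_R = s w_0$ (``common substring $s$''), where $v_0$, $w_0$ denote the middle differentials $u_v$, $u_w$ of \eqref{eq:double}. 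A non-trivial common substring is the second case with $s$ non-stationary, and ``no common substring'' is the first case with $f'$ non-stationary --- which is precisely condition~\D\ of Definition~\ref{def:singleton-double}.

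For part~(2) I would argue as follows. Write the two components of $f$ as $A \xrightarrow{f_L} A'$ and $B \xrightarrow{f_R} B'$, with $A \xrightarrow{v_0} B$ a differential component of $Q_v$, $A' \xrightarrow{w_0} B'$ one of $Q_w$, and $f_L = v_0 s$, $f_R = s w_0$, where $s$ is a non-stationary path $B \path A'$. Take $h$ to be the homotopy whose only non-zero component is $s \colon B \to A'$; this is legitimate, since $B$ and $A'$ lie in consecutive cohomological degrees. A direct computation of $dh + hd$ produces $v_0 s = f_L$ at the $(A,A')$-entry and $s w_0 = f_R$ at the $(B,B')$-entry, as required; the only other entries where $dh+hd$ could be non-zero involve a neighbouring summand, namely a term $v_R s$ exactly when the letter $v_R$ of \eqref{eq:double} is inverse, and a term $s w_L$ exactly when the letter $w_L$ of \eqref{eq:double} is inverse. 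Now conditions~\Rone\ and \Ltwo\ of Definition~\ref{def: double} give $v_R f_R = v_R s w_0 = 0$ and $f_L w_L = v_0 s w_L = 0$; since $v_R, v_0, w_L, w_0$ and $s$ are clean, and the junctions $v_0$--$s$ and $s$--$w_0$ are clean (they lie inside the non-zero paths $f_L, f_R$), the relation forcing $v_R s w_0 = 0$ must sit at the junction $v_R$--$s$, so already $v_R s = 0$; symmetrically $s w_L = 0$. Hence $dh + hd = f$, so $f$ is null-homotopic. This is exactly where non-triviality of $s$ is used: when $s$ is stationary there is no such junction, in keeping with the fact that $f = (v_0, w_0)$ need not be null-homotopic.

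For part~(1) I would prove the contrapositive: if $f$ has no common substring then $\homotopy{f} = \{f\}$. Thus $v_0 = f_L f'$, $w_0 = f' f_R$ with $f'$ non-stationary, so $f_L$ is a proper initial subpath of the differential $v_0$ and $f_R$ a proper terminal subpath of $w_0$. By Section~\ref{sec:homotopy-general} any $g \simeq f$ is obtained from $f$ by completing differentials, and the crux is to classify these completions near the two entries of $f$. The only homotopy component able to alter both entries simultaneously is a path $b \colon B \to A'$ through the summand $B$ shared by $f_L$ and $f_R$ in $Q_v$; it replaces $f_L$ by $f_L \pm v_0 b = f_L \pm f_L(f' b)$ and $f_R$ by $f_R \pm b w_0 = f_R \pm (b f') f_R$. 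Since $f'$ --- and any non-trivial $b$ --- is non-stationary, whenever $v_0 b \neq 0$ the path $f_L(f' b)$ is strictly longer than $f_L$, so the altered $f_L$-entry is a sum of two distinct paths and $g$ is not a scalar multiple of a basis element; and if $v_0 b = 0$ then either $b w_0 = 0$ and $b$ has no effect, or, by the same length argument applied to $f_R \pm (b f') f_R$, the altered $f_R$-entry is again a genuine sum. The remaining homotopy components touch only one of $f_L$, $f_R$, adding to it a term $v_L a$ or $c w_L$, resp.\ $v_R a'$ or $c' w_R$; invoking conditions~\Lone, \Ltwo\ for $f_L$ and \Rone, \Rtwo\ for $f_R$, together with the quadratic-relations property, I expect each such term to be zero or else to leave the affected entry a genuine sum, hence unable to produce a new basis element. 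Combining these, the only $g$ in $\homotopy{f}$ is $f$ itself. An alternative route is to run the algorithm in the proof of Proposition~\ref{prop:homotopy-classes} starting from the double map $f$: the hypothesis $\homotopy{f} \neq \{f\}$ would force $f$ to be ``built from a differential'', i.e.\ one of $f_L$, $f_R$ to coincide with an adjacent differential of $Q_v$ or $Q_w$, which is incompatible with $v_0 = f_L f'$, $w_0 = f' f_R$ and $f'$ non-stationary.

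The hard part is part~(1): one must enumerate, for each of the two entries of the double map, all four kinds of homotopy completion and check --- using the well-definedness conditions~\Lone--\Rtwo\ and gentleness --- that none can convert $f$ into a different basis element once $f_L$ and $f_R$ are proper subpaths of $v_0$ and $w_0$. Part~(2), by contrast, is routine once one notices that the vanishing conditions $v_R f_R = 0$ and $f_L w_L = 0$ can be promoted to $v_R s = 0$ and $s w_L = 0$ by locating the relation at the junction with $s$.
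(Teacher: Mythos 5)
Your part~(2) is correct and is essentially the paper's own proof: the paper also takes the homotopy to be the single component $s \colon P(s(v_0)) \to P(t(w_0))$. Your additional verification that the cross terms $v_R s$ and $s w_L$ vanish --- promoting $v_R f_R = 0$ and $f_L w_L = 0$ to $v_R s = 0$ and $s w_L = 0$ by locating the quadratic relation at the junction with $s$ --- is a detail the paper leaves implicit, and you carry it out correctly.

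For part~(1) the paper argues directly rather than by contraposition, but the logical content is the same: if $\homotopy{f} \neq \{f\}$ then the path $f_L$ must occur among the homotopy modification terms at its entry, so $f_L = v_L v_L'$, $f_L = v_0 v_0'$ or $f_L = w_L' w_L$; the outer two options are eliminated and the middle one forces $f_R = v_0' w_0$, i.e.\ a common substring. Your treatment of the middle option (the component $b \colon B \to A'$ through the shared summand) is correct and is exactly where ``no common substring'' enters: $v_0 b = f_L(f'b)$ is strictly longer than $f_L$, so it cannot cancel it. The gap is the other two options, which you dispose of with ``I expect each such term to be zero or else to leave the affected entry a genuine sum.'' That is precisely the non-trivial content of the paper's proof, and the conditions \Lone--\Rtwo\ you invoke are not the right tools: they constrain the cases $v_L$ direct and $w_L$ inverse, whereas the dangerous homotopy terms $v_L a$ and $c\, w_L$ exist only when $v_L$ is \emph{inverse} and $w_L$ is \emph{direct}, respectively. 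What actually kills them is the commutativity $f_L w_0 = v_0 f_R \neq 0$ of the square in \eqref{eq:double} combined with the axioms defining a homotopy string: if $f_L = w_L' w_L$ with $w_L$ direct, then $f_L w_0 \neq 0$ would force $w_L w_0 \neq 0$, contradicting the rule that consecutive direct letters pass through a relation; and if $f_L = v_L v_L'$ with $v_L$ inverse, then $f_L w_0 = v_0 f_R$ forces $v_L$ and $v_0$ to share their arrow at the common vertex, contradicting the rule governing an inverse letter followed by a direct one. You need to supply these two eliminations (and note that they, like the paper's, rely on the commutativity relation of the square being non-zero) to complete the argument.
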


\begin{proof}
  Suppose $\homotopy{f}$ is not a singleton set.  Then one of the
  following must hold: $v_Lv_L' = f_L$, $v_0v_0' = f_L$ or $w_L'w_L =
  f_L$ for some $v_L', v_0', w_L'$ paths in the quiver.  If $v_0v_0' =
  f_L$, then since $f_Lw_0 = v_0f_R$ it follows that $v_0$ and $v_L$
  start with the same arrow, a contradiction.  If $w_L'w_L = f_L$,
  then since $f_Lw_0 \neq 0$, we must have that $w_Lw_0 \neq 0$, a
  contradiction.  Thus $v_0v_0' = f_L$ and $v_0'$ is a non-zero
  component in the homotopy.  But then $v_0'w_0 = f_R$ and so $f$ has
  a common substring $v_0$.

  Let $s$ be a non-trivial common substring for $f$, then we can take
  the required family of maps to be zero everywhere except for the
  component $P(s(v_0)) \to P(t(w_0))$ which is taken to be $s$.
\end{proof}

%============================================================================
% Higher-dimensional bands
\section{Higher-dimensional band complexes}\label{sec:bands}
%============================================================================

Each pair $(w,\lambda) \in \bands \times \kk^*$ determines a
homogeneous tube in $\sK \subset \sD$:
\[
\xymatrix{
B_{w,\lambda,1} \ar@/^/[r] & B_{w,\lambda,2} \ar@/^/[r] \ar@/^/[l] & B_{w,\lambda,3} \ar@/^/[r] \ar@/^/[l] & B_{w,\lambda,4} \ar@/^/[r] \ar@/^/[l] & \cdots , \ar@/^/[l]
}
\]
where we refer the reader to Section~\ref{sec:higher-defn} for a
precise definition of the higher dimensional band $B_{w,\lambda,r}$
for $r >1$. We shall show that the dimensions of the Hom spaces
involving a higher dimensional band complex can be determined using
the dimension of the Hom space of the corresponding one-dimensional
band occurring at the mouth of the tube.

We start by making these definitions precise and describing unfolded
diagrams for higher dimensional tubes. For simplicity, in this section
we shall assume that $\kk$ is an algebraically closed field.

\subsection{Definition, example and unfolded
  diagrams} \label{sec:higher-defn} Let $(w,\lambda,r) \in \bands
\times \kk^* \times \bN$ and recall that $B_{w,\lambda,1}\coloneqq
(B^i_{w,\lambda,1}, D^i)$, with $D^i = (d^i_{jk})$, denotes the
one-dimensional band complex. The \emph{$r$-dimensional band complex}
is defined as follows:
\[
B_{w,\lambda,r} \coloneqq \big( (B_{w,\lambda,1}^i)^r, D^i_{w,\lambda,r} \big),
\]
where
\[
D^i_{w,\lambda,r} =\left(
\begin{array}{cccc}
D^i & A^i & & 0 \\ 
0 & D^i & & 0 \\
0 & 0 & \ddots & A^i \\
0 & 0 & & D^i \\
\end{array}
\right),
\text{ and }
A^i = (a^i_{kl})
\text{ is given by }
a^i_{kl} =
\left\{
\begin{array}{ll}
w_1 & \text{ if } d^i_{kl} = \lambda w_1; \\
0   & \text{otherwise.}
\end{array}
\right.
\]

The \emph{unfolded diagram of $B_{w,\lambda,r}$} consists of $r$
aligned copies of the unfolded diagram of $B_{w,\lambda,1}$ arranged
from top to bottom of the page called \emph{layers}, which are
connected by \emph{downwards} arrows corresponding to the non-zero
entries of $A^i$, called \emph{links}.

\begin{example}\label{ex:higherband}
  Let $z$ be the homotopy band from the Running Example.  The
  two-dimensional band complex $B_{z,\lambda,2}$ is
  \[
  \xymatrix@C=5pc{
    (P(0))^2 \ar[r]_-{\begin{pmat}c&f&0&0\\0&0&c&f\end{pmat}}
    & (P(2) \oplus P(3))^2
    \ar[r]_-{\begin{pmat}b&0&0&0\\0&e&0&0\\0&0&b&0\\
        0&0&0&e\end{pmat}}
    & (P(1) \oplus P(4))^2 \ar[r]_-{\begin{pmat}
        \lambda a&a\\d&0\\0& \lambda a\\0&d\end{pmat}}
    & (P(0))^2  \\
  } \,.
  \]
The corresponding unfolded diagram is:
\[
\xymatrix@R=1.4pc{
  \cdots \ar[dr]^a \ar[r]^{\lambda a}& \xydot & \xydot \ar[l]_{d} & \xydot \ar[l]_{e}
  & \xydot \ar[l]_{f} \ar[r]^{c} & \xydot
  \ar[r]^{b} & \xydot \ar[r]^{\lambda a} \ar[dr]^a& \xydot & \cdots
  \ar[l]_{d} 
  & \text{\small layer $2$} \\
  \cdots \ar[r]^{\lambda a}& \xydot & \xydot \ar[l]_{d} & \xydot \ar[l]_{e}
  & \xydot \ar[l]_{f} \ar[r]^{c} & \xydot
  \ar[r]^{b} & \xydot \ar[r]^{\lambda a}& \xydot & \cdots \ar[l]_{d}
  & \text{\small layer $1$}\\
}
\]

\end{example}

\subsection{Passing through the link}

\begin{definition}\label{def:lifted}
  Let $1 \leq m \leq r$ and $1 \leq n \leq s$.  We say a map $f \in
  \Hom_{\sD}(B_{v,\lambda,r},B_{w,\mu,s})$ is \emph{lifted from a map
    $f' \in \Hom_{\sD}(B_{v,\lambda,1},B_{w,\mu,1})$ to the pair
    $(m,n)$} if the components of $f'$ from layer $m$ of
  $B_{v,\lambda,r}$ to layer $n$ of $B_{w,\mu,s}$ are exactly the same
  as the components of $f'$ and $f$ is the minimal such map in terms
  of number of non-zero components.
\end{definition}

For $f' \in \Hom_{\sD}(B_{v,\lambda,1},B_{w,\mu,1})$, we shall count
the number of (homotopy classes of) maps in
$\Hom_{\sD}(B_{v,\lambda,r},B_{w,\mu,s})$ which are lifted from $f'$.
The idea is to put a copy of $f'$ between the pair $(m,n)$ of layers
and see if a map arises; such a map will be called a \emph{candidate
  map}.  If a component of $f'$ composes non-trivially with a link
arrow, we say that the (candidate) map \emph{passes through the link}.
This will cause lifted maps to have non-zero components between more
layers than just the pair $(m,n)$.

\begin{lemma}\label{lem:differentialhigher}
  If $f\in \Hom_{\sD}(B_{v,\lambda,r}, B_{w,\mu,s})$ then there is a
  map $f' \in \Hom_{\sD}(B_{v,\lambda,1}, B_{w,\mu,1})$ such that $f$
  is lifted from $f'$ to a pair $(m,n)$.
\end{lemma}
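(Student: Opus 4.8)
The plan is to reduce the statement to a structural observation about the block upper-triangular shape of the differential $D^i_{w,\mu,s}$ and to use the fact, established in Proposition~\ref{prop:basis} and the subsequent homotopy analysis, that every morphism at the level of complexes decomposes into basic maps, each of which is completely determined by a single non-zero component (Lemma~\ref{lemma: basis maps determined}). First I would pick a non-zero morphism $f \in \Hom_{\sD}(B_{v,\lambda,r},B_{w,\mu,s})$ and choose a non-zero component of $f$ of \emph{lowest layer index} in the target — that is, a component landing in layer $n$ of $B_{w,\mu,s}$ for $n$ as small as possible, and among these, originating in the highest layer index $m$ of $B_{v,\lambda,r}$. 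The idea is that this extremal component "sees only" the diagonal block $D^i$ of the differentials on both sides: any link arrow $A^i$ in the target would push the component down to a strictly smaller layer index, contradicting minimality of $n$, and any link arrow in the source would require a component from layer $m+1$, contradicting maximality of $m$.

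Next I would argue that this extremal component, together with the basic map it determines via Lemma~\ref{lemma: basis maps determined}, gives a well-defined basic map $f'$ between the single-layer band complexes $B_{v,\lambda,1}$ and $B_{w,\mu,1}$: the commutativity/endpoint conditions for $f'$ to be a graph, single or double map (Definitions~\ref{def:graphmaps}, \ref{def: single}, \ref{def: double}) are exactly the conditions that held for $f$ restricted to the $(m,n)$-block, since on that block the differentials of $B_{v,\lambda,r}$ and $B_{w,\mu,s}$ literally coincide with those of $B_{v,\lambda,1}$ and $B_{w,\mu,1}$. Then $f$ is lifted from $f'$ to $(m,n)$ in the sense of Definition~\ref{def:lifted}, possibly after subtracting the lift of $f'$ and inducting on the number of non-zero components: replacing $f$ by $f - (\text{lift of }f')$ removes the chosen component (and all components forced by the passing-through-the-link mechanism), strictly decreasing the component count, so the decomposition terminates; this also shows every $f$ is a sum of lifts, but for the present lemma we only need the existence of one such $f'$, which the extremal-component argument already supplies.

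The main obstacle I anticipate is making precise the claim that the extremal component genuinely "lives in the diagonal block" and that the resulting $f'$ is a \emph{bona fide} basic map rather than merely a component of one. One has to check that the partner components determined by Lemma~\ref{lemma: basis maps determined} (the $f_L$, $f_R$, or the second component of a double map) also avoid the link arrows; this follows from the same minimality/maximality choice applied one homotopy-letter further along, because a link arrow between two consecutive projectives in the unfolded diagram of $B_{w,\mu,s}$ occurs only at the single distinguished place corresponding to $w_1$ (and similarly for $B_{v,\lambda,r}$), so a small case analysis at that location, using gentleness as in Lemma~\ref{lemma: zero squares}, rules out interference. A secondary subtlety is the pathological identity graph map of Remark~\ref{item:pathology}, which "travels around the band"; here one should note that for a higher-dimensional band this closed-up graph map still restricts on each diagonal block to the single-layer identity graph map, so the extremal-component recipe still identifies a valid $f'$, and no special treatment is needed beyond recording that the map may have infinitely many (isomorphism) components — which is precisely the behaviour anticipated in Remark~\ref{rem:infinitegraphmaps}.
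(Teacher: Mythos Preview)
Your proposal is correct but considerably more elaborate than the paper's argument. The paper's proof is three sentences: it asserts (as a direct extension of Lemma~\ref{lemma: basis maps determined} to higher-dimensional bands) that every map in $\Hom_{\sD}(B_{v,\lambda,r},B_{w,\mu,s})$ is completely determined by any of its non-zero components, then observes that, ignoring the link arrows, each layer of $B_{v,\lambda,r}$ is literally a copy of $B_{v,\lambda,1}$ (and likewise for the target). Hence \emph{any} non-zero component sitting in layers $(m',n')$ already determines a map $f'\colon B_{v,\lambda,1}\to B_{w,\mu,1}$ with that same component, and $f$ is its lift. No extremal choice of component is made.

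Your extremal-layer argument (lowest target layer, highest source layer) is a legitimate way to make the ``ignoring links'' step rigorous without appealing to an unproven higher-dimensional analogue of Lemma~\ref{lemma: basis maps determined}; it buys you an honest reason why the chosen component's partner components stay inside the diagonal block. The paper instead absorbs that work into the blanket assertion that maps are determined by components. Your inductive subtract-and-repeat step, and the discussion of the pathological identity graph map, go beyond what the lemma requires and beyond what the paper does. Note also that both the paper and your reading tacitly treat $f$ as a basic map (a single lift rather than a sum of lifts); this is consistent with how the lemma is used downstream, but your closing remark ``we only need the existence of one such $f'$'' slightly undersells what the statement asks, namely that $f$ itself is the minimal lift of that $f'$.
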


\begin{proof}
  As with one-dimensional maps, all maps in
  $\Hom_{\sD}(B_{v,\lambda,r}, B_{w,\mu,s})$ are completely determined
  by any of their non-zero components.  Ignoring the link arrows
  between layers, we simply have $r$ copies of the band in
  $B_{v,\lambda,r}$ and $s$ copies in $B_{w,\mu,s}$. It follows that
  if there is a map $f \colon B_{v,\lambda,r} \to B_{w,\mu,s}$ with a
  non-zero component from layer $m'$ of $B_{v,\lambda, r}$ to layer
  $n'$ of $B_{w,\mu, s}$, then there is a map $f' \in
  B_{v,\lambda,1}\to B_{w,\mu, 1}$ with the same non-zero component
  and $f$ is lifted from $f'$.
\end{proof}

The following lemma is straightforward.

\begin{lemma}\label{lem:nolink}
  Suppose $f \in \Hom_{\sD}(B_{v,\lambda,r}, B_{w,\mu,s})$ is lifted
  from $f'\in \Hom_{\sD}(B_{v,\lambda,1}, B_{w,\mu,1})$ to the pair of
  layers $(m,n)$.  Then:
\begin{enumerate}
\item \label{item:single} if $f'$ is a single map, then $f$ does not
  pass through any link;
\item \label{item:m=r} if $m=r$, then $f$ does not pass through a link
  in $B_{v,\lambda,r}$;
\item \label{item:n=1} if $n=1$, then $f$ does not pass through a link in $B_{w,\mu,s}$.
\end{enumerate}
\end{lemma}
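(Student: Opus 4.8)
\textbf{Proof plan for Lemma~\ref{lem:nolink}.}

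The plan is to prove each of the three statements directly by tracking how the components of the lifted map $f$ interact with the link arrows of the higher-dimensional band complexes, using the explicit block form of $D^i_{w,\lambda,r}$ given in Section~\ref{sec:higher-defn}. Recall that a link is a downwards arrow corresponding to a non-zero entry of $A^i$, and such entries occur precisely at the positions where $d^i_{kl} = \lambda w_1$; thus a link arrow is labelled by the path $w_1$ (without the scalar) and sits in exactly the degrees where the special differential component $\lambda w_1$ of the band sits. By Definition~\ref{def:lifted}, $f$ is the minimal map whose components between layer $m$ of $B_{v,\lambda,r}$ and layer $n$ of $B_{w,\mu,s}$ agree with $f'$, so every additional non-zero component of $f$ beyond those of $f'$ is forced by the commutativity of squares involving a link arrow.

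For \eqref{item:single}, suppose $f'$ is a single map with unique non-zero component a path $p$ sitting in some degree $t$. A link arrow in $B_{v,\lambda,r}$ or $B_{w,\mu,s}$ attached at the relevant projective would produce a commutativity constraint forcing a component $pw_1$, $w_1 p$, or similar to be non-zero; but the single map conditions \Lone--\Rtwo\ from Definition~\ref{def: single}, together with the fact that the link differential component is $\lambda w_1$ (a component of the genuine band differential, hence subject to exactly the same string/band relations as $w_1$ itself), force all such compositions to vanish. More precisely, since $f'$ is a single map, either $p$ composes to zero with the band differential on each side or the relevant differential has the wrong orientation; in either case $p$ composes to zero with any link arrow, so no new non-zero components are created, and $f$ has the same (single) set of non-zero components as $f'$. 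Hence $f$ does not pass through any link.

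For \eqref{item:m=r}: a link arrow in $B_{v,\lambda,r}$ goes \emph{downwards} from layer $k$ to layer $k+1$ for $1 \leq k \leq r-1$; in particular the bottom layer $r$ has no outgoing link. The only way a candidate map placed between the pair $(r,n)$ could pass through a link in $B_{v,\lambda,r}$ would be for a component of $f'$ at layer $r$ to compose non-trivially with an outgoing link arrow of $B_{v,\lambda,r}$, but there is no such arrow. Dually for \eqref{item:n=1}: a link arrow in $B_{w,\mu,s}$ goes downwards into layer $n$ for $n \geq 2$, so layer $1$ of $B_{w,\mu,s}$ has no incoming link, and a component of $f'$ landing in layer $1$ cannot have been reached by composing with a link arrow of $B_{w,\mu,s}$. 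In both cases the relevant link arrows simply are not present, so $f$ cannot pass through them.

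The only mildly delicate point—and the one I would write out with care—is \eqref{item:single}: one must check that the single map conditions genuinely kill \emph{all} possible compositions with a link arrow, including the case where the non-zero component $p$ of $f'$ sits exactly in the degrees occupied by the distinguished letter $w_1$ of the band. This is where it matters that $A^i$ is supported precisely on the positions where $d^i_{kl} = \lambda w_1$, so that the link arrow behaves, for the purposes of the relations in $\Gamma$, exactly like the differential component $\lambda w_1$; the argument then reduces to the one already used for ordinary single maps. Statements \eqref{item:m=r} and \eqref{item:n=1} are purely a matter of the direction of the link arrows and present no obstacle.
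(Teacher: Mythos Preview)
The paper itself offers no proof of this lemma beyond the sentence ``The following lemma is straightforward.'' Your write-up therefore goes well beyond what the paper provides, and for part~\eqref{item:single} your argument is essentially correct: the link arrow is labelled by the same path as the distinguished differential component $v_1$ (respectively $w_1$), so the single-map conditions \Lone--\Rtwo\ that already kill the compositions of $p$ with the adjacent band differentials also kill the compositions of $p$ with the link arrows.

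However, your treatment of \eqref{item:m=r} and \eqref{item:n=1} has the direction of the link arrows reversed relative to the paper's conventions. In the paper (see Example~\ref{ex:higherband} and the diagrams in Section~\ref{sec:example}), the link arrow goes \emph{downwards from a higher-numbered layer to a lower-numbered layer}: from layer $k+1$ to layer $k$. Consequently, layer $r$ is the \emph{top} layer and has an outgoing link (to layer $r-1$) but no \emph{incoming} link; layer $1$ is the \emph{bottom} layer and has an incoming link (from layer $2$) but no \emph{outgoing} link. What actually matters for the source is that a component of $f'$ at layer $m$ can be forced to spawn components at layer $m+1$ only via a link \emph{into} layer $m$ from layer $m+1$; when $m=r$ there is no such link. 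Dually, for the target a component landing in layer $n$ can be forced to spawn components in layer $n-1$ only via a link \emph{out of} layer $n$ to layer $n-1$; when $n=1$ there is no such link. This is confirmed by Definition~\ref{def:homotopy-class-bands}, where passing through the source link produces a term at layers $(m+1,n)$ and passing through the target link produces a term at layers $(m,n-1)$.

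So your underlying insight---boundary layers lack a link on the relevant side---is exactly right, but the specific claims ``layer $r$ has no outgoing link'' and ``layer $1$ has no incoming link'' are both false in the paper's conventions and should be swapped. Once that is corrected, your argument for \eqref{item:m=r} and \eqref{item:n=1} is complete.
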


Recall the notation in Notation~\ref{not:maps}. To take care of
homotopies for higher-dimensional homotopy bands, we need to modify
Definition~\ref{def:homotopy-class} slightly. Recall that the link in
$B_{v,\lambda,r}$ is given by the homotopy letter $v_1$.

\begin{definition} \label{def:homotopy-class-bands} Let $f \colon
  B_{v,\lambda,r} \rightarrow B_{w,\mu,s}$ be a map lifted from a map
  $f'$ to the pair $(m,n)$ as in Definition \ref{def:lifted}.  We
  denote by $\homotopylayers{m}{n}{f}$ the set of $\kk$-linear
  combinations $\rho_1 g_1 + \rho_2 (\widetilde{v_1}) + \rho_3
  (\widetilde{w_1})$ of maps, such that $\rho_1 g_1 + \rho_2
  (\widetilde{v_1}) + \rho_3 (\widetilde{w_1}) \simeq f$ with $\rho_1
  g_1 \in \homotopy{f'}$ and $(\widetilde{v_1})$, $(\widetilde{w_1})$
  are the single maps $(v_1)$, $(w_1)$ lifted to pairs $(m+1,n)$ and
  $(m,n-1)$, respectively. When $m$ and $n$ are understood, we simply
  write $\homotopyband{f}$ for $\homotopylayers{m}{n}{f}$.
\end{definition}

  Note that $\rho_2 \neq 0$ if and only if the homotopy map
  passes through the link in $B_{v,\lambda,r}$; similarly for
  $\rho_3$.  Thus, the homotopy class of $f$ can be determined
  by $\homotopyband{f}$ and $\homotopy{f'}$.

\subsection{A worked example} \label{sec:example}

Before discussing the general behaviour of maps involving
higher-dimensional homotopy bands, it is useful to examine an example
in detail. This example will exhibit all possibilities regarding
lifting of maps and homotopy classes and clarify the strategy in the
proofs of the general results.

Throughout this worked example, $\Lambda$ will be given by the
following bound quiver.

\[
\begin{tikzpicture}
  \node (0) at (0,0) [smallvertex] {$0$};
  \node (1) at (0,1) [smallvertex] {$1$};
  \node (2) at (1,1) [smallvertex] {$2$};
  \node (3) at (1,0) [smallvertex] {$3$};
  \node (4) at (2,0) [smallvertex] {$4$};
  \node (5) at (2,-1) [smallvertex] {$5$};
  \node (6) at (0,-1) [smallvertex] {$6$};
  \draw [->] (0) -- node [left, tinyvertex] {$b$} (1);
  \draw [->] (1) -- node [above, tinyvertex] {$a$} (2);
  \draw [->] (3) -- node [right, tinyvertex] {$c$} (2);
  \draw [->] (0) -- node [above, tinyvertex] {$d$} (3);
  \draw [->] (3) -- node [above, tinyvertex] {$e$} (4);
  \draw [->] (6) -- node [below, tinyvertex] {$f$} (5);
  \draw [->] (5) -- node [right, tinyvertex] {$g$} (4);
  \draw [->] (6) -- node [left, tinyvertex] {$b'$} (0);
  \draw[dotted,thick] (0,0.7) arc (-90:0:8pt);
  \draw[dotted,thick] (0.7,0) arc (180:90:8pt);
  \draw[dotted,thick] (1.7,-1) arc (180:90:8pt);
  \draw[dotted,thick] (0.3,0) arc (0:-90:8pt);
\end{tikzpicture}
\]

\noindent We consider the homotopy bands $v =
(e,0,1)(c,1,0)(a,0,1)(bb',1,2)(f,2,1)(g,1,0)$ and $w = (d, i, i-1)(c,
i-1,i-2)(a,i-2,i-1)(b,i-1,i)$, where the degree $i$ will be specified
by the diagrams occurring in each example in the context of the
particular map or homotopy class we are interested in.

Our first example indicates the typical situation of lifting a singleton homotopy class.

\begin{example}\label{ex:bands-example1}
  The candidate map is a graph map $h' \in
  \Hom(B_{w,\lambda,1},B_{v,\mu,1})$ lifted to the pair $(1,1)$ of
  layers in $\Hom(B_{w,\lambda,2},B_{v,\mu,1})$.  The components of
  the lifted map $h$ which are forced by passing through the link are
  drawn as broken lines (they are all identities).
  
  \[
  \xymatrix@=1.4pc{ \cdots & \xydot \ar[l]_-{d}
    \ar@{-->}@/_1pc/[dd]_(0.2){1} & \xydot \ar[l]_-{c} \ar[r]^-{a}
    \ar@{-->}@/_1pc/[dd]_(0.2){1} & \xydot \ar[r]^-{\lambda b}
    \ar[dr]^-{b} \ar@{-->}@/_1pc/[dd]_(0.2){1} &\xydot & \cdots
    \ar[l]_-{d}
    & {\small\text{layer 2 of }B_{w,\lambda,2}}\\
    \cdots & \xydot \ar[l]_-{d} \ar@{=}[d]^{\lambda} & \xydot
    \ar[l]_-{c} \ar[r]^-{a} \ar@{=}[d]^{\lambda} & \xydot
    \ar[r]^-{\lambda b} \ar@{=}[d]^{\lambda} & \xydot \ar[d]^-{b'}
    & \cdots \ar[l]_-{d} & {\small\text{layer 1 of }B_{w,\lambda,2}}\\
    \cdots \ar[r]_-{e} & \xydot & \xydot \ar[l]^-{c} \ar[r]_-{a} &
    \xydot \ar[r]_-{bb'} & \xydot & \cdots \ar[l]^-{f} & {\small B_{v,\mu,1}}\\
  }
  \]
  Note that $h'$ also lifts to a map which includes a copy of $h'$
  from layer 2 of $B_{w,\lambda,2}$ to the unique layer of
  $B_{v,\mu,1}$. Thus, the graph map $h'$ lifts to two maps in
  $\Hom_{\sD}(B_{w,\lambda,2},B_{v,\mu,1})$.
\end{example}

Our next example examines the case of lifting a non-singleton homotopy
class. Recall the notation for homotopy equivalent basis maps in
Definition~\ref{def:homotopy-class-bands}.

\begin{example}\label{ex:bands-example2}
  We consider the homotopy set $\homotopy{(b)} =
  \{(b),-(\frac{1}{\mu}a),(\frac{1}{\mu}c),-(\frac{1}{\mu}e,\frac{1}{\mu}d)
  \}$ in $\Hom_{\sC}(B_{v,\lambda,1},B_{w,\mu,1})$, contributing one
  map in $\Hom_{\sD}(B_{v,\lambda,1},B_{w,\mu,1})$, which is indicated
  in the following diagram (drawn without signs or scalars).
 \[
  \xymatrix@=1.4pc{ \cdots \ar[r]^{bb'} & \xydot & \xydot \ar[l]_f &
    \xydot \ar[l]_{\lambda g} \ar[r]^e \ar[d]_<<e & \xydot \ar@{=>}[dl]
    \ar[dll]_>>>>d & \xydot \ar[l]_c \ar[d]^(0.4)a \ar@{=>}[dl]
    \ar[r]^a \ar[dll]^c & \xydot \ar[r]^{bb'} \ar[d]^b \ar@{=>}[dl] &
    \xydot & \xydot \ar[l]_f & \cdots \ar[l]_{\lambda g}
    \\
    \cdots \ar[r]_a & \xydot \ar[r]_{\mu b} & \xydot & \xydot \ar[l]^d
    & \xydot \ar[l]^c \ar[r]_a &\xydot \ar[r]_{\mu b} &\xydot & \xydot
    \ar[l]^d & \xydot \ar[l]^c \ar[r]_a & \cdots 
    \\
  }
  \]

  We shall now lift $\homotopy{(b)}$ to
  $\Hom_{\sD}(B_{v,\lambda,1},B_{w,\mu,2})$.  From
  Lemma~\ref{lem:nolink}\eqref{item:single} it is clear that each map
  in $\homotopy{(b)}$ gives rise to two maps in
  $\Hom_{\sC}(B_{v,\lambda,1},B_{w,\mu,2})$ without adding any extra
  components.  For convenience, decorate maps to the first layer of
  $B_{w,\mu,2}$ with a tilde, i.e. $\tilde{b}$ and so on, and maps to
  the second layer with a hat, i.e. $\hat{b}$ and so on.

  By Lemma~\ref{lem:nolink}, we get $\homotopyband{(\tilde{b})}=
  \{(\tilde{b}),(-\frac{1}{\mu}\tilde{a}),(\frac{1}{\mu}\tilde{c}),
  -\frac{1}{\mu}(\tilde{e},\tilde{d})\}$.  We use the following
  diagram to determine $\homotopyband{(\hat{b})}$.
  \[
  \xymatrix@=1.8pc{\cdots \ar[r]^{bb'} & \xydot & \xydot \ar[l]_f &
    \xydot \ar[l]_{\lambda g} \ar[r]^e & \xydot & \xydot \ar[l]_c
    \ar[d]_(0.4){\frac{1}{\mu}a} \ar[r]^a & \xydot \ar[r]^{bb'}
    \ar[d]_{\hat b} \ar@{=>}[dl]_{\frac{1}{\mu}}
    \ar@/^1pc/@{-->}[dd]^(0.2){\frac{1}{\mu} \tilde b} & \xydot &
    \xydot \ar[l]_f & \cdots
    \ar[l]_{\lambda g} & {\small B_{v,\lambda,1}}\\
    \cdots \ar[r]^a & \xydot \ar[dr]_{b} \ar[r]^{\mu b} & \xydot &
    \xydot \ar[l]_-d & \xydot \ar[l]_c \ar[r]^a &\xydot \ar[r]^{\mu b}
    \ar[dr]_{b} &\xydot & \xydot \ar[l]_d & \xydot
    \ar[l]_c \ar[r]^a & \cdots & {\small\text{layer 2 of }B_{w,\mu,2}}\\
    \cdots \ar[r]_a & \xydot \ar[r]_{\mu b} & \xydot & \xydot \ar[l]^d
    & \xydot \ar[l]^c \ar[r]_a &\xydot \ar[r]_{\mu b} &\xydot & \xydot
    \ar[l]^d & \xydot \ar[l]^c \ar[r]_a & \cdots
    & {\small\text{layer 1 of }B_{w,\mu,2}}\\
  }
  \]
  The homotopy passes through the link, so we have $(\hat b) \simeq
  (-\frac{1}{\mu} \hat a) - (\frac{1}{\mu}\tilde{b}$).  It is easy to
  see that $(\hat a) \simeq (-\hat c) \simeq (\hat e, \hat d) \simeq
  (-\mu b)$.  Therefore, we determine $\homotopyband{(\hat b)}$ and
  $\homotopyband{(\hat a)}$ as follows:
\begin{align*}
\homotopyband{(\hat b)} & =
  \{ (\hat b), -(\frac{1}{\mu} \hat a) - (\frac{1}{\mu}\tilde{b}),
  (\frac{1}{\mu} \hat c) - (\frac{1}{\mu}\tilde{b}), -\frac{1}{\mu}
  (\hat e, \hat d) - (\frac{1}{\mu}\tilde{b}) \} \\
  \homotopyband{(\hat a)} & = 
  \{ (\hat a),  -(\hat c),  (\hat{e},\hat{d}), 
  - (\mu \hat{b}) - (\tilde{b})
  \} \, .
\end{align*}
  
Now, we have three different homotopy classes, but each of them is a
$\kk$-linear combination of the other two.  This shows that such a
homotopy class lifts to two homotopy classes of maps in
$\Hom_{\sC}(B_{v,\lambda,1},B_{w,\mu,2})$, thus giving rise to two
maps in $\Hom_{\sD}(B_{v,\lambda,1},B_{w,\mu,2})$.
\end{example}

In the next example we look at what happens when one tries to lift an
isomorphism. The following example shows that one cannot lift an
identity morphism on a one-dimensional homotopy band to every pair of
layers $(m,n)$.

\begin{example}\label{ex: bands-example3}
  Consider the homotopy band $w$ above and let $\lambda \in \kk^*$.
  Taking a copy of the identity from layer 1 of $B_{w,\lambda, 2}$ to
  $B_{w,\lambda, 1}$, we are forced to take the dashed components as
  before. However, once we reach the end of the band we must add
  dashed arrows to the right-hand side of the link as well:
\[
\xymatrix@R=1.5pc{
  \cdots & \xydot \ar[l]_-{d} \ar@{-->}@/^1pc/[dd]_(0.2){\frac{1}{\lambda}}
  & \xydot \ar[l]_-{c} \ar[r]^-{a} \ar@{-->}@/^1pc/[dd]_(0.2){\frac{1}{\lambda}} &
  \xydot \ar[r]^-{\lambda b} \ar[dr]^-{b}
  \ar@{-->}@/^1pc/[dd]_(0.2){\frac{1}{\lambda}}
  &\xydot \ar@{-->}@/^1.25pc/[dd]_(0.2){\frac{1}{\lambda}}  & \cdots \ar[l]_-{d}
  & {\small\text{layer 2 of }B_{w,\lambda,2}}\\
  \cdots & \xydot \ar[l]_-{d} \ar@{=}[d] & \xydot \ar[l]_-{c}
  \ar[r]^-{a} \ar@{=}[d] &
  \xydot \ar[r]^-{\lambda b} \ar@{=}[d] & \xydot \ar@{=}[d]
  & \cdots \ar[l]_-{d} & {\small\text{layer 1 of }B_{w,\lambda,2}}\\
  \cdots  & \xydot \ar[l]_-{d}& \xydot \ar[l]^-{c} \ar[r]_-{a} &
  \xydot \ar[r]_-{\lambda b} & \xydot & \cdots \ar[l]^-{d} & {\small B_{w,\lambda,1}}\\
}
\] 
but then the square involving the link does not commute: we have $2b +
\lambda b = b +\lambda b$. Therefore, we cannot lift a copy of the
identity to layer 1 in $B_{w,\lambda,1}$ and get a well-defined map of
homotopy band complexes.
\end{example}

In the final example of this section, we consider the homotopy class
arising from the identity map on a one-dimensional band complex.

\begin{example}\label{ex:shift}
  Consider the complex $B_{w,\lambda,1}$.  The identity map on
  $B_{w,\lambda,1}$ gives rise to a homotopy class in
  $\Hom_{\sD}(B_{w,\lambda,1},\Sigma B_{w,\lambda,1})$ which is
  non-zero:
  \[
  \xymatrix@R=1.5pc{
    \cdots \ar[r]^-{\lambda b} & \xydot \ar@{=>}[dl] 
    & \xydot \ar@{=>}[dl] \ar[l]_-{d} \ar[dll]_{d}
    & \xydot \ar@{=>}[dl] \ar[l]_-{c} \ar[r]^-{a} \ar[dll]_{c} \ar[d]^a
    & \xydot \ar@{=>}[dl] \ar[r]^-{\lambda b} \ar[d]^<<{\lambda b}
    &\xydot \ar@{=>}[dl]  & \cdots \ar[l]_-{d}
    & {\small B_{w,\lambda,1}}\\
    \cdots & \xydot \ar[l]^-{d} & \xydot \ar[l]^-{c}
    \ar[r]_-{a} &
    \xydot \ar[r]_-{\lambda b} & \xydot & \xydot \ar[l]_-{d} &
    \ar[l]_-{c} \cdots
    & {\small \Sigma B_{w,\lambda,1}}\\
  }
  \]
  We have $\homotopy{(b)} =
  \{(b),-\lambda^{-1}(a),\lambda^{-1}(c),-\lambda^{-1}(d),(b)\}$,
  where we have written the $(b)$ twice to emphasise the following key
  point: this gives a non-trivial way in which to obtain the
  tautologous homotopy equivalence $b - b \simeq 0$. Note that there
  can be other (homotopy classes of) maps in
  $\Hom(B_{w,\lambda,1},\Sigma B_{w,\lambda,1})$; these behave as in
  Examples \ref{ex:bands-example1} and \ref{ex:bands-example2}.
\end{example}

\subsection{Maps which are not self-extensions}

In this section we shall make a number of statements regarding
dimensions of Hom spaces. It is useful to first set up some
notation. Let $\Rad_{\sD}(\cxP,\cxQ)$ denote the space of
non-isomorphisms $\cxP \to \cxQ$. Following standard notation in
algebraic geometry, we write
\[
\hom_{\sD}(\cxP,\cxQ) = \dim \Hom_{\sD}(\cxP,\cxQ)
\quad \text{and} \quad
\rad_{\sD}(\cxP,\cxQ) = \dim \Rad_{\sD}(\cxP,\cxQ).
\]

We now state the main result of this section.

\begin{theorem}\label{thm:homotopy-bands}  
Let $r, s \in \bN$, $w, v \in \bands $, $u \in \strings$ and $\lambda, \mu \in \kk^*$ be such that
$B_{v,\lambda,1} \ncong \Sigma  B_{w,\mu,1}$.  Let $\delta_{B_{v,\lambda,1},B_{w,\mu,1}}$ be the Kronecker delta. Then
\begin{enumerate}[label=(\arabic*)]
\item $\hom_{\sD}(B_{v,\lambda,r},B_{w,\mu,s}) = \min\{r,s\} \cdot \delta_{B_{v,\lambda,1},B_{w,\mu,1}} + rs \cdot \rad(B_{v,\lambda,1},B_{w,\mu,1})$;
\item $\hom_{\sD}(B_{v,\lambda,r},P_u) = r \cdot \hom_{\sD}(B_{v,\lambda,1},P_u)
\ \text{and} \
\hom_{\sD}(P_u,B_{v,\lambda,r}) = r \cdot \hom_{\sD}(P_u,B_{v,\lambda,1})$.
\end{enumerate}
\end{theorem}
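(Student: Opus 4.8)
The plan is to reduce all the statements to the one-dimensional case via the lifting machinery of Section~\ref{sec:higher-defn}, using Lemmas~\ref{lem:differentialhigher}, \ref{lem:nolink} and the worked examples of Section~\ref{sec:example} as a template. The key observation is that by Lemma~\ref{lem:differentialhigher} every map $f \in \Hom_{\sD}(B_{v,\lambda,r},B_{w,\mu,s})$ is lifted from some $f' \in \Hom_{\sD}(B_{v,\lambda,1},B_{w,\mu,1})$ to a pair of layers $(m,n)$, so that $\basisD{v}{w}$ (which, since $v,w$ are bands, consists of graph maps, quasi-graph map representatives, singleton single and singleton double maps) provides the building blocks. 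For each basis map $f'$ and each pair $(m,n)$ one obtains a candidate map, and the homotopy analysis is governed by $\homotopyband{f}$ from Definition~\ref{def:homotopy-class-bands}: a candidate passes through a link precisely when a component composes non-trivially with the link homotopy letter $v_1$ (for the source band) or $w_1$ (for the target band), and Lemma~\ref{lem:nolink} tells us exactly when this cannot happen.

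First I would handle part~(2), which is the easy case. If $u\in\strings$ then $P_u$ has no link arrows, so any candidate map lifted to a pair $(m,1)$—there is only one layer in $P_u$—never passes through a link on the target side, and by Lemma~\ref{lem:nolink}\eqref{item:single} and the fact that a graph/quasi-graph-representative map into a string complex cannot ``wrap around'', it does not pass through a link on the source side either (here one must check that the pathological wrap-around of Remark on the identity map occurs only when the target is also a band). Hence each element of $\basisD{v}{w}$-type basis for $\Hom_{\sD}(B_{v,\lambda,1},P_u)$ lifts to exactly $r$ independent maps, one for each source layer $m\in\{1,\dots,r\}$, and no homotopy relations are introduced between different layers; this gives $\hom_{\sD}(B_{v,\lambda,r},P_u)=r\cdot\hom_{\sD}(B_{v,\lambda,1},P_u)$, and dually for $\Hom_{\sD}(P_u,B_{v,\lambda,r})$ using layers $n\in\{1,\dots,s\}$ with $s=r$.

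Next I would treat part~(1). Fix a basis map $f'$ for $\Hom_{\sD}(B_{v,\lambda,1},B_{w,\mu,1})$. If $f'$ lies in $\Rad_{\sD}$, then I claim $f'$ lifts to exactly $rs$ independent maps $B_{v,\lambda,r}\to B_{w,\mu,s}$, one candidate for each pair $(m,n)$, and that the homotopy relations forced by links (captured by $\homotopyband{f}$) never collapse two of these candidates: this is precisely the phenomenon illustrated in Example~\ref{ex:bands-example2}, where a non-singleton homotopy class lifted to $B_{w,\mu,2}$ still produced two independent classes, the link contribution being a shift between adjacent layers rather than an identification. One should verify in general that the map $(m,n)\mapsto[f'\text{ lifted to }(m,n)]$ is injective modulo homotopy whenever $f'$ is not an isomorphism, using that passing through a link merely adds components to strictly higher source layers or strictly lower target layers (Lemma~\ref{lem:nolink}\eqref{item:m=r},\eqref{item:n=1}), so a triangular/filtration argument on the layer index separates the classes. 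Summing over a basis of $\Rad_{\sD}(B_{v,\lambda,1},B_{w,\mu,1})$ gives the $rs\cdot\rad$ term. It remains to account for isomorphisms: $\Hom_{\sD}(B_{v,\lambda,1},B_{w,\mu,1})$ contains an isomorphism iff $B_{v,\lambda,1}\cong B_{w,\mu,1}$, i.e. iff $\delta_{B_{v,\lambda,1},B_{w,\mu,1}}=1$, and in that case the extra basis element is (a scalar multiple of) the identity graph map that ``travels the whole way around the band''. Example~\ref{ex: bands-example3} shows this identity does \emph{not} lift to every pair $(m,n)$: the link-square fails to commute unless we are in a configuration where the forced dashed components cancel, which happens exactly for the $\min\{r,s\}$ pairs $(m,n)$ with $m-n$ constant along a maximal diagonal (equivalently, $(1,s-r+1),\dots$ when $r\le s$, and dually), yielding the $\min\{r,s\}\cdot\delta$ term. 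I would make the precise count by a direct linear-algebra computation on the block-bidiagonal link matrices $A^i$, exactly as in Example~\ref{ex: bands-example3}.

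The main obstacle I anticipate is the bookkeeping in part~(1) showing that the $rs$ candidates coming from a radical map $f'$ remain linearly independent \emph{after} passing to homotopy classes—i.e. that no nontrivial combination $\sum_{m,n}\kappa_{mn}(f'\text{ lifted to }(m,n))$ is null-homotopic. The hypothesis $B_{v,\lambda,1}\ncong\Sigma B_{w,\mu,1}$ is exactly what rules out the degenerate ``shift'' homotopy of Example~\ref{ex:shift} interfering here; with that excluded, the homotopies available are those of $\homotopyband{f}$, whose link-terms $(\widetilde{v_1})$, $(\widetilde{w_1})$ are supported on layers $(m+1,n)$ and $(m,n-1)$, strictly outside the ``diagonal'' of the candidate, so an induction on the number of layers—peeling off the extreme layer $m=r$ (where Lemma~\ref{lem:nolink}\eqref{item:m=r} forbids source links) or $n=1$—reduces independence for $(r,s)$ to independence for $(r-1,s)$ and $(r,s-1)$, grounding in the one-dimensional Theorem~\ref{thm:main}. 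Assembling these pieces and adding the string-complex cases of part~(2) completes the proof.
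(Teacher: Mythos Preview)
Your approach is essentially the same as the paper's: reduce to the one-dimensional basis via Lemma~\ref{lem:differentialhigher}, lift each basis map to all pairs of layers, and then control how links and homotopies interact. The paper organises this into Lemmas~\ref{lem:singleton-lift}--\ref{lem:iso-bands}, but the architecture matches yours.

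Two points need correcting. First, in part~(2) your claim that a basis map $B_{v,\lambda,1}\to P_u$ ``does not pass through a link on the source side'' is false: a graph map whose overlap straddles the position of $v_1$ certainly composes non-trivially with the source link (this is exactly the phenomenon in Example~\ref{ex:bands-example1}, and it occurs just as well with a string target). What is true is that the forced extra components land only in strictly higher source layers, so the $r$ lifts to layers $1,\ldots,r$ form a triangular system and are automatically independent. You should replace the ``no link passing'' claim with this triangularity observation.

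Second, your identification of which diagonals carry a lift of the identity is off. The paper shows (Lemma~\ref{lem:iso-bands}) that the identity lifted to $(m,n)$ coincides with the identity lifted to $(m\pm1,n\pm1)$, and that the obstruction is at $(m,s)$ for $m\neq r$ and at $(1,n)$ for $n\neq1$; hence the $\min\{r,s\}$ valid lifts are those whose diagonals reach the top at $m=r$ and the bottom at $n=1$, i.e.\ they project from the top-most source layers onto the bottom-most target layers. Your sample point $(1,s-r+1)$ lies on a forbidden diagonal.

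For the linear-independence step in part~(1), your peeling-off-a-layer induction is a legitimate alternative to the paper's method, which instead rewrites every lifted representative as a combination of a single fixed representative $f_q^{(i,j)}$ across all layers (Lemma~\ref{lem:lift-lin-comb}) and then checks directly that $\sum\alpha_{i,j}f_q^{(i,j)}\simeq0$ forces all $\alpha_{i,j}=0$ using the quasi-graph-map endpoint conditions. Both arguments rely on the same structural fact you isolate: link-passing only moves support to layers $(m',n')$ with $m'\ge m$ and $n'\le n$.
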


We now prove the first assertion of Theorem~\ref{thm:homotopy-bands}
in a sequence of lemmas; the second assertion is proved
similarly. From now on assume that $v, w \in \bands$, $\lambda, \mu
\in \kk^*$ and $B_{v,\lambda,1} \ncong \Sigma B_{w,\mu,1}$.

\begin{lemma}\label{lem:singleton-lift}
  Let $0\neq f' \in \Hom_{\sD}(B_{v,\lambda,1},B_{w,\mu,1})$. If
  $\homotopy{f'} = \{f'\}$ and $f'$ is not an isomorphism, then $f'$
  can be lifted to any pair of layers in $B_{v,\lambda,r}$ and
  $B_{w,\mu,s}$.
\end{lemma}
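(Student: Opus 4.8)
The plan is to show that the only obstructions to lifting arise from components passing through a link, and then use the hypotheses $\homotopy{f'} = \{f'\}$ and $f'$ not an isomorphism to rule these out. First I would fix a pair of layers $(m,n)$, place a copy of $f'$ between layer $m$ of $B_{v,\lambda,r}$ and layer $n$ of $B_{w,\mu,s}$, and form the candidate map as in Section~\ref{sec:higher-defn}. I need to check that this candidate is a well-defined chain map of the higher-dimensional band complexes; the only squares whose commutativity is not inherited from $f'$ being a chain map $B_{v,\lambda,1}\to B_{w,\mu,1}$ are those involving a link arrow (the off-diagonal blocks $A^i$ of $D^i_{w,\mu,s}$ and $D^i_{v,\lambda,r}$). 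So the entire argument reduces to analysing what happens at the two links.

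Next I would invoke Lemma~\ref{lem:nolink}: if $f'$ is a single map it does not pass through any link, so the candidate map uses only the pair $(m,n)$ and is automatically a well-defined map, proving the lemma in that case. The remaining case is when $f'$ is a graph map (since $\homotopy{f'}=\{f'\}$ and $f'$ is not a single map forces $f'\in\graph{v}{w}$ by Lemma~\ref{lem:homotopy-graph} and the classification in Section~\ref{sec:singletons}, recalling $f'$ is not an isomorphism). A graph map may pass through a link via one of its isomorphism components composed with the link homotopy letter $v_1$ or $w_1$; following the unfolded diagram as in Example~\ref{ex:bands-example1}, such a component forces a cascade of further identity components on the adjacent layer, but crucially this cascade is \emph{one-directional}: it travels away from the endpoint of the overlap and, because $f'$ is a genuine graph map (not the identity travelling the whole way around the band), the overlap region is a proper, finite arc of the band, so the cascade terminates at the endpoint conditions \LGone--\RGtwo\ rather than wrapping around. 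Hence only finitely many extra components are added, no inconsistency of the form $2b+\lambda b = b+\lambda b$ (as in Example~\ref{ex: bands-example3}) can occur, and the candidate map is well-defined.

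Finally I would note that the minimality clause of Definition~\ref{def:lifted} is automatically satisfied: the cascade of forced components is precisely the minimal set needed to make the squares at the link commute, so the map obtained is exactly the lift of $f'$ to the pair $(m,n)$. Since $(m,n)$ was arbitrary, $f'$ lifts to every pair of layers, as claimed.

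The main obstacle I anticipate is the graph-map case: one must argue carefully that the link-induced cascade cannot close up into a contradiction. This is exactly where the hypothesis that $f'$ is not an isomorphism is used — the pathological example of Remark (item~\ref{item:pathology}), the identity $B_{w,\lambda,1}\to B_{w,\lambda,1}$, is the unique graph map that travels the whole band, and it is precisely the one that fails to lift to all layers (Example~\ref{ex: bands-example3}). So the crux is to verify that every other graph map overlaps in a proper arc, making the cascade finite and consistent.
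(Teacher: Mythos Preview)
Your approach is essentially the same as the paper's: both argue via Lemma~\ref{lem:nolink}, noting that the lifted map may acquire extra components when it passes through a link but remains well defined. The paper's proof is in fact a single sentence to that effect, whereas you have spelled out the mechanism (the one-directional cascade that terminates because the overlap of a non-isomorphism graph map is a proper arc), which is exactly the content behind the paper's terse assertion.

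There is one small gap: your case split ``single map or graph map'' is not exhaustive. The hypothesis $\homotopy{f'}=\{f'\}$ together with $f'\notin\single{v}{w}$ does \emph{not} force $f'\in\graph{v}{w}$; by Lemma~\ref{lem:doubles} there is also the possibility that $f'$ is a singleton double map (Definition~\ref{def:singleton-double}). You should add a line covering this case. It is easy: a singleton double map has exactly two non-zero components, both non-stationary paths, so if either composes non-trivially with a link arrow the resulting extra component is again a non-stationary path sitting between an adjacent pair of layers, and the cascade (if any) is of the same finite, one-directional kind you already described for graph maps. With that addition your argument is complete and rather more informative than the paper's.
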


\begin{proof}
  This follows directly from Lemma~\ref{lem:nolink}, noting that the
  resulting maps may acquire extra components if they pass through a
  link.
\end{proof}

The following lemmas deal with the generic case of
Theorem~\ref{thm:homotopy-bands}; the example to bear in mind is
Example~\ref{ex:bands-example2}.

\begin{lemma} \label{lem:lift-lin-comb} Let $v$ and $w$ be homotopy
  bands, and $\lambda, \mu \in \kk^*$ such that $B_{v,\lambda,1} \neq
  \Sigma B_{w,\mu,1}$ and $B_{v,\lambda,1} \neq B_{w,\mu,1}$.  Let
  $f^{(m,n)} \colon B_{v,\lambda,1} \to B_{w,\mu,1}$ be a lift of $f
  \in \single{v}{w} \cup \double{v}{w}$, which is not null-homotopic,
  to the layers $(m,n)$. Consider the lift $g^{(m,n)}$ of any map $g
  \in \homotopy{f}$. Then $g^{(m,n)}$ is homotopic to linear
  combination of representatives from only $\homotopylayers{i}{j}{f}$
  for $i \geq m$ and $j \leq n$.
\end{lemma}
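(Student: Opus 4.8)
The plan is to work step by step along the chain of basic maps constituting $\homotopy{f}$. Recall from the proof of Proposition~\ref{prop:homotopy-classes} that, since $f$ is not null-homotopic, the elements of $\homotopy{f}$ are precisely the quasi-graph map representatives of a single quasi-graph map $Q_v \to \Sigma^{-1}Q_w$, and that they are produced by a traversal of that quasi-graph map starting from $f$ (possibly running in both of its directions): consecutive elements differ by an \emph{elementary} homotopy, one whose unique non-zero component is a path obtained by completing a single differential component --- a homotopy letter of $Q_v$ or of $Q_w$. I will prove, by induction on the number $k$ of elementary homotopies separating $g$ from $f$ inside the chain $\homotopy{f}$, that the lift $g^{(m,n)}$ is homotopic to a $\kk$-linear combination of representatives from the sets $\homotopylayers{i}{j}{f}$ with $i \geq m$ and $j \leq n$. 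For $k=0$ this is immediate: $g^{(m,n)} = f^{(m,n)}$, which is a representative of $\homotopylayers{m}{n}{f}$.

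For the inductive step, choose an element $g'$ of $\homotopy{f}$ adjacent to $g$ in the chain and at distance $k-1$ from $f$, and let $\eta$ be the elementary one-dimensional homotopy realising $g \simeq g'$, built by completing a homotopy letter $\delta$. Lift $\eta$ to the layers $(m,n)$. If $\delta$ is not a link --- that is, $\delta \neq v_1$ when $\delta$ is a letter of $v$, and $\delta \neq w_1$ when $\delta$ is a letter of $w$ --- then, because the only off-diagonal (link) entries of $D^i_{v,\lambda,r}$ and $D^i_{w,\mu,s}$ occupy the positions carrying $v_1$, respectively $w_1$, the lifted homotopy meets no link, so $g^{(m,n)} \simeq (g')^{(m,n)}$ and the inductive hypothesis for $g'$ finishes the step. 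If $\delta = v_1$, the source link, then on lifting $\eta$ to $(m,n)$ the completing path additionally composes with the link copy of $v_1$ inside $D^i_{v,\lambda,r}$, which runs from layer $m+1$ to layer $m$; computing as in Examples~\ref{ex:bands-example2} and \ref{ex:shift} one gets $g^{(m,n)} \simeq (g')^{(m,n)} + \rho\,(\widetilde{v_1})$, where $(\widetilde{v_1})$ is the single map $(v_1)$ lifted to $(m+1,n)$ and $\rho \in \kk$ is $\pm\lambda^{-1}$ times the scalar that $\delta$ carries in $\eta$. Dually, if $\delta = w_1$, the target link, then $g^{(m,n)} \simeq (g')^{(m,n)} + \rho\,(\widetilde{w_1})$ with $(\widetilde{w_1})$ the single map $(w_1)$ lifted to $(m,n-1)$. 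In either case the correction is one of the maps $(\widetilde{v_1})$, $(\widetilde{w_1})$ admitted in the definition of $\homotopylayers{m}{n}{f}$, hence a representative from $\homotopylayers{m}{n}{f}$; combining with the inductive hypothesis for $g'$ closes the induction. (Since the traversal reaches $v_1$, respectively $w_1$, precisely as the single map $\pm(v_1)$, respectively $\pm(w_1)$, these distinguished letters also lie, up to scalar, in $\homotopy{f}$, so the correction may equally be read as a representative from $\homotopylayers{m+1}{n}{f}$, respectively $\homotopylayers{m}{n-1}{f}$; either reading stays inside the prescribed range.)

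The structural fact driving the argument --- and the place where care is needed --- is that the links are \emph{one-directional}: the blocks $A^i$ occur only on the super-diagonal of $D^i_{v,\lambda,r}$ and of $D^i_{w,\mu,s}$, so composing a component with a link can only move it from source layer $i$ to source layer $i+1$, or from target layer $j$ to target layer $j-1$, never in the reverse direction. This is exactly what keeps every correction generated by the induction inside the region $\{\,i \geq m,\ j \leq n\,\}$, and hence expresses the homotopy class of $g$ at the layers $(m,n)$ entirely through the $\homotopylayers{i}{j}{f}$ with $i \geq m$ and $j \leq n$. The main obstacle is the link-passing case: one must check that the component forced by the link really is a scalar multiple of $(\widetilde{v_1})$ or $(\widetilde{w_1})$ rather than a more complicated map, follow the scalar $\rho$ correctly through the coefficients $\lambda$, $\mu$ attached to the distinguished letters $v_1$, $w_1$, and confirm that a double map sitting at an end of the chain $\homotopy{f}$ obeys the same analysis --- the explicit computations of Examples~\ref{ex:bands-example1}--\ref{ex:shift} are the model for all of this.
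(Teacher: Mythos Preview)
Your proof is correct and follows essentially the same approach as the paper: both trace the chain $\homotopy{f}$ of quasi-graph map representatives and observe that the only extra terms introduced on lifting an elementary homotopy come from the link letters $v_1$ and $w_1$, which push the source layer up by one or the target layer down by one, so all corrections stay in the region $i \geq m$, $j \leq n$. The paper's version differs only cosmetically --- it fixes $f$ to be the leftmost representative $f_1$, marks the positions $p,q$ where the source and target links sit inside the chain, and writes down explicit closed formulas for the lifted elements $\tilde{f}_t^{(i,j)}$ of $\homotopylayers{i}{j}{f_1}$, from which the required linear combinations are read off directly rather than produced step by step.
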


\begin{proof}
  Choose the quasi-graph map in $\quasi{v}{w}$ of which $f$ is a
  representative. Starting from the left of the quasi-graph map, we
  write down the homotopy set
\[
\homotopy{f_1} = \{f_1,\ldots,\lambda f_p, \ldots,\lambda \mu f_q,
\ldots, \lambda \mu f_k\}.
\]
Without loss of generality, we may assume $f=f_1$; taking a different
choice simply requires multiplication by the appropriate scalar. Note
that our definition above places the link in the source band at $f_p$
and in the target band at $f_q$ with $p<q$. Other choices of $p$ and
$q$ can be considered analogously. We consider the following lifts of
$\homotopy{f_1}$.
\[
\homotopylayers{i}{j}{f_1}  =  \{\tilde{f}_1^{(i,j)},\ldots,\tilde{f}_p^{(i,j)},\ldots,\tilde{f}_q^{(i,j)}, \ldots, \tilde{f}_k^{(i,j)}\}
\]
where
\[
\tilde{f}_t^{(i,j)} \coloneqq
\left\{
\begin{array}{ll}
f_t^{(i,j)} & \text{ for } 1 \leq t < p; \\
\lambda f_t^{(i,j)} + f_p^{(i+1,j)} & \text{ for } p \leq t < q; \\
\lambda \mu f_t^{(i,j)} + f_p^{(i+1,j)} + f_q^{(i,j-1)} & \text{ for } q \leq t \leq k,
\end{array}
\right.
\]
where we interpret $f_p^{(i+1,j)} = 0$ for $i =r$ and $f_q^{(i,j-1)} =
0$ for $j = 1$.

The problem is to write the maps $f_t^{(i,j)}$ as linear combinations
of maps from these sets. For $f_t^{(i,j)}$ with $1 \leq t < p$, there
is nothing to show. For $f_t^{(i,j)}$ with $p \leq t < q$, we can
write this map as a linear combination of maps from the sets
$\homotopylayers{i}{j}{f_1}, \ldots, \homotopylayers{r}{j}{f_1}$. For
$q \leq t \leq k$, we get $f_t^{(i,j)}$ as a linear combination of
representatives from the sets as above and
$\homotopylayers{i}{j-1}{f_1},\ldots,\homotopylayers{i}{1}{f_1}$, as
required.
\end{proof}

\begin{lemma}\label{lemma: non-shifted, non-iso bands}
  Let $v$ and $w$ be homotopy bands, and $\lambda, \mu \in \kk^*$ such
  that $B_{v,\lambda,1} \neq \Sigma B_{w,\mu,1}$ and $B_{v,\lambda,1}
  \neq B_{w,\mu,1}$.  Then $ \hom_{\sD}(B_{v,\lambda,r}, B_{w,\mu,s})
  = rs \cdot \hom_{\sD}(B_{v,\lambda,1}, B_{w,\mu,1}) $.
\end{lemma}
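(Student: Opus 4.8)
The plan is to count a basis for $\Hom_{\sD}(B_{v,\lambda,r},B_{w,\mu,s})$ by lifting the canonical basis elements of $\Hom_{\sD}(B_{v,\lambda,1},B_{w,\mu,1})$ from Theorem~\ref{thm:main}, using the structural results just established. By Lemma~\ref{lem:differentialhigher}, every map $f \colon B_{v,\lambda,r}\to B_{w,\mu,s}$ is lifted from some $f'\in\Hom_{\sD}(B_{v,\lambda,1},B_{w,\mu,1})$ to a pair of layers $(m,n)$; so it suffices to fix a basis element $f'$ of $\Hom_{\sD}(B_{v,\lambda,1},B_{w,\mu,1})$ and count how many linearly independent (homotopy classes of) maps of $\Hom_{\sD}(B_{v,\lambda,r},B_{w,\mu,s})$ lift it, then sum over the $\hom_{\sD}(B_{v,\lambda,1},B_{w,\mu,1})$ basis elements. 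Since $B_{v,\lambda,1}\neq B_{w,\mu,1}$, none of these basis elements is an isomorphism, so the pathological case of Remark~\ref{item:pathology} and the situation of Example~\ref{ex: bands-example3} do not arise; and since $B_{v,\lambda,1}\neq\Sigma B_{w,\mu,1}$ the quasi-graph map $B_{v,\lambda,1}\to\Sigma^{-1}B_{w,\mu,1}$ pathology of Example~\ref{ex:shift} is likewise excluded. The claim will then be that each basis element lifts to exactly $rs$ linearly independent maps.

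The argument splits according to the type of basis element $f'$, mirroring the four cases of $\basisD{v}{w}$. First, if $f'$ has singleton homotopy class and is not an isomorphism — this covers graph maps (Lemma~\ref{lem:homotopy-graph}) and singleton single/double maps — then by Lemma~\ref{lem:singleton-lift} it lifts freely to every one of the $rs$ pairs $(m,n)$, and these $rs$ lifts are linearly independent because their non-zero components occupy distinct layers (the lift to $(m,n)$ has a copy of $f'$ between layers $m$ and $n$, and any two of them differ in some component there); conversely every map lifting $f'$ is determined by its non-zero components, hence by the pair $(m,n)$ of a chosen component, so there are no further lifts. This gives the $rs$ count for the singleton-homotopy-class contributions. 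Second, if $f'$ lies in a non-singleton homotopy class, i.e. $f'$ is a single or double map with $\homotopy{f'}\neq\{f'\}$ corresponding to a quasi-graph map $B_{v,\lambda,1}\to\Sigma^{-1}B_{w,\mu,1}$, the relevant statement is Lemma~\ref{lem:lift-lin-comb}: the lift $f'^{(m,n)}$ to layers $(m,n)$, together with the extra link-generated components $(\widetilde{v_1})$ to $(m+1,n)$ and $(\widetilde{w_1})$ to $(m,n-1)$ recorded in $\homotopyband{f'}$, is homotopic to a linear combination of representatives drawn only from $\homotopylayers{i}{j}{f'}$ with $i\geq m$, $j\leq n$. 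This says precisely that, modulo homotopy, a lift to $(m,n)$ does not escape the "upper-left" quadrant of layers, so the $rs$ lifts indexed by $(m,n)\in\{1,\dots,r\}\times\{1,\dots,s\}$ span the same space as the honest homotopy classes, and a triangularity argument in the partial order $(i,j)\leq(i',j')\iff i\geq i', j\leq j'$ shows they are independent: the lift to $(m,n)$ is the unique one with a non-zero component on layer pair $(m,n)$ and nothing strictly above-and-left of it.

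Assembling the two cases, for every basis element of $\Hom_{\sD}(B_{v,\lambda,1},B_{w,\mu,1})$ we obtain exactly $rs$ linearly independent (homotopy classes of) maps in $\Hom_{\sD}(B_{v,\lambda,r},B_{w,\mu,s})$, and every map of the latter arises this way by Lemma~\ref{lem:differentialhigher}; linear independence across distinct base basis elements is inherited since distinct basis elements of the one-dimensional Hom space have disjoint supports of non-zero components (gentleness, as in the proof of Proposition~\ref{prop:basis}), and this disjointness persists after lifting. Hence $\hom_{\sD}(B_{v,\lambda,r},B_{w,\mu,s}) = rs\cdot\hom_{\sD}(B_{v,\lambda,1},B_{w,\mu,1})$.

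I expect the main obstacle to be the linear-independence/spanning bookkeeping in the non-singleton case: one must check carefully that the $rs$ chosen lifts $f'^{(m,n)}$ genuinely form a basis of the space of lifts modulo homotopy, rather than an overdetermined or underdetermined set — this is where Lemma~\ref{lem:lift-lin-comb} does the real work, and the delicate point is that passing through a link forces the homotopy to pick up the $\rho_2(\widetilde{v_1})$ and $\rho_3(\widetilde{w_1})$ terms, so one is working modulo a nontrivial subspace and must verify the resulting quotient has dimension exactly $rs$ per base element. The worked computation in Example~\ref{ex:bands-example2}, where one homotopy class lifts to two classes for $(r,s)=(1,2)$ via the three-way linear dependence among $\homotopyband{(\hat b)}$, $\homotopyband{(\hat a)}$ and the shifted copies, is the prototype to generalise.
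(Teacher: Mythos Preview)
Your approach is essentially the paper's: reduce via Lemma~\ref{lem:differentialhigher} to counting lifts of one-dimensional basis elements, dispatch singleton homotopy classes by Lemma~\ref{lem:singleton-lift}, and handle the non-singleton (quasi-graph-map) classes through Lemma~\ref{lem:lift-lin-comb}. The structure and the case split are the same.

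The one place where your sketch is looser than the paper is the linear-independence step for non-singleton classes. Your triangularity argument (``the lift to $(m,n)$ is the unique one with a non-zero component on layer pair $(m,n)$ and nothing strictly above-and-left'') establishes independence of the lifts $f'^{(m,n)}$ as chain maps in $\sC$, but it does not by itself show they remain independent modulo homotopy; a null-homotopic linear combination need not have vanishing components layer by layer. The paper closes this gap by a two-step argument. First it shows, inductively on the source layer $m$ from $r$ downward, that every lift is homotopic to a $\kk$-linear combination of the specific single maps $f_q^{(i,j)}$ (one fixed representative per pair of layers). Second, it argues that $\sum_{i,j}\alpha_{i,j} f_q^{(i,j)} \simeq 0$ forces all $\alpha_{i,j}=0$ by running the algorithm of Proposition~\ref{prop:homotopy-classes}: since the underlying quasi-graph map fails the graph-map endpoint conditions, none of the null-homotopy conditions \None--\Nthree\ can be met, and the required zeros drop out. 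This is exactly the ``check carefully'' you flag in your final paragraph; the paper's device of first passing to a single canonical representative $f_q^{(i,j)}$ per layer pair is what makes the independence computation tractable.
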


\begin{proof}
  By Lemmas~\ref{lem:differentialhigher} and \ref{lem:singleton-lift},
  we need only check that we have no more than $rs$ linearly
  independent non-singleton homotopy classes. Suppose $f \in
  \single{v}{w} \cup \double{v}{w}$ is such that $\homotopy{f} \neq
  \{f\}$. If the quasi-graph map determining the representatives of
  $\homotopy{f}$ does not pass through the link then it is clear that
  $f$ contributes $rs$ homotopy classes of maps to
  $\Hom_{\sD}(B_{v,\lambda,r},B_{w,\mu,s})$. So we may assume that the
  quasi-graph map passes through the link in one of the two homotopy
  bands. Here we assume we are in the generic situation where the
  quasi-graph map passes through the link in both bands.

  Consider the homotopy sets and notation as in the statement and
  proof of Lemma~\ref{lem:lift-lin-comb} above.
  Lemma~\ref{lem:lift-lin-comb} shows that any lift of a single or
  double map homotopic to $f$ is a linear combination of
  representatives from $\homotopylayers{i}{j}{f_1}$ for $1\leq i \leq
  r$ and $1 \leq j \leq s$. We just need to establish linear
  independence.

  We claim that every lift of a map in $\homotopy{f}$ is homotopic to
  a linear combination of the $f_q^{(i,j)}$.  First note that any
  representative in the $\homotopylayers{r}{j}{f_1}$ is homotopic to a
  linear combination of the $f_q^{(r,j)}$ for $1 \leq j \leq s$ since
  $f_p^{(r+1,j)}=0$ for each $j$. Now consider any representative in
  the $\homotopylayers{r-1}{j}{f_1}$. Each of these is homotopic to a
  linear combination of the maps $f_q^{(r-1,j)}$, $f_q^{(r,j)}$ and
  $f_p^{(r,j)}$ for $1 \leq j \leq s$. However, we have just shown
  that $f_p^{(r,j)}$ is homotopic to a linear combination of the maps
  $f_q^{(r,j)}$. Continuing in this way gives the claim.

  Thus, to get linear independence, it is sufficient to show that if
  $\sum_{i,j} \alpha_{i,j} f_q^{(i,j)} \simeq 0$ then $\alpha_{i,j}=0$
  for $1 \leq i \leq r$ and $1 \leq j \leq s$. This can be seen by
  carrying out the algorithm given in the proof of
  Proposition~\ref{prop:homotopy-classes}: the required zeros are
  obtained by the fact that the quasi-graph maps do not satisfy the
  required null-homotopic (= graph map) endpoint conditions.
\end{proof}

Next we deal with special case $B_{v,\lambda,1} = B_{w,\mu,1}$.

\begin{lemma} \label{lem:iso-bands}
Let $r$ and $s$ be positive integers, $v$ a homotopy band and $\lambda \in \kk^*$.  Then
  \[
  \hom_{\sD}(B_{v,\lambda,r},B_{v,\lambda,s}) = \min\{r,s\} + rs \cdot \rad(B_{v,\lambda,1},B_{v,\lambda,1}).
  \]
\end{lemma}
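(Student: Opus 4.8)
The plan is to adapt the architecture of the proof of Lemma~\ref{lemma: non-shifted, non-iso bands}, the one new feature being the behaviour of lifts of the identity morphism. Write $d\coloneqq\rad_{\sD}(B_{v,\lambda,1},B_{v,\lambda,1})$, so that $\Hom_{\sD}(B_{v,\lambda,1},B_{v,\lambda,1})=\kk\cdot\id\oplus\Rad_{\sD}(B_{v,\lambda,1},B_{v,\lambda,1})$ has dimension $1+d$, and fix the canonical basis $\basisD{v}{v}=\{\id\}\sqcup\mathcal R$, where $\id=\id_{B_{v,\lambda,1}}$ is the \emph{pathological} graph map of Example~\ref{ex:shift} that travels once around the band and $\mathcal R$ is the set of the remaining $d$ basic maps (singleton single and double maps, non-identity graph maps, and representatives of quasi-graph maps $B_{v,\lambda,1}\to\Sigma^{-1}B_{v,\lambda,1}$ other than the pathological one), all of which lie in the radical. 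The standing hypothesis $B_{v,\lambda,1}\ncong\Sigma B_{v,\lambda,1}$ is exactly what ensures, as in Example~\ref{ex:shift}, that the wrap-around homotopy class attached to the pathological quasi-graph map lives in $\Hom_{\sD}(B_{v,\lambda,1},\Sigma B_{v,\lambda,1})$ rather than contributing a further class to $\End_{\sD}(B_{v,\lambda,1})$. The goal is to show that the lifts of $\id$ span a $\min\{r,s\}$-dimensional subspace, that each $g\in\mathcal R$ lifts to exactly $rs$ linearly independent maps, and that together these form a basis of $\Hom_{\sD}(B_{v,\lambda,r},B_{v,\lambda,s})$.

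For the radical contribution one argues exactly as for Lemma~\ref{lemma: non-shifted, non-iso bands}. If $g\in\mathcal R$ lies in a singleton homotopy class --- automatic for graph maps and for singleton single and double maps --- then Lemma~\ref{lem:singleton-lift} lifts it to all $rs$ pairs of layers; otherwise $g$ arises from a non-pathological quasi-graph map $B_{v,\lambda,1}\to\Sigma^{-1}B_{v,\lambda,1}$, which has genuine endpoints, so the proofs of Lemmas~\ref{lem:lift-lin-comb} and \ref{lemma: non-shifted, non-iso bands} apply verbatim: they use only the endpoint structure of the quasi-graph map together with the non-shift hypothesis, never the inequality $v\neq w$ from their statements. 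Either way $g$ contributes exactly $rs$ pairwise non-homotopic maps, and lifts of distinct elements of $\mathcal R$ stay linearly independent because, by Lemma~\ref{lemma: basis maps determined} and the argument of Lemma~\ref{lemma: non-shifted, non-iso bands}, each such lift is detected by its unique non-isomorphism component. This produces $rs\cdot d$ linearly independent maps, all lying in $\Rad_{\sD}(B_{v,\lambda,r},B_{v,\lambda,s})$.

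The heart of the argument is the identity, where the governing phenomenon is the one in Examples~\ref{ex:bands-example1} and \ref{ex: bands-example3}: placing a copy of $\id$ between a pair of layers $(m,n)$ forces a cascade of isomorphism components through the links, and on following this cascade once around the band the square at the link fails to commute for all but a restricted set of pairs $(m,n)$. A direct computation with the differentials $D^i_{v,\lambda,r}$ and $D^i_{v,\lambda,s}$ --- whose link blocks $A^i$ are literal copies of the components $\lambda v_1$ of $D^i$ --- identifies the admissible pairs and shows that the corresponding lifts span a subspace of dimension exactly $\min\{r,s\}$: the lower bound is witnessed by that many visibly independent maps (when $r=s$, by $\id_{B_{v,\lambda,r}}$ together with its nilpotent powers, which shift each layer down by a fixed amount and so are supported on disjoint parts of the layer grid; in general by the compositions of the irreducible maps generating the homogeneous tube, in accordance with the tube structure recalled at the start of this section), while the upper bound is precisely the output of the link-square analysis. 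Every component of such a lift is an isomorphism (the forced link components are scalar multiples of identities), and none of them is null-homotopic: $\id$ is a graph map, so by Lemma~\ref{lem:homotopy-graph} --- whose proof carries over to the higher-dimensional setting because differential and link components are never zero --- each of its lifts lies in a singleton homotopy class.

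It remains to assemble the count. By Lemma~\ref{lem:differentialhigher} (whose argument applies equally at the level of $\sC$, where maps are determined by their components via Lemma~\ref{lemma: basis maps determined}), every map $B_{v,\lambda,r}\to B_{v,\lambda,s}$ is a lift of some element of $\Hom_{\sD}(B_{v,\lambda,1},B_{v,\lambda,1})$; decomposing that element along $\kk\id\oplus\Rad_{\sD}$ and reassembling lifts as in Lemma~\ref{lem:lift-lin-comb} shows that the lifts of $\id$ together with the lifts of the $g\in\mathcal R$ span $\Hom_{\sD}(B_{v,\lambda,r},B_{v,\lambda,s})$. These two families span complementary subspaces --- a nonzero combination of lifts of $\id$ has only isomorphism components, while a nonzero combination involving a lift of some $g\in\mathcal R$ retains a non-isomorphism component that, by Lemma~\ref{lemma: basis maps determined}, cannot be cancelled --- so together they form a basis, giving $\hom_{\sD}(B_{v,\lambda,r},B_{v,\lambda,s})=\min\{r,s\}+rs\cdot d$, which is the asserted formula. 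The step I expect to be the main obstacle is the analysis of the lifts of $\id$: determining exactly which pairs $(m,n)$ admit a lift, and proving that these lifts span precisely a $\min\{r,s\}$-dimensional space --- no larger, by the link-square obstruction of Example~\ref{ex: bands-example3}, and no smaller, by the explicit family above. The radical part and the final bookkeeping are routine adaptations of the earlier lemmas.
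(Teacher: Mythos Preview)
Your proposal is correct and follows essentially the same approach as the paper: both dispatch the radical part by invoking the arguments of Lemma~\ref{lemma: non-shifted, non-iso bands}, and both count lifts of the identity via the link obstruction of Example~\ref{ex: bands-example3}. The paper's version is terser on the identity count, its key concrete observation being that a lift of $\id$ to layers $(m,n)$ is literally the \emph{same} map as the lift to $(m\pm1,n\pm1)$ --- so the lifts are constant along diagonals --- which, combined with the failure to lift at $(m,s)$ for $m\neq r$ and at $(1,n)$ for $n\neq1$, yields exactly $\min\{r,s\}$ distinct lifts; your cascade description and tube-structure witnesses amount to the same thing.
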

  
\begin{proof} 
  By the same arguments as in the proof of Lemma \ref{lemma:
    non-shifted, non-iso bands}, any non-isomorphism from
  $B_{v,\lambda,1}$ to $B_{v,\lambda,1}$ can be lifted to any pair of
  layers $(m,n)$.  So we must have that
  $\hom_{\sD}(B_{v,\lambda,r},B_{v,\lambda,s})$ is at least $rs \cdot
  \rad(B_{v,\lambda,1},B_{v,\lambda,1})$.
  
  It remains to consider how many maps can be lifted from the identity
  map.  Clearly the identity will pass through the link but, in
  contrast to non-isomorphism case, the non-zero components acquired
  in the layers above and below $n$ and $m$ will occur on both sides
  of the link (as in Example \ref{ex: bands-example3}). Thus, if $f$
  is lifted from the identity to $(m,n)$ for $1 < m < r$ and $1 <n
  <s$, then $f$ is also the identity lifted to $(m+1, n+1)$ and to
  $(m-1, n-1)$.
  
  The identity map does not lift to $(m,s)$ for $m \neq r$ nor to $(1,
  n)$ for $n \neq 1$ because it is not possible to satisfy the
  required commutativity relations, cf. Example \ref{ex:
    bands-example3}.  It follows from this that there are
  $\min\{r,s\}$ maps lifted from the identity, each projecting from
  the top-most layers of $B_{v,\lambda,r}$ onto the bottom-most layers
  of $B_{v,\lambda,s}$.
\end{proof}

\subsection{Self-extensions of bands}

As in Lemma~\ref{lem:iso-bands}, we must be careful when considering
the homotopy class in $\Hom_{\sC}(B_{v,\lambda,1}, \Sigma
B_{v,\lambda,1})$ corresponding to the identity in
$\Hom_{\sC}(B_{v,\lambda,1},B_{v,\lambda,1})$. Here, the example to
keep in mind is Example~\ref{ex:shift}.

\begin{lemma} \label{lem:non-trivial-homotopy} Let $1 \leq i \leq r$
  and $1 \leq j \leq s$. Denote the lift of $(v_1)$ to layers $(i,j)$
  by $(v_1)^{(i,j)}$. Then, $ (v_1)^{(i,1)} \simeq 0 \text{ for } i >
  1 \text{ and } (v_1)^{(i,j-1)} \simeq -(v_1)^{(i+1,j)} \text{
    whenever } j > 1 $, where we take $(v_1)^{(r+1,j)}=0$.
\end{lemma}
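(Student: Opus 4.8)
The plan is to make direct homotopy computations on the unfolded diagram of $B_{v,\lambda,r}$, reading off the forced components exactly as in Example~\ref{ex:shift} and Example~\ref{ex: bands-example3}. Recall that the link in $B_{v,\lambda,r}$ is the homotopy letter $v_1$, appearing with scalar $\lambda$ on layer-differentials $D^i$ and with scalar $1$ on the off-diagonal blocks $A^i$. The map $(v_1)^{(i,j)}$ is the single map $(v_1)$ inserted from layer $i$ of the source to layer $j$ of the target, together with whatever components are forced by composing through links. I would first treat the statement $(v_1)^{(i,1)} \simeq 0$ for $i>1$. Here $n=1$, so by Lemma~\ref{lem:nolink}\eqref{item:n=1} the lifted map has no extra components in the target; the component $(v_1)$ sits in layer $i$ of $B_{v,\lambda,r}$ with $i>1$. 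One then writes down the single-component homotopy: the path $v_1$ is a used differential ($d^i_{jk} = \lambda w_1$ on layer $i$, but also $a^i_{jk} = w_1$ on the block $A^i$ linking layer $i$ to layer $i-1$, which is available since $i>1$), and one checks that the endpoint conditions \None--\Nthree\ of Proposition~\ref{prop:homotopy-classes} are satisfied for this single map because we can complete $v_1$ using the link differential coming from layer $i-1$ above. Concretely, the homotopy map $h$ with the single component equal to an identity from layer $i$ (source) to layer $i-1$ (target, but since $n=1$ this means we use the source-side link) kills $(v_1)^{(i,1)}$; the verification is exactly the computation in the first case of Example~\ref{ex:shift}, except that here the ``wrap-around'' does not re-contribute because $n=1$ forbids a target-side link. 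This gives $(v_1)^{(i,1)} \simeq 0$.

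For the second statement, $(v_1)^{(i,j-1)} \simeq -(v_1)^{(i+1,j)}$ for $j>1$, I would exhibit an explicit homotopy. The idea is identical to the computation $(\hat b) \simeq (-\tfrac{1}{\mu}\hat a) - (\tfrac{1}{\mu}\tilde b)$ in Example~\ref{ex:bands-example2}, but now for the identity-type map rather than for a genuine basic map, so the single maps $(v_1)$ on either side of the link interact. Insert a copy of the single map $(v_1)$ from layer $i$ to layer $j-1$. Because $j-1 \geq 1$ but $j-1 < j \leq s$, there is a target-side link from layer $j-1$ to layer $j$; pushing the homotopy through this link forces a component $(v_1)$ from layer $i$ of the source to layer $j$ of the target. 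Simultaneously, because $i \leq r$, there may be a source-side link from layer $i+1$ down to layer $i$, and the homotopy completing $v_1$ on the source side contributes a component $(v_1)$ from layer $i+1$ to layer $j$. Writing out the identity $f^\bullet - g^\bullet = d^\bullet h^\bullet + h^\bullet \dd^\bullet$ on the relevant projective summands, with $h$ supported on the single identity component dictated by the link, the block-matrix bookkeeping of $D^i_{v,\lambda,r}$ (diagonal blocks $D^i$ carrying $\lambda v_1$, superdiagonal blocks $A^i$ carrying $v_1$) gives precisely $(v_1)^{(i,j-1)} + (v_1)^{(i+1,j)} \simeq 0$, i.e.\ the claimed relation, with the convention $(v_1)^{(r+1,j)}=0$ when $i=r$ (there is no source link above the top layer, so no component is forced there).

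The main obstacle, and the step needing the most care, is the sign and scalar bookkeeping in the block differential: one has to be scrupulous about which occurrences of $v_1$ carry the scalar $\lambda$ (the ones inside $D^i$) versus scalar $1$ (the ones inside $A^i$), and about the direction of the link arrows (always downwards, from layer $i+1$ to layer $i$ in the unfolded diagram), so that the homotopy formula produces the coefficient $-1$ rather than some other scalar. It is exactly this interplay — a factor of $\lambda$ from a diagonal block cancelling against a factor from completing through an $A^i$-block — that was illustrated in Examples~\ref{ex: bands-example3} and~\ref{ex:shift}, and the general proof is just the systematic version of those computations. A secondary point to handle cleanly is the boundary conventions: when $i=r$ there is no layer $r+1$, so the source-side link is absent and one genuinely gets $(v_1)^{(r,j-1)} \simeq 0$; this is consistent with setting $(v_1)^{(r+1,j)}=0$, and it is worth remarking that it does not clash with the first statement since there $j-1 = 1$ forces the even stronger vanishing. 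I would close by noting that both homotopies are supported on a single identity component and hence are legitimate homotopy maps $B_{v,\lambda,r}^\bullet \to B_{v,\lambda,s}^{\bullet-1}$, so no convergence or finiteness issue arises even for infinite bands.
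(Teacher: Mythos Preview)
Your proposal has a genuine gap: the homotopies you describe as ``supported on a single identity component'' will not do the job. If $h$ has only one identity component, then $dh + h\partial$ picks up a contribution from \emph{every} differential component adjacent to the support of $h$ --- the within-layer letters $\lambda v_1, v_2, \ldots, v_n$ as well as the link $v_1$. So a one-component $h$ does not relate $(v_1)^{(i,1)}$ to zero; it relates it to a linear combination of several other single maps (for instance $\lambda (v_1)^{(i-1,1)}$ together with whatever adjacent letters contribute). Your parenthetical ``since $n=1$ this means we use the source-side link'' is where the argument slips: there is no layer ``$i-1$ of the target'' to aim at, and redirecting to a source-side link does not make the unwanted within-layer terms disappear.

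What actually works, and what the paper does, is to lift the \emph{full} wrap-around homotopy from Example~\ref{ex:shift} --- the one witnessing the tautology $(v_1) \simeq (v_1)$ on $B_{v,\lambda,1}$. That homotopy is the identity map $B_{v,\lambda,1} \to B_{v,\lambda,1}$, viewed as a degree-$(-1)$ map into $\Sigma B_{v,\lambda,1}$; it has a component at \emph{every} vertex of the band, not just one. When this is lifted to the pair of layers $(i,j)$, the within-layer contributions cancel exactly as in the one-dimensional case (they produce $(v_1)^{(i,j)} - (v_1)^{(i,j)}$), and the only surviving terms are the two link contributions: one from the source link (layer $i+1 \to i$) and one from the target link (layer $j \to j-1$). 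This yields the single relation
\[
(v_1)^{(i,j)} \ \simeq \ (v_1)^{(i,j)} - \lambda^{-1}(v_1)^{(i,j-1)} - \lambda^{-1}(v_1)^{(i+1,j)},
\]
from which both assertions of the lemma follow at once (take $j=1$ for the first, using $(v_1)^{(i,0)} = 0$). So rather than two separate short homotopies, one lift of the wrap-around homotopy handles everything; the mechanism is precisely that the wrap-around cancels itself and only the link terms survive. You cite Example~\ref{ex:shift} but then do not use its content: that example is exactly exhibiting this long homotopy, not a single-component one.
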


\begin{proof}
  The single map $(v_1) \colon B_{v,\lambda,1} \to \Sigma
  B_{v,\lambda,1}$ is a representative of the quasi-graph map
  corresponding to the identity $B_{v,\lambda,1} \to
  B_{v,\lambda,1}$. Moreover, this quasi-graph map determines a
  non-trivial homotopy from $(v_1)$ to $(v_1)$. Lifting this
  non-trivial homotopy to the pair of layers $(i,j)$ gives $
  (v_1)^{(i,j)} \simeq (v_1)^{(i,j)} - \lambda^{-1}(v_1)^{(i,j-1)} -
  \lambda^{-1}(v_1)^{(i+1,j)} $, which outputs the homotopy
  equivalences as claimed.
\end{proof}

\begin{proposition} \label{prop:selfext} Let $r,s \in \bN$, $v \in
  \bands$ and $\lambda \in \kk^*$. Then
\[ 
\hom_{\sD}(B_{v,\lambda,r}, \Sigma B_{v,\lambda,s}) = \min\{r,s\} + rs
\cdot (\hom_{\sD}(B_{v,\lambda,1}, \Sigma B_{v,\lambda,1})-1)
\]
\end{proposition}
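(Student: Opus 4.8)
The plan is to follow the pattern of Theorem~\ref{thm:homotopy-bands} and Lemma~\ref{lem:iso-bands}: reduce to one-dimensional bands and count how the canonical basis of $\Hom_{\sD}(B_{v,\lambda,1},\Sigma B_{v,\lambda,1})$ lifts. Since $\Sigma B_{v,\lambda,s}$ is again a higher-dimensional band complex, Lemma~\ref{lem:differentialhigher} tells us every map $B_{v,\lambda,r}\to\Sigma B_{v,\lambda,s}$ is lifted from some $f'\in\Hom_{\sD}(B_{v,\lambda,1},\Sigma B_{v,\lambda,1})$ to a pair of layers. Among the basis elements of $\Hom_{\sD}(B_{v,\lambda,1},\Sigma B_{v,\lambda,1})$ exactly one is distinguished: the single map $(v_1)$ representing the pathological quasi-graph map $B_{v,\lambda,1}\to\Sigma^{-1}\Sigma B_{v,\lambda,1}=B_{v,\lambda,1}$ attached to the identity of $B_{v,\lambda,1}$ (Remark~\ref{item:pathology}); every other basis element is a single, double, graph or quasi-graph-representative map whose underlying (quasi-)graph map is honest, with genuine endpoint conditions. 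Since a non-zero bounded complex is never isomorphic to a non-trivial shift of itself, $B_{v,\lambda,1}\ncong\Sigma B_{v,\lambda,1}$ and $B_{v,\lambda,1}\ncong\Sigma^{2}B_{v,\lambda,1}$, so no isomorphisms intervene. Put $N\coloneqq\hom_{\sD}(B_{v,\lambda,1},\Sigma B_{v,\lambda,1})-1$, the number of non-distinguished basis maps.

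For the non-distinguished basis maps I would simply re-run the arguments of Lemmas~\ref{lem:singleton-lift}, \ref{lem:lift-lin-comb} and \ref{lemma: non-shifted, non-iso bands}. A non-distinguished map lying in a singleton homotopy class (graph maps, by Lemma~\ref{lem:homotopy-graph}, and singleton single or double maps) lifts to exactly $rs$ linearly independent maps by Lemma~\ref{lem:singleton-lift}; a non-distinguished map with a larger homotopy set is controlled by an honest quasi-graph map, so the proof of Lemma~\ref{lemma: non-shifted, non-iso bands} applies verbatim and again yields exactly $rs$ homotopy classes. None of those arguments used anything beyond the honesty of the (quasi-)graph maps, i.e. that they are not the pathological one. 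Hence the non-distinguished basis maps contribute $rsN$ linearly independent homotopy classes to $\Hom_{\sD}(B_{v,\lambda,r},\Sigma B_{v,\lambda,s})$.

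Next I would count the homotopy classes lifted from the distinguished map $(v_1)$; this is the analogue of the identity count in Lemma~\ref{lem:iso-bands}, now using the non-trivial tautologous homotopy $v_1-v_1\simeq 0$ of Example~\ref{ex:shift}. Because $(v_1)$ is a single map, Lemma~\ref{lem:nolink}\eqref{item:single} shows its lift $(v_1)^{(i,j)}$ to layers $(i,j)$ is well-defined with no extra components for every $1\le i\le r$, $1\le j\le s$, giving $rs$ maps in $\Hom_{\sC}$; by Lemma~\ref{lem:non-trivial-homotopy} they satisfy, modulo homotopy, $(v_1)^{(i,1)}\simeq 0$ for $i>1$ and $(v_1)^{(i,j-1)}\simeq-(v_1)^{(i+1,j)}$ for $j>1$, with $(v_1)^{(r+1,j)}\coloneqq 0$. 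Grouping the lifts by diagonals $c=j-i$, the second relation identifies all lifts on a fixed diagonal up to sign, and the two boundary specialisations — $j=1$ in the first relation and $i=r$ in the second — force precisely the diagonals with $c<\max\{0,\,s-r\}$ to vanish, leaving $\min\{r,s\}$ diagonals, each a one-dimensional homotopy class. That these surviving classes are genuinely non-zero and mutually independent is checked by running the algorithm of Proposition~\ref{prop:homotopy-classes}, exactly as in Lemmas~\ref{lemma: non-shifted, non-iso bands} and \ref{lem:iso-bands}: the pertinent quasi-graph configurations fail the null-homotopic (= graph-map) endpoint conditions, and the one-dimensional class is already non-zero by Example~\ref{ex:shift}. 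Finally, distinct basis maps in play have disjoint sets of non-zero components, so the lifts of $(v_1)$ together with the lifts of the non-distinguished maps form a linearly independent family (Lemma~\ref{lemma: basis maps determined} and its higher-dimensional analogue); summing, $\hom_{\sD}(B_{v,\lambda,r},\Sigma B_{v,\lambda,s})=rsN+\min\{r,s\}=\min\{r,s\}+rs\bigl(\hom_{\sD}(B_{v,\lambda,1},\Sigma B_{v,\lambda,1})-1\bigr)$.

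The main obstacle is the diagonal bookkeeping for $(v_1)$: unlike the generic quasi-graph maps of Lemma~\ref{lemma: non-shifted, non-iso bands}, the pathological quasi-graph map carries no genuine endpoint conditions, so one cannot read off vanishing ``for free'' and must instead combine the explicit relations of Lemma~\ref{lem:non-trivial-homotopy} with the null-homotopy algorithm of Proposition~\ref{prop:homotopy-classes}. Concretely, one must verify that the two boundary effects — the forced vanishing of $(v_1)^{(i,1)}$ for $i>1$ at the bottom layer of the target, and the vanishing that the convention $(v_1)^{(r+1,j)}=0$ propagates up the diagonals from the top layer of the source — interact so as to leave exactly $\min\{r,s\}$ surviving diagonals (the upper bound) and that no further homotopy collapses them (the matching lower bound).
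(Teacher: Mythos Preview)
Your proposal is correct and follows essentially the same approach as the paper: separate the distinguished quasi-graph map representative $(v_1)$ from the other basis elements, lift the latter to $rs$ classes each via the arguments of Lemmas~\ref{lem:singleton-lift}--\ref{lemma: non-shifted, non-iso bands}, and handle $(v_1)$ via the relations of Lemma~\ref{lem:non-trivial-homotopy}. Your diagonal bookkeeping (indexing by $c=j-i$) is exactly a reformulation of the paper's explicit chains $(v_1)^{(1,j)}\simeq -(v_1)^{(2,j+1)}\simeq\cdots$, and your vanishing criterion $c<\max\{0,s-r\}$ agrees with the paper's count; if anything, you are slightly more explicit than the paper about verifying non-vanishing and linear independence of the surviving $\min\{r,s\}$ classes.
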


\begin{proof}
  Observe that every basis element of $\Hom_{\sD}(B_{v,\lambda,1},
  \Sigma B_{v,\lambda,1})$ passes through the link at most once except
  for the quasi-graph map corresponding to the shifted identity.  As
  before, each of these basis elements can be lifted to $(m,n)$ for
  any $m,n$ and so $\hom_{\sD}(B_{v,\lambda,r}, \Sigma
  B_{v,\lambda,s})$ is at least $rs \cdot (\hom_{\sD}(B_{v,\lambda,1},
  \Sigma B_{v,\lambda,1})-1)$. It remains to determine how many
  linearly independent homotopy classes are lifted from this remaining
  homotopy class.

  Repeated application of Lemma~\ref{lem:non-trivial-homotopy} yields
  the following homotopy equivalences:
  \begin{eqnarray*}
    (v_1)^{(1,1)} & \simeq & -(v_1)^{(2,2)} \simeq (v_1)^{(3,3)} \simeq \cdots \simeq \pm (v_1)^{(s,s)}; \\
    (v_1)^{(1,2)} & \simeq & -(v_1)^{(2,3)} \simeq (v_1)^{(3,4)} \simeq \cdots \simeq \pm (v_1)^{(s-1,s)}; \\
    \vdots \\
    (v_1)^{(1,s)}.
  \end{eqnarray*}
  Therefore, if $r \geq s$, then none of these homotopy equivalences
  includes a zero map, and therefore none of the homotopy classes
  above are null. In particular, there are $s$ homotopy classes. If
  $r<s$, the the first $s-r$ homotopy classes are actually
  null-homotopic, which leaves $s - (s-r) = r$ non-null homotopy
  classes.

  To see that all other lifts of $(v_1)$ are null-homotopic, we again
  apply Lemma~\ref{lem:non-trivial-homotopy}. In this case, it yields
  that $0 \simeq (v_1)^{(i,1)} \simeq -(v_1)^{(i+1,2)} \simeq
  (v_1)^{(i+2,3)} \simeq \cdots \simeq \pm (v_1)^{(r,s+2-i)}$ for $2
  \leq i \leq r$ when $r \geq s$, and $0 \simeq (v_1)^{(i,1)} \simeq
  -(v_1)^{(i+1,2)} \simeq (v_1)^{(i+2,3)} \simeq \cdots \simeq \pm
  (v_1)^{(r,r+1-i)}$ for $2 \leq i \leq r$ when $r < s$.

  All that remains to show is that lifts of all other single maps
  occurring as representatives of the quasi-graph map corresponding to
  $\id_{B_{v,\lambda,1}}$ are linear combinations of representatives
  of the homotopy classes described above. This follows directly from
  Lemma~\ref{lem:lift-lin-comb}.
\end{proof}

Many tubes occurring in representation theory are hereditary. It is
natural to ask whether the same is true for the homogeneous tubes that
arise from homotopy bands. This can be seen not to be the case using
the following straightforward example.

\begin{example}
  Consider the algebra and homotopy band from the Running Example:
\[
z = (d,3,2)(e,2,1)(f,1,0)(c,0,1)(b,1,2)(a,2,3). 
\]
The unfolded diagram below shows a graph map of degree 3, thus
$\Ext^{3}(B_{z,\lambda,1},B_{z,\lambda,1}) \neq 0$.
\[
 \xymatrix@R=1pc@C=1.4pc{
\text{degrees:} & 2                          & 3                              & 2                  & 1                          & 0                                        & 1                  & 2                        & 3                              & 2 \\
B_{z,\lambda,1} \colon \ar[d] & \xydot \ar[r]^-{\lambda a} & \xydot                         & \xydot \ar[l]_-{d} & \xydot \ar[l]_-{e}         & \xydot \ar[l]_-{f} \ar[r]^-{c} \ar@{=}[d] & \xydot \ar[r]^-{b} & \xydot \ar[r]^{\lambda a} & \xydot                         & \xydot \ar[l]_-{d}   \\
\Sigma^3 B_{z,\lambda,1} \colon  & \xydot                     & \xydot \ar[l]^-{f} \ar[r]_-{c} & \xydot \ar[r]_-{b} & \xydot \ar[r]_-{\lambda a} & \xydot                                    & \xydot \ar[l]^-{d} & \xydot \ar[l]^-{e}       & \xydot \ar[l]^-{f} \ar[r]_-{c} & \xydot \\
\text{degrees:} & -2                          & -3                              & -2                  & -1                          & 0                                        & -1                  & -2                        & -3                              & -2  
}
\]
\end{example}

%============================================================================
% Irreducible morphisms
\section{Application: Irreducible morphisms between string
  complexes} \label{sec:irreducible}
%============================================================================

In this section, we recover Bobi\'nski's description \cite{Bo} of the
irreducible maps in $\sK$ and extend it to $\sD$. In \cite{Bo},
Bobi\'nski employed the Happel functor $F\colon \sD \into
\stmod{\hat{\Lambda}}$ to determine the AR structure of $\sK \into
\sD$. We present an algorithm intrinsic to $\KpL$, which allows one to
compute all irreducible maps in $\sD$. (Note that only the full
subcategory of perfect complexes $\sK \into \sD$ admits AR triangles.)
By \cite{BR}, it is known that for any $P \in \ind{\sK}$, there are at
most two irreducible maps starting and ending at $P$.

Recall from \cite{AG,Bo} that there exist \emph{string functions} $S,T
\colon \strings \to \{-1,1\}$ satisfying certain properties (see
\cite{AG,Bo} for precise details). We remark only on the consequences.
\begin{itemize}
\item We must consider formal inverses of trivial homotopy strings,
  say $(1_x,i,i)^{-1} = (\overline{1_x}, i,i)$, as distinct homotopy
  strings; of course they give rise to the same complex.
\item For each homotopy string $w$, there are unique
  trivial homotopy strings $1_x$ and $1_y$ such that the compositions $w
  1_x$ and $1_y w$ are defined.
\end{itemize}

We set up some notation and terminology.

\begin{notation}
Let $f = (f_r, \ldots, f_0) \in \Hom_{\sD}(P_w, P_v)$ be as illustrated below.
\[
\xymatrix@=1.4pc{
  \ar@{.}[r] & \xydot \ar[d]^{f_r}
  \ar@{-}[r]^-{w_n} & \cdots \ar@{-}[r]^-{w_l} & \xydot \ar[d]^{f_0} \ar@{.}[r] & \\
  \ar@{.}[r] & \xydot
  \ar@{-}[r]_-{v_{n'}} & \cdots \ar@{-}[r]_-{v_{l'}} & \xydot
  \ar@{.}[r] & \\
}
\]
The \emph{support of $f$} is the homotopy substring
$\support{f} \coloneqq \prod_{k=n}^l (w_k,i_k,j_k)$ of $w$.  For $l \leq i \leq n$ we will
also say that $f$ is supported at $P(\phi_w(i))$.  If $f$ has only one
non-zero component, say $f_0 \colon P(\phi_w(k)) \rightarrow
P(\phi_v(l))$, we say that $f$ is supported at $P(\phi_w(k))$.
\end{notation}

Recall the definition of antipath from Definition~\ref{def:antipath}.
A direct (resp. inverse) antipath $\theta$ is called \emph{maximal} if
there is no $a \in \Gamma_1$ such that $\theta(a,i,i+1)$ is a direct
antipath (resp. $(a,i,i-1)\theta$ is an inverse antipath).

\begin{remark}
  (Maximal) antipaths do not have to be finite; however, we cannot
  compose a homotopy string with an infinite antipath if this takes us
  outside $\sD$.  In particular, if $\theta = \theta_1 \theta_2 \cdots $ is an
  infinite direct antipath and $w$ is a finite homotopy string such
  that $w \theta_1$ is defined, then the composition $w \theta$ is not
  in $\sD$.  Similarly, if $\theta = \cdots \theta_2 \theta_1$ is an infinite
  inverse antipath and $\theta_1 w$ is defined, then the composition
  $\theta w$ is not in $\sD$.  Note that infinite
  antipaths correspond to oriented cycles in the quiver with `full
  relations'.
\end{remark}

\subsection{Algorithm for determining irreducible maps}
The algorithm is stated here for $\sK$.  We extend it to infinite
strings in Section \ref{sec:infinitealg}.  Let $w = \prod_{k=n}^1
(w_k, i_k,j_k) \in \strings$.  The strategy of the algorithm is as
follows: consider the identity map on $P_w$:
\[
\xymatrix@=1.2pc{
  \xydot \ar@{-}[r]^-{w_n} \ar@{=}[d] & \xydot \ar@{-}[r] \ar@{=}[d]
  & \cdots \ar@{-}[r] & \xydot
  \ar@{-}[r]^-{w_1} \ar@{=}[d] &\xydot \ar@{=}[d] \\
  \xydot \ar@{-}[r]^-{w_n} &\xydot \ar@{-}[r] & \cdots \ar@{-}[r] &
  \xydot
  \ar@{-}[r]^-{w_1} &\xydot \\
}
\]
We alter the map `from the left' in a minimal way to get a new map.
This gives a new string $w^+$ and a new map $f^+ \colon P_w \to
P_{w^+}$, which may each be zero. However, when they are nonzero, the
resulting map will turn out to be irreducible. This deals with one of
the two possible irreducible maps. To get the other, we alter the map
in a minimal way `from the right'. This gives a new string $w_+$ and a
new map $f_+ \colon P_w \to P_{w_+}$, which again may each be zero. At
least one of $f^+$ or $f_+$ will be nonzero.  We explicitly give the
algorithm which alters the map in a minimal way `from the left'. The
algorithm doing this `from the right' is dual.

\begin{algorithm} \label{alg:leftirred} The algorithm proceeds as
  follows, where we carry out each step in sequence unless instructed
  otherwise. The map $f'$ is defined in each step in
  Figure~\ref{fig:irred}.

\Step{1} If there is a direct homotopy letter $u$ such that $uw$ is a
homotopy string and $u$ is a maximal path, we set $w' = uw$, and go to
Step 8. 

\Step{2} Remove the longest direct antipath which is a left substring
of $w$. Write $\psi_w \coloneqq w_n \cdots w_{r+1}$ for this antipath.

\Step{3} If $(w_r,i_r,j_r)$ is inverse and there exists $a \in
\Gamma_1$ such that $w_r a \neq 0$, then set $w' = (w_r a,i_r,j_r)
\prod_{k=r-1}^1 (w_k,i_k,j_k)$, and go to Step 8.

\Step{4} If $(w_r,i_r,j_r)$ is inverse but there is no $a \in
\Gamma_1$ such that $w_r a \neq 0$, then set $w'
=\prod_{k=r-1}^{1}(w_k,i_k,j_k)$, and go to Step 9.

\Step{5} If $(w_r,i_r,j_r)$ is direct, then decompose $w_r = a a'$
with $a \in \Gamma_1$ and set $w' = (a',i_r,j_r) \prod_{k=r-1}^1
(w_k,i_k,j_k)$, and go to Step 8.

\Step{6} If $w$ is a direct antipath and there exists $a \in \Gamma_1$
with $t(a) = \phi_w(0)$ with $w_1 a =0$, then set $w'$ to be the
trivial homotopy string such that $ww'$ is defined, and go to Step 8.

\Step{7} Set $f'= f^+$ to be the zero map $P_w \to 0$ and terminate
the algorithm.

\Step{8} If there is a maximal inverse antipath $\theta =
\prod_{k=m}^1(\theta_k,i'_k,j'_k)$ such that $\theta w'$ is defined as
composition of homotopy strings, we set $w^+ = \theta w'$ and let
$f^+$ have the same components as $f'$ and terminate the algorithm.

\Step{9} Set $f^+ = f'$ and $w^+ = w'$.
\end{algorithm}

\begin{notation}
  We shall refer to the the dual algorithm that alters a homotopy
  string in a minimal way `from the right' as
  Algorithm~\ref{alg:leftirred}$'$; a prime will also be affixed to
  denote the dual of each of the corresponding steps, i.e. the dual of
  Step $i$ is Step $i'$.
\end{notation}

\begin{figure}\label{fig:irreducible}
\[
\begin{array}{ccc}
  \xymatrix@=1.4pc{
    & & &\xydot \ar@{-}[r]^-{w_n} \ar@{=}[d] & \xydot \ar@{-}[r] \ar@{=}[d] & \cdots
    \ar@{-}[r]^-{w_1} & \xydot \ar@{=}[d] \\
    \xydot & \ar[l]_{\theta_m} \cdots & \ar[l]_-{\theta_1} \xydot \ar[r]^{u} &\xydot
    \ar@{-}[r]^-{w_n} & \xydot \ar@{-}[r] & \cdots \ar@{-}[r]^-{w_1} &\xydot \\
  } & \quad &
  \xymatrix@=1.4pc{
    \xydot \ar[r]^-{w_n} &\cdots \ar[r] &\xydot \ar[d]^a & \xydot
    \ar[l]_-{w_r} \ar@{-}[r] \ar@{=}[d] & \cdots
    \ar@{-}[r]^-{w_1} & \xydot \ar@{=}[d] \\
    \xydot & \ar[l]_{\theta_m} \cdots & \ar[l]_-{\theta_1} \xydot & \xydot
    \ar[l]_-{w_r a} \ar@{-}[r] & \cdots
    \ar@{-}[r]^-{w_1} & \xydot \\
  }
  \\
  \text{Step 1} & \quad &
  \text{Step 3} \vspace{12pt}\\
  \xymatrix@=1.4pc{
    \xydot \ar[r]^-{w_n} &\cdots \ar[r] &\xydot & \xydot
    \ar[l]_-{w_r} \ar@{=}[d] & \xydot \ar@{=}[d] \ar@{-}[l]_-{w_{r-1}}
    \ar@{-}[r] & \cdots \ar@{-}[r]^-{w_1} &\xydot \ar@{=}[d] \\
    & & & \xydot
    & \xydot \ar@{-}[l]_-{w_{r-1}}
    \ar@{-}[r] & \cdots \ar@{-}[r]^-{w_1} &\xydot \\
  }
  & \quad &
  \xymatrix@=1.4pc{
    \xydot \ar[r]^-{w_n} &\cdots \ar[r] &\xydot \ar[d]^a \ar[r]^-{w_r} & \xydot
    \ar@{-}[r] \ar@{=}[d] & \cdots
    \ar@{-}[r]^-{w_1} & \xydot \ar@{=}[d] \\
    \xydot & \ar[l]_{\theta_m} \cdots & \ar[l]_-{\theta_1} \xydot  \ar[r]^-{a'} & \xydot 
    \ar@{-}[r] & \cdots
    \ar@{-}[r]^-{w_1} & \xydot \\
  }\\
  \text{Step 4}& \quad & \text{Step 5} \vspace{12pt}\\
  \xymatrix@=1.4pc{
    \xydot \ar[r]^-{w_n} &\cdots \ar[r]^-{w_1} &\xydot \ar[d]^a \\
    \xydot & \ar[l]_{\theta_m} \cdots & \ar[l]_-{\theta_1} \xydot \\
  } & \\
  \text{Step 6}& &\\
\end{array}
\]
\caption{Diagrams defining the homotopy strings $w'$ and maps $f'$
  produced in each step of Algorithm \ref{alg:leftirred}, with the
  maximal inverse antipath $\theta$ of Step 8 also added where
  appropriate.\label{fig:irred}}
\end{figure}
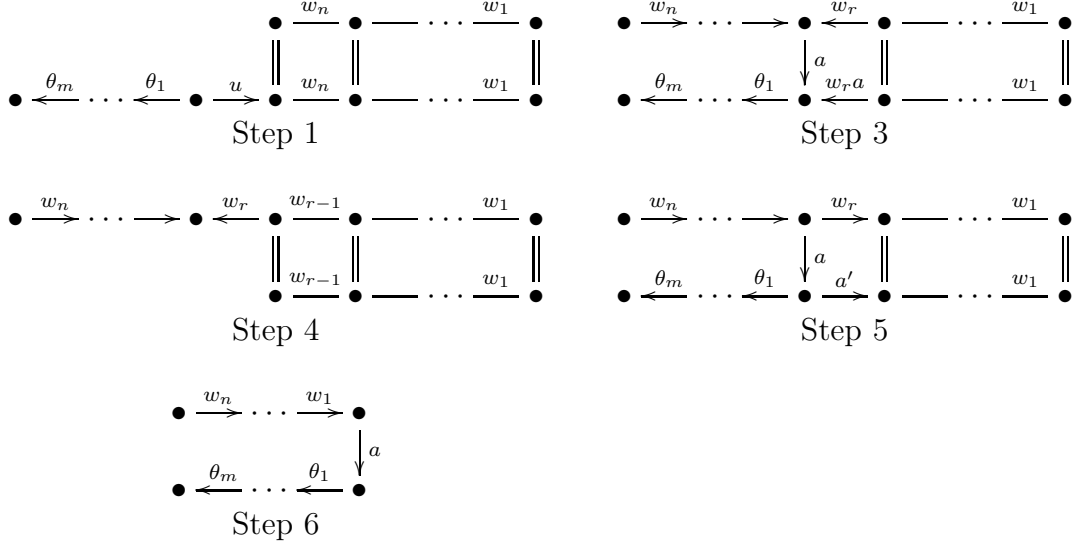

Note that Step 7 only occurs for homotopy strings that are either
trivial or antipaths.

\begin{remark}\label{rem:differentmaps} We highlight the following:
\begin{enumerate}[label=(\arabic*)]
\item The maps $f^+$ and $f_+$, when non-zero, are either graph maps
  or single maps.
\item If both $f^+$ and $f_+$ are non-zero, then the two maps are
  different at the level of chain maps.  If $w$ or $w'$ are trivial,
  then this is taken care of by the functions $S$ and $T$.
\item Whenever there is an $a \in \Gamma_1$ with $s(a) = x$, then Step 1
  or its `right' dual will occur when considering the trivial homotopy
  string $(1_x, i, i)$.  If there are two arrows $a$ and $b$ with
  $s(a) = x = s(b)$ then both Step 1 and its dual will occur.
\item A case analysis shows that if a maximal antipath $\theta$ is
  added in Step 8, then this antipath is finite.
\end{enumerate}
\end{remark}

\begin{theorem}\label{thm:irreducible}
  Let $w$ be a homotopy string, and let $f^+$ and $f_+$ be the outputs
  produced by Algorithm \ref{alg:leftirred} and its dual,
  respectively.  If $f^+$ (respectively $f_+$) is non-zero, then it is
  irreducible.
\end{theorem}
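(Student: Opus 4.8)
The plan is to prove irreducibility directly: assume $f^+ \colon P_w \to P_{w^+}$ factors as $P_w \xrightarrow{g} N \xrightarrow{h} P_{w^+}$ in $\sD$, and show that either $g$ is a split monomorphism or $h$ is a split epimorphism. Since $P_{w^+}$ (and $P_w$) are indecomposable, it suffices to show that if neither $g$ nor $h$ is an isomorphism on the relevant indecomposable summand of $N$ through which $f^+$ factors, we reach a contradiction. By decomposing $N$ into indecomposables and using that $\Hom_\sD$ is spanned by the canonical basis of Theorem~\ref{thm:main}, we may replace $N$ by a single indecomposable $Q_u$ (a string or band complex, or an infinite string complex), and $g \in \basisD{w}{u}$, $h \in \basisD{u}{w^+}$ may be taken to be basis maps. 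The key structural input is Remark~\ref{rem:differentmaps}(1): $f^+$ is either a graph map or a single map, and its shape is extremely rigid — it differs from the identity on $P_w$ by a single ``minimal alteration from the left'' as recorded in Figure~\ref{fig:irred}.

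First I would treat the case $f^+$ is a graph map. In this situation $w$ and $w^+$ agree on a large common substring (all but a bounded piece at the left end), and $f^+$ restricted to that common substring is a family of identity isomorphisms; the non-trivial data is confined to finitely many homotopy letters near the left endpoint, where one of Steps~1,3,5,6 together with Step~8 has been applied. The point is that a factorisation $f^+ = hg$ through $Q_u$ forces $u$ to ``contain'' enough of this common substring that either $g$ or $h$ is itself a graph map which is an isomorphism onto a substring that is cofinite in $w$ (resp. $w^+$); combined with the matching of the altered left-end pieces, this forces $g$ or $h$ to be invertible. Concretely, I would argue that the composite $hg$ has the same support as $f^+$ and the same non-zero components, and then use Lemma~\ref{lemma: basis maps determined} (each basis map is determined by any one non-zero component) to pin down $g$ and $h$ on overlapping regions; a counting/support argument on the lengths of the substrings involved then yields that one of them has full support and hence is an isomorphism. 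The role of the string functions $S,T$ and Remark~\ref{rem:differentmaps}(2) is to handle the boundary bookkeeping with trivial homotopy strings.

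Next the case $f^+$ is a single map: this arises precisely from Steps~1, 5 or 6 when no inverse antipath is prepended, or more generally when the alteration collapses to a single non-zero component given by a path $p$. Here irreducibility is more delicate because single maps can genuinely compose to single maps or to graph-map-plus-single-map combinations. I would use the explicit description of the component $p$ coming from Figure~\ref{fig:irred} — it is a path obtained by either prepending a maximal arrow/antipath (Step~1) or stripping one arrow off a direct homotopy letter (Step~5) — together with the singleton-map conditions \Lone--\Rtwo\ and the singleton-single-map classification in Definition~\ref{def:singleton}, to show that any non-trivial factorisation $p = p_2 p_1$ through an intermediate complex would violate maximality: the ``minimal alteration from the left'' would not have been minimal, contradicting the construction of the algorithm. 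This maximality/minimality argument is the heart of the matter.

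The main obstacle I anticipate is the bookkeeping at the interface between the left-end alteration and the cofinite common part, especially (a) when a maximal inverse antipath $\theta$ is prepended in Step~8 — one must check $\theta$ is finite (Remark~\ref{rem:differentmaps}(4)) and that it cannot be ``split off'' into a separate factor — and (b) ruling out factorisations through infinite string complexes in $\sD \setminus \sK$, which requires the observation (from the Remark after Definition~\ref{def:antipath}, and the fact that $w \in \strings$ is finite) that composing $P_w$ with an infinite antipath leaves $\sD$, so no such intermediate object can absorb a non-trivial piece of $f^+$. I expect the graph-map case to follow fairly mechanically from Lemmas~\ref{lemma: zero squares} and \ref{lemma: basis maps determined} once supports are tracked, while the single-map case will need the minimality of the algorithm invoked explicitly as the decisive step.
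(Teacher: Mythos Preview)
Your approach is genuinely different from the paper's, and while not wrong in principle, it misses the organizing idea that makes the argument tractable.

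The paper does \emph{not} analyse factorizations of $f^+$ itself. Instead it proves that the pair $(f^+,f_+)\colon P_w \to P_{w^+}\oplus P_{w_+}$ is a minimal left almost split morphism: for every non-isomorphism $g\colon P_w\to P_v$ (with $g$ a basis map), one shows that $g$ factors through $f^+$ or through $f_+$. The auxiliary results Proposition~\ref{prop:not-irred}, Proposition~\ref{prop:single-necessary}, Corollary~\ref{cor:single} first dispose of double maps, graph maps with two non-isomorphism endpoints, and maps whose extremal components are not arrows, by exhibiting explicit factorizations. Lemmas~\ref{lem:antipath}--\ref{lem:nooverlap} then show that any remaining $g$ whose support misses the crucial left-end vertex already factors through $f^+$ or $f_+$. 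The final case analysis, organised by which step of the algorithm produced $f^+$, handles the basis maps $g$ that \emph{are} supported on that vertex. Irreducibility of $f^+$ then follows from standard AR theory and the fact (from \cite{BR}) that at most two irreducibles start at $P_w$.

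Two concrete difficulties with your route. First, the reduction ``we may take $g\in\basisD{w}{u}$, $h\in\basisD{u}{w^+}$ to be basis maps'' is not innocent: you need that whenever $f^+=\sum_{j,k}\lambda_j\mu_k\, h_k g_j$ with all $g_j,h_k$ non-invertible basis maps, no product $h_k g_j$ has $f^+$ as a summand in $\basisC{w}{w^+}$ --- and this requires a description of how basis maps compose, which the paper never develops (and which is considerably harder than exhibiting a single factorization). Second, you treat $f^+$ in isolation, but the paper's case analysis uses $f_+$ essentially: many candidates $g$ factor only through $f_+$, not $f^+$. Your direct approach would have to rediscover, for each such $g$, why the corresponding composition $hg$ cannot equal $f^+$ --- effectively reproving the same case analysis from the opposite side, but without the clean lemma structure. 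The ``minimality of the algorithm'' heuristic you invoke for single maps is exactly where this work is hidden; as stated it is circular.

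What each approach buys: the paper's source-map strategy turns the problem into constructing factorizations (easy, diagrammatic) rather than excluding them, and packages the left/right symmetry by treating $f^+$ and $f_+$ together. Your approach would in principle give irreducibility of $f^+$ without appealing to the existence of AR triangles or the bound from \cite{BR}, but at the cost of a substantially longer and more delicate argument.
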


In the remainder of this section, we verify
Theorem~\ref{thm:irreducible} in the case that $f^+ \neq 0$. The case
$f_+ \neq 0$ is dual.

\subsection{Irreducible maps involving infinite homotopy
  strings} \label{sec:infinitealg} Let $w \in \stringsone \cup
\stringstwo$.  We extend the above algorithm by possibly extending
graph maps to infinite graph maps, and by adding the following step
between Step 8 and Step 9:

\Step{8.5} If there is a maximal, infinite inverse antipath $\theta =
\cdots (\theta_1,i'_1,j'_1)$ such that $(\theta_1,i'_1,j'_1) w'$ is defined as
composition of homotopy strings, we set $f^+$ to be the zero map $P_w
\rightarrow 0$ and terminate the algorithm.

As commented above, the condition of this step is impossible if $w$ is
a finite homotopy string.  Moreover, this step ensures that we never
go outside $\sD$.

\begin{lemma}
  If $w^+ \neq 0$ for some homotopy string $w$, then $w^+$ is in
  $\strings$ if and only if $w$ is in $\strings$.
\end{lemma}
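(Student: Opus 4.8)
The plan is to walk through Algorithm~\ref{alg:leftirred} together with its extension in Section~\ref{sec:infinitealg}, step by step, keeping track at each stage of whether the homotopy string produced is finite, and to use the hypothesis $w^+ \neq 0$ to discard the two steps that abort with a zero map: Step~7 (which fires only for trivial strings and antipaths) and Step~8.5 (which fires exactly when a maximal \emph{infinite} inverse antipath can be prepended on the left). So under the hypothesis the algorithm terminates at Step~8 or Step~9, after first passing through one of Steps~1,~3,~4,~5,~6 to produce an intermediate homotopy string $w'$ and a map $f'$ as in Figure~\ref{fig:irred}, and possibly prepending a maximal inverse antipath $\theta$ in Step~8.

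For the implication ``$w \in \strings \Rightarrow w^+ \in \strings$'', note that when $w$ is finite each of Steps~1,~3,~4,~5,~6 alters $w$ only in its finitely many leftmost homotopy letters, while Step~2 removes the longest direct antipath that is a left substring of $w$, which is again finite; hence $w'$ is finite. If the algorithm terminates at Step~9 then $w^+ = w'$ and we are done. If it terminates at Step~8 then $w^+ = \theta w'$, and here $\theta$ must be a \emph{finite} antipath: were the maximal inverse antipath prependable on the left of $w'$ infinite, Step~8.5 would have fired and we would have $w^+ = 0$, contradicting the hypothesis. (This is precisely the content of Remark~\ref{rem:differentmaps}(4).) Thus $w^+$ is a finite concatenation of finite pieces, so $w^+ \in \strings$.

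For the converse ``$w \notin \strings \Rightarrow w^+ \notin \strings$'', one uses that $w$ is one-sided or two-sided resolvable, so after replacing $w$ by an equivalent representative (using the convention of Section~\ref{sec:infstrings} to put the finite ``core'' of a one-sided resolvable string at the left end, relative to the direction in which Algorithm~\ref{alg:leftirred} operates) the unfolded diagram of $w$ has an infinite antipath occurring as a \emph{right} substring. Steps~1--6 modify only the left end, so $w'$ still contains this infinite right substring; and $w^+$ equals $w'$ or $\theta w'$, where prepending $\theta$ on the left cannot destroy a right substring. Hence $w^+$ contains an infinite antipath and $w^+ \notin \strings$.

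The hard part will be the infinite case: one has to be careful about which representative of an infinite homotopy string is actually fed to Algorithm~\ref{alg:leftirred}, and to check that the operations near the modified end --- in particular Step~2, which strips off a maximal direct left-substring antipath, and Step~8, which prepends a maximal inverse antipath --- never consume or manufacture an infinite antipath in a way that would change the finiteness type. For a two-sided resolvable string, where an infinite antipath sits at \emph{both} ends no matter which representative is chosen, one additionally has to confirm that whenever the left algorithm does output a non-zero $w^+$, the infinite antipath on the opposite (right) end genuinely survives the whole procedure; together with the observation above about Steps~7 and~8.5 this closes the argument.
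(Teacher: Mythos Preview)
Your ``if'' direction is essentially the paper's argument, and your treatment of a \emph{right} infinite $w$ in the converse direction is also correct: the infinite inverse antipath on the right is never touched by Steps~1--8.5, so $w^+$ remains infinite.

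The gap is in your handling of a one-sided \emph{left} infinite $w = {}^{\infty}v$. Your move ``replace $w$ by an equivalent representative'' with the infinite part on the right is not legitimate here: the algorithm takes a specific homotopy string as input, and $w^+$ is defined as the output for \emph{that} input. Replacing $w$ by $(v^{-1})^{\infty}$ computes $((v^{-1})^{\infty})^+$, which is the same as $w_+$ for the original $w$, not $w^+$. So your argument proves the wrong thing in this case.

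The paper resolves this case differently: it shows that for left infinite $w = {}^{\infty}v$ the algorithm \emph{always} returns $w^+ = 0$, so the hypothesis $w^+ \neq 0$ is never met and the case is vacuous. The mechanism is that Step~2 strips off the entire infinite direct antipath on the left (which corresponds to a cyclic path $\rho$ with full relations), leaving a finite remainder; then one of Steps~3,~5,~6 applies, adding an arrow $a$ from the cycle $\rho$; and then Step~8.5 fires, because that same cycle $\rho$ can now be re-prepended infinitely as an \emph{inverse} antipath. This is the argument you need to supply rather than the invalid change-of-representative.
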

\begin{proof}
  The `if' direction is already established.  Consider $v \in
  \stringsone$ such that $w \coloneqq{}^{\infty}v$ is left infinite.
  When we run the algorithm, we perform Step 2 and remove the left
  infinite part of $w$, which corresponds to a cyclic path $\rho$.  We
  will then be able to add an arrow $a$ in one of Step 3, Step 5 or
  Step 6 (take $a$ to be the arrow preceding $w_{r-1}$ in $\rho$).
  Next Step 8.5 will occur, since we can always add infinitely many
  copies of $\rho$, but this time as inverse homotopy letters.

  If $v \in \stringsone$ and $w \coloneqq v^{\infty}$, we will never
  remove the infinite part from $w$ as it consists of inverse homotopy
  letters.  Hence, if the output $w^+$ is non-zero, it is also in
  $\stringsone$.
\end{proof}

\begin{corollary}
  If $w$ is a two-sided infinite homotopy string, then there are no
  irreducible maps in $\sD$ with $P_w$ as source.
\end{corollary}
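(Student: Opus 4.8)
The plan is to deduce the corollary from Algorithm~\ref{alg:leftirred} (extended to infinite strings) together with Theorem~\ref{thm:irreducible}. The irreducible maps in $\sD$ with source $P_w$ are exactly the non-zero ones among $f^+$, produced by running Algorithm~\ref{alg:leftirred} on $w$, and $f_+$, produced by running its dual; this is the content of the section (the bound of \cite{BR} applies only to perfect complexes, but the extended algorithm is set up precisely so that it computes all irreducible maps in $\sD$). Hence it suffices to show that, when $w$ is two-sided infinite, both outputs $f^+$ and $f_+$ are the zero map.

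Write $w = {}^\infty u^\infty$ with $u \in \stringstwo$. By the construction recalled in Section~\ref{sec:infstrings}, the left-infinite part of $w$ is an infinite direct antipath obtained by repeating a repetition-free cyclic path $\rho$ all of whose consecutive arrows (including the wrap-around) lie in $I$; dually, the right-infinite part of $w$ is an infinite inverse antipath built from such a cyclic path $\rho'$. Running Algorithm~\ref{alg:leftirred} on $w$, Step~1 cannot apply: one cannot compose a finite homotopy string on the left of a left-infinite one. Step~2 therefore removes the left-infinite direct antipath (together with whatever direct left substring of $u$ prolongs it), leaving a non-trivial string $w'$ (non-trivial because $u$, being two-sided resolvable, is not a direct antipath). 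From here the run proceeds exactly as in the left-infinite case treated in the preceding lemma: one of Steps~3, 5 or 6 produces $w'$ with an arrow of $\rho$ attached on the left, and then Step~8.5 is triggered, because the inverse homotopy letters of $\rho$ may be concatenated indefinitely to yield a maximal \emph{infinite} inverse antipath $\theta$ with $\theta w'$ a well-defined composition of homotopy strings. Thus $f^+$ is the zero map. The essential observation is that the right-infinite part of $w$ (which persists in $w'$) never enters into these initial steps of the left algorithm, so the analysis is literally that of the one-sided case.

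By the mirror argument applied to the right-infinite inverse antipath of $w$, the dual algorithm reaches Step~8.5$'$, so $f_+$ is also the zero map. Since $f^+$ and $f_+$ exhaust the irreducible maps in $\sD$ with source $P_w$, there are none, which is the assertion of the corollary. (Note that this says nothing about irreducible maps \emph{into} $P_w$.)

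The step I expect to require the most care is verifying that the run of Algorithm~\ref{alg:leftirred} on a two-sided infinite string really does reach Step~8.5, i.e. that the case distinction following Step~2 (Steps~3 versus 4 versus 5 versus 6, and whether a \emph{finite} maximal inverse antipath might be prepended first and trigger Step~8 instead) behaves as in the one-sided situation. For this I would reuse, essentially verbatim, the case analysis in the proof of the preceding lemma, together with the observation that no maximal inverse antipath prependable to $w'$ can be finite — any partial concatenation of copies of $\rho$ extends further — so Step~8 is skipped and Step~8.5 fires.
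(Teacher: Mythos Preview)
Your proof is correct and follows essentially the same approach as the paper: the corollary is stated without proof because it follows immediately from the proof of the preceding lemma, applied once to Algorithm~\ref{alg:leftirred} (using the left-infinite direct antipath to reach Step~8.5 and force $f^+=0$) and once to its dual (using the right-infinite inverse antipath to reach Step~8.5$'$ and force $f_+=0$). One small inaccuracy: Step~6 cannot actually occur for a two-sided infinite $w$, since such a $w$ is never a direct antipath (its right-infinite tail consists of inverse letters), so only Steps~3 or~5 are relevant after Step~2---but this does not affect your argument.
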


In the remainder of the section, we consider only irreducible maps in
$\sK$, but it is clear that the irreducible maps involving one-sided
homotopy strings will behave in the same way.

\subsection{Non-irreducible maps}

Before verifying Theorem~\ref{thm:irreducible}, we eliminate some maps
which can be easily seen to be non-irreducible.

\begin{proposition} \label{prop:not-irred}
Let $v,w \in \strings$ and $g \in \Hom_{\sK}(P_v,P_w)$. Then $g$ is {\bf not} irreducible if:
\begin{enumerate}[label=(\arabic*)]
\item \label{not-irred:double} $g$ is a double map;
\item \label{not-irred:graph} $g \in \graph{v}{w}$ is such that both
  endpoints are non-isomorphisms;
\item \label{not-irred:path} $g=(g_s,\ldots,g_0)$ is such that
  $g_s=ab$ with $a,b$ non-stationary paths in $\Gamma$;
\item \label{not-irred:dual} $g=(g_s,\ldots,g_0)$ is such that
  $g_0=ab$ with $a,b$ non-stationary paths in $\Gamma$.
\end{enumerate}
\end{proposition}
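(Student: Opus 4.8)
The plan is to show each of the four classes of maps factors non-trivially through another complex, exhibiting an explicit intermediate object built from the string combinatorics. The key principle throughout is that irreducible maps in an additive category cannot be split (they are radical maps), and a radical map $g$ fails to be irreducible if it factors as $g = g_2 g_1$ with neither $g_1$ nor $g_2$ an isomorphism; so in each case I will produce such a factorisation.

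For \ref{not-irred:double}: given a double map $g = (f_L, f_R) \in \double{v}{w}$, recall from Definition~\ref{def: double} that its two non-zero components sit in consecutive degrees and are related by the commuting square $(*)$ of diagram~\eqref{eq:double}. I would introduce the homotopy string $w'$ obtained from $v$ by ``cutting'' along the square at the central vertex $u_v$ — more precisely, the string whose left part agrees with $v$ up to and including $u_v$ and whose right part is a suitably short piece so that $Q_{w'}$ receives a single map from $Q_v$ (the component $f_L$) and maps to $Q_w$ by the component $f_R$ together with $u_w$ as an isomorphism. The composite $Q_v \to Q_{w'} \to Q_w$ recovers $g$, and neither factor is an isomorphism since both have a path (not an isomorphism) as a component; this shows $g$ is not irreducible. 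The main subtlety is checking the intermediate string $w'$ is actually a legal homotopy string (conditions (1)--(4) in Section~\ref{sec:strings}); this is where gentleness of $\Lambda$ is used, via Lemma~\ref{lemma: zero squares} to ensure the relevant compositions vanish.

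For \ref{not-irred:graph}: if $g \in \graph{v}{w}$ has non-isomorphism components $f_L$ and $f_R$ at both endpoints, then (using the endpoint conditions \LGone/\LGtwo\ and their duals) the ``middle'' part of $g$ is a string of isomorphisms over a common substring $u$ of $v$ and $w$. I would factor $g$ through the graph map $Q_v \to Q_u$ determined by the overlap at the left endpoint (keeping $f_L$, making $f_R$ trivial) followed by a graph map $Q_u \to Q_w$ determined by the overlap at the right endpoint; equivalently, one can split off one endpoint at a time. Again neither factor is an isomorphism because each retains a genuine path at one end, and one checks the intermediate string is well-formed using Lemma~\ref{lemma: basis maps determined} to see the diagram is forced. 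For \ref{not-irred:path} and its dual \ref{not-irred:dual}: if $g = (g_s, \ldots, g_0)$ has leftmost component $g_s = ab$ with $a, b$ non-stationary, then the component $g_s$ factors in the module category as $P(\cdot) \xrightarrow{b} P(\cdot) \xrightarrow{a} P(\cdot)$ (Proposition~\ref{prop: paths} and Convention~\ref{con:maps}), and this factorisation of the single leftmost component extends to a factorisation of $g$ itself through the string complex obtained by ``splicing in'' the intermediate projective at the left end — that is, replacing the homotopy letter carrying $g_s$ by two letters. Since $a$ and $b$ are each non-stationary, neither resulting map is an isomorphism.

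I expect the main obstacle to be the bookkeeping in cases \ref{not-irred:double} and \ref{not-irred:graph}: one must verify in each sub-case (direct vs.\ inverse orientations of $v_L, v_R, w_L, w_R$, and endpoint vs.\ interior positions) that the intermediate homotopy string genuinely satisfies the gentleness-derived conditions (1)--(4), so that the factoring maps are honest morphisms of complexes and lie in $\sK$. The homotopy-category subtlety — that a chain-level factorisation survives to $\sK$ — should not cause trouble here, since single and graph maps are never null-homotopic (Lemma~\ref{lem:homotopy-graph} and Definition~\ref{def:singleton}), but one should remark on why the intermediate object and both factors remain non-zero in $\sK$. I would organise the proof as four short paragraphs, one per item, each exhibiting the relevant intermediate complex by an unfolded diagram and citing Lemmas~\ref{lemma: zero squares} and~\ref{lemma: basis maps determined} for the required compatibility checks.
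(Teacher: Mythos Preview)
Your overall strategy---factor each non-irreducible candidate through an explicitly constructed intermediate string complex---is exactly what the paper does. The paper's proof is in fact a one-diagram affair for case~\ref{not-irred:double} and a ``similar'' for the rest. However, the specific intermediate objects you propose are not quite right, and in two places they do not work.

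For \ref{not-irred:double}, writing $g=(c,d)$ with commuting square $v_0 d = c w_0$, the paper's intermediate string is
\[
(\text{left part of } v)\,\cdot\,(v_0 d)\,\cdot\,(\text{right part of } w),
\]
where $v_0 d$ is a single homotopy letter. The first factor $P_v \to P_{\text{int}}$ is a \emph{graph} map: identities on the left part of $v$ and on the source of $v_0$, and the path $d$ at the target. The second factor is likewise a graph map: the path $c$ at the source of $v_0 d$, identities on the right part of $w$. Your description (``$Q_{w'}$ receives a single map from $Q_v$ with component $f_L$'') has the first factor as a single map, which is incompatible with your own stipulation that the left part of $w'$ agrees with $v$---that forces identities there, hence a graph map. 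The point is that each factor has exactly one non-isomorphism endpoint.

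For \ref{not-irred:graph}, taking $u$ to be merely the common substring does not in general give a factorisation: the endpoint conditions \LGtwo/\RGtwo\ for $P_v \to P_u$ require $v_L$ inverse (or zero) and $v_R$ direct (or zero), which need not hold. The correct intermediate is asymmetric, e.g.\ $(\text{left of } w)\cdot u \cdot (\text{right of } v)$; then the first factor uses $f_L$ and identities, the second uses identities and $f_R$, and each is a genuine graph map with one non-isomorphism endpoint. Your parenthetical ``split off one endpoint at a time'' is the right intuition; the stated intermediate $u$ is not.

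For \ref{not-irred:path} and \ref{not-irred:dual} your description is fine and matches the paper. Finally, the citations to Lemmas~\ref{lemma: zero squares} and~\ref{lemma: basis maps determined} are not needed here (the paper invokes neither), and your remark that ``single maps are never null-homotopic'' is false in general---only those in $\singleone{v}{w}$ are---though this does not matter once the factors are correctly identified as graph maps.
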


\begin{proof}
\ref{not-irred:double} 
 Write $g = (c,d)$ where $c$ and $d$ are paths in the quiver.  Then
  $g$ is, up to inverting $v$ and $w$, given by the diagram to the
  left, and the factorisation of $g$ is given by the diagram to the
  right. 
\[
\xymatrix@=1.2pc{
\xydot \ar@{-}[r] &\cdots \ar@{-}[r] &\xydot \ar[r]^{v_0} \ar[d]_{c} &
\xydot \ar[d]^{d} \ar@{-}[r] &\cdots \ar@{-}[r] & \xydot \\
\xydot \ar@{-}[r] &\cdots \ar@{-}[r] &\xydot \ar[r]^{w_0} & \xydot
\ar@{-}[r] & \cdots \ar@{-}[r] & \xydot
}
\qquad
\xymatrix@=1.2pc{
\xydot \ar@{-}[r] \ar@{=}[d] &\cdots \ar@{-}[r] &\xydot \ar[r]^{v_0} \ar@{=}[d] &
\xydot \ar[d]^{d} \ar@{-}[r] &\cdots \ar@{-}[r] & \xydot \\
\xydot \ar@{-}[r] &\cdots \ar@{-}[r] &\xydot \ar[r]^{v_0 d} \ar[d]^{c} &
\xydot \ar@{=}[d] \ar@{-}[r] &\cdots \ar@{-}[r] & \xydot \ar@{=}[d]\\
\xydot \ar@{-}[r] &\cdots \ar@{-}[r] &\xydot \ar[r]^{w_0} & \xydot
\ar@{-}[r] & \cdots \ar@{-}[r] & \xydot
}
\]

Cases \ref{not-irred:graph}, \ref{not-irred:path} and
\ref{not-irred:dual} are similar and can be verified by drawing the
corresponding diagrams, noting in \ref{not-irred:path} and
\ref{not-irred:dual} one needs to check only single and graph maps.
\end{proof}

As a consequence, no single map in the homotopy class of a double map
is irreducible.  We now examine single maps more closely, giving a
necessary condition for irreducibility, which is then used to
eliminate further possibilities for irreducible single maps.

\begin{proposition} \label{prop:single-necessary} If $g \colon P_v
  \rightarrow P_w$ is an irreducible single map, then $g$ is of the
  form
\begin{equation}
  \xymatrix@=1pc{
    \ar@{.}[r] & \xydot \ar[r]^{v_0}  & \xydot
    \ar[d]^a  \\
    & \xydot \ar@{.}[l] & \ar[l]_-{w_0} \xydot \\
  }
\label{eq:single-irred}
\end{equation}
up to inverting $w$ or $v$, where $a \in \Gamma_1$, and $v_0$ and/or
$w_1$ may be zero.  Moreover, $g$ is in a singleton homotopy class;
see Definition~\ref{def:singleton}.
\end{proposition}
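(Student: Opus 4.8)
The plan is to argue in three stages: identify the unique non-zero component of $g$ as an arrow, pin down the surrounding unfolded diagram, and then read off that $g$ lies in a singleton homotopy class. For the first stage, recall that a single map has exactly one non-zero component, a non-stationary path $p$. If $p$ had length at least two, say $p = ab$ with $a$ and $b$ non-stationary paths in $\Gamma$, then Proposition~\ref{prop:not-irred}\eqref{not-irred:path} --- which, as its proof notes, only requires checking single and graph maps --- would show $g$ is not irreducible. Hence $p = a \in \Gamma_1$, and I write $g = (a)$ from now on.

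For the second stage, record the unfolded diagram of $g$ as
\[
\xymatrix@R=2mm{
Q_v \colon & \ar@{.}[r] & \xydot \ar@{-}[r]^{v_L} & \xydot \ar[d]^a \ar@{-}[r]^{v_R} & \xydot \ar@{.}[r] & \\
Q_w \colon & \ar@{.}[r] & \xydot \ar@{-}[r]_{w_L} & \xydot \ar@{-}[r]_{w_R} & \xydot \ar@{.}[r] & \\
}
\]
subject to the single-map conditions \Lone--\Rtwo. I claim that, up to inverting $v$ and/or $w$, one has $v_R = 0 = w_R$ with $v_L$ zero or direct and $w_L$ zero or inverse --- which is precisely the configuration \eqref{eq:single-irred}, the letters $v_0$ and $w_0$ being permitted to vanish. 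I would establish this by a finite case analysis on the orientations, or vanishing, of $v_L$, $v_R$, $w_L$, $w_R$: the conditions \Lone--\Rtwo together with the freedom to invert $v$ and $w$ leave only a handful of configurations, and for each one other than \eqref{eq:single-irred} I would exhibit an explicit factorisation $g = g_2 g_1$ through non-isomorphisms of $\sK$, hence of $\sD$, drawn in the style of the proof of Proposition~\ref{prop:not-irred}\eqref{not-irred:double}. The configurations where this is least immediate --- a non-zero \emph{direct} differential to the right of $a$ in $v$ (or, dually, in $w$), and the endpoint cases in which the single adjacent differential points the wrong way, e.g.\ $v_R = 0$ but $v_L$ non-zero and inverse --- are handled by factoring $g$ through a single or graph map that absorbs the offending differential, or, when that produces a map homotopic to a double map, by invoking the observation following Proposition~\ref{prop:not-irred} that no single map in the homotopy class of a double map is irreducible.

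For the third stage, once $g$ is known to have the form \eqref{eq:single-irred}, inverting $w$ if necessary turns its unfolded diagram into pattern $(i)$ of Definition~\ref{def:singleton}, the side conditions there being exactly \Lone\ and \Ltwo. Hence $g \in \singleone{v}{w}$, and by the remark in Section~\ref{sec:singletons} that a single map belongs to a singleton homotopy class precisely when its diagram is one of $(i)$--$(iv)$, we conclude $\homotopy{g} = \{g\}$. The main obstacle is the second stage: making the case analysis genuinely exhaustive --- in particular not overlooking the degenerate endpoint cases --- and verifying in each discarded configuration that the two factors really are non-isomorphisms; each such verification is a routine diagram chase of the kind in the proof of Proposition~\ref{prop:not-irred}, but there are several to perform.
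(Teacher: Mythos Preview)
Your proposal is correct and follows essentially the same route as the paper: the paper's proof also reduces to a case analysis on the orientations of the neighbouring homotopy letters $v_0,v_1,w_0,w_1$ and, in each configuration other than \eqref{eq:single-irred}, exhibits a factorisation of $g$ through a graph map and a single map (or two graph maps and one single map). Your staging is more explicit than the paper's --- in particular, the paper leaves the claim $a\in\Gamma_1$ implicit (it follows, as you note, from Proposition~\ref{prop:not-irred}\ref{not-irred:path}) and dispatches the ``singleton homotopy class'' clause by the reference to Definition~\ref{def:singleton} in the statement itself.

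One minor point: your fallback to ``invoking the observation following Proposition~\ref{prop:not-irred} that no single map in the homotopy class of a double map is irreducible'' is never needed. In every bad configuration the factorisation is through a graph map (projecting to or including from a shorter string) composed with a single map whose sole component is still $a$; neither factor is ever a double map. Dropping that clause would streamline your Stage~2 without loss.
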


\begin{proof}
  Suppose $g \in \single{v}{w}$ with non-zero support, by abuse of
  notation, denoted by $g\colon P \to P'$ with $P,P' \in
  \ind{\proj{\Lambda}}$. This situation is indicated below.
  \[
  \xymatrix@=1pc{
    \ar@{.}[r] & \xydot \ar@{-}[r]^{v_0}  & P
    \ar@{-}[r]^{v_1} \ar[d]^g & \xydot \ar@{.}[r]  & \\
    \ar@{.}[r] & \xydot \ar@{-}[r]_-{w_0} & P' \ar@{-}[r]_-{w_1} & \xydot
    \ar@{.}[r] & \\
  }
  \]
  One can easily check that in the following cases, $g$ is not
  irreducible:
  \begin{itemize}
  \item $v_0$ and $v_1$ are both non-zero, or precisely one is
    non-zero and it has $P$ as source.
  \item $w_0$ and $w_2$ are both non-zero, or precisely one is
    non-zero and it has $P'$ as target.
  \end{itemize}
  Indeed, if both conditions hold $g$ factors as two graph maps and
  one single map (with $g$ as its non-zero support), and if only one
  holds then $g$ factors as one graph map and one single map.
\end{proof}

\begin{corollary}\label{cor:single}
  Suppose $g \in \single{v}{w}$ has unfolded diagram taking the form
  \eqref{eq:single-irred} in
  Proposition~\ref{prop:single-necessary}. Then $g$ is {\bf not}
  irreducible if:
\begin{enumerate}[label=(\arabic*)]
\item \label{item:indirect} $v$ (or $w$) is not a uniformly oriented
  homotopy string;
\item \label{item:bigger-than-1} any homotopy letter of $v$ and $w$ is
  a path of length longer than $1$.
\end{enumerate}
\end{corollary}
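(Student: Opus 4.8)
\emph{Proof sketch.} I would prove both parts in the style of Proposition~\ref{prop:not-irred}: in each case produce an explicit factorisation $g = g_2 g_1$ of $g$ through some complex, drawn on unfolded diagrams, in which neither $g_1$ nor $g_2$ is an isomorphism, so that $g$ is not irreducible. By Proposition~\ref{prop:single-necessary} we may assume $g$ is in the normal form \eqref{eq:single-irred}: its unique non-zero component is an arrow $a \in \Gamma_1$ sitting at an endpoint of $v$ (with the adjacent homotopy letter $v_0$ direct, or $v_0 = 0$) and at an endpoint of $w$ (with the adjacent homotopy letter $w_0$ inverse, or $w_0 = 0$), subject to $v_0 a = 0$ and $a w_0 = 0$. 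The hypotheses on $v$ and on $w$ in \ref{item:indirect} and \ref{item:bigger-than-1} are exchanged by reversing all arrows of $\Gamma$, so it suffices to argue under the hypothesis on $v$; the hypothesis on $w$ is then strictly dual.

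\emph{Part \ref{item:indirect}.} Suppose $v$ is not uniformly oriented. Writing $v = \prod_{r=n}^1 (v_r, i_r, j_r)$, since the terminal letter $v_1 = v_0$ is direct there is a least index $k \ge 2$ with $(v_k, i_k, j_k)$ inverse; put $\tilde v \coloneqq \prod_{r=k-1}^1 (v_r, i_r, j_r)$, the terminal (direct) run of $v$, so that $v = v'' \, (v_k, i_k, j_k) \, \tilde v$ with $v''$ possibly trivial, and $\tilde v$ is a proper homotopy substring of $v$ containing the support vertex of $g$ together with its adjacent letter $v_0$. The unfolded diagram which is the identity on the common substring $\tilde v$ and zero on the part $v'' \, (v_k, i_k, j_k)$ satisfies the left endpoint condition \LGtwo\ — the endpoint letter on the side of $v$ being the inverse letter $(v_k, i_k, j_k)$, while that on the side of $\tilde v$ is zero — and, since the right endpoint is a common finite endpoint of $v$ and $\tilde v$, the right endpoint condition \RGtwo\ holds vacuously; hence it defines a graph map $h \in \graph{v}{\tilde v}$, which is not an isomorphism because $\tilde v$ is a proper substring of $v$. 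Since $h$ is the identity at the support vertex and the letter $v_0$ is unchanged, $g$ factors as $g = g' h$ with $g' \in \single{\tilde v}{w}$ the single map having the same component $a$ (the conditions \Lone\ and \Ltwo\ for $g'$ reduce to $v_0 a = 0$ and $a w_0 = 0$); as $g'$ is not an isomorphism, $g$ is not irreducible.

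\emph{Part \ref{item:bigger-than-1}.} Suppose some homotopy letter of $v$ or $w$ is a path of length $\ge 2$. By Part~\ref{item:indirect} we may assume $v$ and $w$ are uniformly oriented, and by duality that the long letter $p$ belongs to $v$; write $p = p_L p_R$ as a product of two non-stationary paths, and let $P(m)$ be the indecomposable projective at the vertex where $p_L$ and $p_R$ meet. The plan is to use the factorisation of the differential component of $Q_v$ labelled $p$, namely the passage through $P(m)$, to split $g$ as a composite of a single map and a graph map whose one non-identity endpoint component is $p_L$ (respectively $p_R$): this is the analogue, for a long component of the differential rather than a long component of the map, of the factorisations drawn in the proof of Proposition~\ref{prop:not-irred}\ref{not-irred:path} and \ref{not-irred:dual}. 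When $p = v_0$ is the letter adjacent to the support, the relation $v_0 a = 0$ together with $v_0 = p_L p_R$ is exactly what is needed (and, via Definition~\ref{def:singleton-double}, exhibits a double map in the homotopy class of the resulting map, which by Proposition~\ref{prop:not-irred}\ref{not-irred:double} cannot be irreducible); when $p = v_j$ lies away from the support, one carries out the corresponding interior factorisation directly. Each instance is checked by drawing the relevant unfolded diagram.

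The main obstacle is Part~\ref{item:bigger-than-1}: organising the case analysis on the position and orientation of the long letter $p$, and verifying in each case both that the intermediate complex is genuinely not isomorphic to $Q_v$ or $Q_w$ and that both factors are non-isomorphisms — this is precisely where the hypothesis that $p$ splits into two non-stationary paths enters. Part~\ref{item:indirect}, by contrast, is routine once the truncating graph map above has been written down.
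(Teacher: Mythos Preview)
Your proposal is correct and follows essentially the same approach as the paper: in both parts you factor $g$ as a graph map onto a shortened string followed by a single map with the same component $a$. The paper's entire proof consists of two unfolded diagrams with no commentary; your Part~\ref{item:indirect} matches the first diagram exactly, and your Part~\ref{item:bigger-than-1} matches the second once you observe that after reducing to uniformly oriented $v$ (via Part~\ref{item:indirect}) all positions of the long letter are handled uniformly --- replace $p = p_L p_R$ by $p_R$, drop everything to its left, and take the graph map with leftmost component $p_L$ (the relation $v_{j+1}\,p_L = 0$ needed for this to be a chain map follows from the homotopy string condition on $v_{j+1}v_j$ and gentleness). No separate case split on whether $p$ is adjacent to the support is required. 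Your parenthetical about exhibiting a double map in the homotopy class is muddled and should be dropped: you need the two factors to be non-isomorphisms, not non-irreducible, and the factorisation itself already gives that.
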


\begin{proof}
The following diagrams indicate the factorisations:
  \[
  \xymatrix@=1.4pc{
    & \xydot \ar@{.}[l] & \ar[l]_-{v_k} \xydot \ar@{=}[d] \ar[r]^{v_{k-1}} & \cdots
    \ar[r] & \xydot \ar@{=}[d] \ar[r]^{v_0}  & \xydot
    \ar@{=}[d] \\
    & & \xydot \ar[r]^-{v_{k-1}} & \cdots \ar[r] & \xydot \ar[r]^-{v_0} & \xydot \ar[d]^g & & \\
    & & & & \xydot \ar@{.}[l] & \xydot \ar[l]_-{w_0} \\
  }
\qquad
  \xymatrix@=1.4pc{
    \cdots  \ar[r] & \xydot  \ar[r]^-{ab} \ar[d]^{a}& \xydot \ar@{=}[d] \ar[r]^{} & \cdots
    \ar[r] & \xydot \ar@{=}[d] \ar[r]^{}  & \xydot
    \ar@{=}[d] \\
    & \xydot \ar[r]^-{b} & \xydot \ar[r]^-{} & \cdots \ar[r]
    & \xydot \ar[r]^-{} & \xydot \ar[d]^g \\
    & & & & \xydot \ar@{.}[l] & \xydot \ar[l]_-{w_0} \\
  }
  \]
\end{proof}

\subsection{Proof of Theorem~\ref{thm:irreducible}}

We start by setting up the notation for the section. Throughout $w \in
\strings$, $w'$ and map $f' \colon P_w \to P_{w'}$ will be the
homotopy string and map output at Steps 1--7, and $w^+$ and $f^+
\colon P_w \to P_{w^+}$ the homotopy string and map output at Steps
7--9. Note that $f'$ differs from $f^+$ if and only if $w^+ = \theta
w'$, where $\theta = (\theta_m,j_m,i_m) \cdots (\theta_1,j_1,i_1)$ is
an inverse antipath.  Write $f^+ = (f_k,\ldots,f_0)$ where, using the
notation from Algorithm \ref{alg:leftirred}, $k \in \{0,r-1,r,n\}$.

\begin{lemma}\label{lem:antipath}
We have the following:
  \begin{enumerate}[label=(\arabic*)]
  \item $f^+$ is a well-defined map.
  \item $f'$ factors through $f^+$.
  \item For each $v \in \strings$ such that $vw'$ is defined and
    such that the components of $f'$ also determine a map $g \colon
    P_v \rightarrow P_{vw'}$, the map $g$ factors through $f^+$.
  \end{enumerate}
\end{lemma}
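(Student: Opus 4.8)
The plan is to treat the three parts in sequence, since (2) and (3) build on the well-definedness established in (1). For part (1), the point is that $f' \colon P_w \to P_{w'}$ is already a chain map by the construction in Steps 1--7 (the relevant square either commutes as paths in the quiver or involves a zero component by gentleness), so the only thing to check is that prepending the inverse antipath $\theta = (\theta_m, j_m, i_m) \cdots (\theta_1, j_1, i_1)$ does not disturb this. I would argue that the leftmost non-zero component of $f'$ (the component labelled $a$, or the identity, in the Step~3/5/6 diagrams of Figure~\ref{fig:irred}) sits at a degree strictly below the degrees occupied by $\theta$ inside $P_{w^+}$, because $\theta$ consists of inverse homotopy letters attached on the left; hence $f^+ = (f_k, \ldots, f_0)$ with exactly the same components as $f'$ really is a well-defined map $P_w \to P_{w^+}$, there being no new squares to verify at the interface. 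This is essentially Lemma~\ref{lemma: basis maps determined}: the components determine the map, and no new non-zero squares are created because the extra summands of $P_{w^+}$ coming from $\theta$ are not hit.

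For part (2), I would exhibit the factorisation $f' = f^+ \circ g'$ (composing left-to-right, $f' = g' f^+$ in the paper's convention), where $g' \colon P_{w^+} \to P_{w'}$ is the graph map that is the identity on the common substring $w'$ and zero on the $\theta$-part. Concretely, $P_{w'}$ embeds into $P_{w^+}$ as the subcomplex obtained by deleting the $\theta$-summands, and $g'$ is the obvious retraction; since $f^+$ and $f'$ have identical components, and those components all land in the $w'$-part of $P_{w^+}$, the composite $g' f^+$ recovers $f'$. One must check $g'$ is a genuine chain map: the only squares to worry about are those straddling the boundary between the $\theta$-summands and the $w'$-summands of $P_{w^+}$, and there the relevant differential component is either zero or composes to zero by the homotopy-string conditions (1)--(4) and gentleness, exactly as in the proof of Lemma~\ref{lemma: zero squares}.

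Part (3) is the analogue of (2) but with an arbitrary $v \in \strings$ with $vw'$ defined, for which the components of $f'$ also assemble into a map $g \colon P_v \to P_{vw'}$. Here I would observe that $vw^+$ is also a well-defined homotopy string (prepending $\theta$ to $w'$ does not interact with $v$, which is attached on the right of $w'$), that the components of $f'$ likewise define a map $\widetilde{g} \colon P_v \to P_{vw^+}$ by the same degree argument as in (1), and that $g = \widetilde{g} \, g''$ where $g'' \colon P_{vw^+} \to P_{vw'}$ is again the graph-map retraction killing the $\theta$-summands. The verification that $\widetilde{g}$ is well-defined and that $g''$ is a chain map is identical to the $v$-trivial case.

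\medskip\noindent\emph{Main obstacle.} The delicate point throughout is \textbf{well-definedness at the interface with $\theta$}: one must be sure that attaching the maximal inverse antipath $\theta$ on the left does not force any new non-zero component of $f^+$ (which would contradict "$f^+$ has the same components as $f'$") and does not create a non-commuting square. This reduces to a careful look at the degree of the leftmost active component of $f'$ relative to the degrees swept out by $\theta$ in $P_{w^+}$, together with the gentleness-based vanishing of boundary compositions — morally the same mechanism as Lemmas~\ref{lemma: zero squares} and \ref{lemma: basis maps determined}, but one has to confirm that Step~8's maximality of $\theta$ is exactly what is needed. Everything else is bookkeeping with the explicit diagrams of Figure~\ref{fig:irred}.
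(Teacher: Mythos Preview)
Your plan for parts (2) and (3) is more explicit than the paper's, which simply says ``easily verified'' and records the single crucial observation; your graph-map retraction $g' \colon P_{w^+} \to P_{w'}$ is indeed the right factorising map. However, your argument for part (1) has a genuine gap, and it is exactly the point the paper singles out.

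Your claim that the leftmost non-zero component $f_k$ of $f'$ sits in a degree \emph{strictly below} the degrees occupied by $\theta$ is false in Steps~3, 5 and~6: the component $f_k$ (the arrow $a$) lands precisely at the vertex shared between $w'$ and $\theta$, in the same degree as the rightmost $\theta$-vertex. The inverse letter $\theta_1$ is then an \emph{outgoing} differential from that vertex in $P_{w^+}$, so there \emph{is} a new commutativity condition at the interface: one must check that $f_k \theta_1 = 0$. This is not a degree fact; it is a gentleness fact. For instance, in Step~3 one has $w_r a \neq 0$ while the homotopy-string condition forces $(w_r a)\theta_1$ to contain a length-two relation, and since $w_r a$ is a path this relation must be $a\theta_1$; similarly in Steps~5 and~6. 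The paper's entire proof consists of this one sentence. You correctly identify in your ``Main obstacle'' paragraph that a gentleness-based vanishing is needed at the boundary, but your actual argument in part~(1) wrongly asserts there is nothing to check.

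A smaller point on part~(3): in the composition $vw'$, the string $v$ sits on the \emph{left} of $w'$, the same side as $\theta$, so $vw^+ = v\theta w'$ is not in general a homotopy string and your proposed factorisation through $P_{vw^+}$ does not make sense as written. The factorisation the paper has in mind goes through $f^+ \colon P_w \to P_{w^+}$ directly, using that $\theta$ is maximal; this is again routine once $f_k\theta_1 = 0$ is in hand.
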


\begin{proof}This is easily verified.  Observe that if $f_k$ is the
  leftmost component of $f^+$, and if an antipath was added in Step 8
  such that $f_k\theta_1$ occurs in the diagram, then $f_k \theta_1 =
  0$. 
\end{proof}

The following lemma shows that if there is a graph map with source
$P_w$ satisfying certain criteria, then at least one of the maps $f^+$
and $f_+$ is a graph map.

\begin{lemma}\label{lem:biggest}
  Let $w$ be a non-trivial homotopy string.  If there exists $g \in
  \graph{w}{v}$ starting after the left endpoint of $w$ and stopping
  with an isomorphism at the right endpoint of $w$, then $f^+$ is a
  graph map and $g$ factors through $f^+$.
\end{lemma}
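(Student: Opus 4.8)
The plan is to analyse which step of Algorithm~\ref{alg:leftirred} produces $f^+$ and to match this step against the left-endpoint condition of the given graph map $g$.

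First I would record the shape of $g$. Write $w = w_n\cdots w_1$. Since $g \in \graph{w}{v}$ stops with an isomorphism at the right endpoint of $w$, its overlap region is a right substring $w_\ell\cdots w_1$ of $w$, and since $g$ starts after the left endpoint of $w$ we have $\ell < n$; hence the letter $w_{\ell+1}$ is defined and one of the left-endpoint conditions \LGone, \LGtwo of Definition~\ref{def:graphmaps} holds at the vertex between $w_{\ell+1}$ and $w_\ell$. I would then note the dichotomy: if $w_{\ell+1}$ is inverse then \LGtwo can hold, and since the direct antipath $\psi_w = w_n\cdots w_{r+1}$ removed in Step~2 consists of direct arrows, $w_{\ell+1}$ inverse forces $\ell+1 \le r$; if $w_{\ell+1}$ is direct then \LGone holds, supplying (a scalar multiple of) a non-stationary path $f_L$ through which $w_{\ell+1}$ factors together with the corresponding letter of $v$.

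Next I would show $f^+ \ne 0$ and that it is a graph map. The algorithm outputs zero only at Step~7 (or Step~8.5 in the infinite case), and this happens only if $w$ is a direct antipath or trivial. The hypothesis excludes the trivial case; for $w$ a direct antipath one checks that the left-endpoint condition above — now necessarily \LGone with $w_{\ell+1}$ a direct arrow — together with $g$ terminating at an isomorphism at the right endpoint forces the hypotheses of Step~6, so $f^+ \ne 0$. By Remark~\ref{rem:differentmaps}(1), $f^+$ is then a graph map or a single map. To exclude the latter I would verify that the identity components of $f^+$ cover at least the overlap region $w_\ell\cdots w_1$ of $g$: when $w_{\ell+1}$ is inverse the algorithm's modification occurs at $w_r$ and to its left with $r \ge \ell+1$, so $w_\ell\cdots w_1$ is untouched; when $w_{\ell+1}$ is direct, the pertinent step (Step~1, 3, 5 or 6) is precisely the one whose output keeps $w_\ell\cdots w_1$ as an isomorphism overlap while splitting off an arrow $a$ compatible with the factorisation of $w_{\ell+1}$ given by \LGone. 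In every case $f^+$ acquires a genuine overlap region comprising at least the letters $w_\ell\cdots w_1$, hence is a graph map.

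Finally, for the factorisation $g = h\circ f^+$, I would build $h\colon P_{w^+}\to P_v$ from the unfolded diagrams: on the common right portion $w_\ell\cdots w_1$, where both $g$ and $f^+$ consist of isomorphisms, take $h$ to agree with $g$; on the part of $w^+$ strictly to the left — the modified homotopy letter output at Step~1/3/5/6 together with the maximal inverse antipath $\theta$ prepended at Step~8 — use the left-endpoint datum $f_L$ of $g$, which by the previous paragraph factors the split-off arrow $a$, to define the remaining components of $h$, matching the vanishing of $g$ further to the left with the vanishing of the composite through $\theta$ guaranteed by Lemma~\ref{lem:antipath}. One then checks that $h$ is a well-defined chain map and that $h\,f^+ = g$ componentwise. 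The main obstacle I anticipate is the case where the overlap of $g$ abuts the direct antipath $\psi_w$, i.e.\ $w_{\ell+1}$ is a direct arrow and \LGone holds: here one must confirm that the unique arrow $a$ split off by the algorithm at Step~1/3/5/6 is exactly the one compatible with the factorisation of $w_{\ell+1}$ forced by \LGone — so that $f^+$ really is a graph map — and, relatedly, check the well-definedness of $h$ at the junction with the prepended antipath $\theta$.
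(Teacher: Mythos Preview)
Your overall architecture---locating the overlap of $g$ as a right substring $w_\ell\cdots w_1$, arguing that the algorithm's modification occurs at an index $r\ge \ell+1$ so that $f^+$ retains this overlap, and then reading off a factorisation $h$---is sound and matches the paper's ``straightforward case analysis''. But there is one genuine gap, and it is precisely the case you flag as the main obstacle.

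When $w_{\ell+1}$ is a direct \emph{arrow} (so in particular when $w$ is a direct antipath), you propose to show that Step~6 of the algorithm fires. This is not what happens: the case is vacuous. If $w_{\ell+1}$ is a single arrow and \LGone\ holds, the commuting square forces $w_{\ell+1}=f_L\cdot v'_L$ as paths with $f_L$ non-stationary and $v'_L$ a non-trivial direct letter, so the right-hand side has length $\ge 2$ while the left-hand side has length $1$---a contradiction. Thus no graph map $g$ satisfying the hypotheses exists when $w_{\ell+1}$ is a direct arrow, and in particular $w$ cannot be a direct antipath. This is exactly the paper's first sentence: ``$w$ is not an antipath, so neither Steps~6 nor~7 occur.'' Your argument that the right-endpoint condition of $g$ forces the hypotheses of Step~6 does not go through (the \RGtwo\ condition at the right endpoint only tells you the next letter of $v$ is inverse or zero, which does not produce the required arrow $a$ with $w_1 a=0$).

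A smaller point: in the non-vacuous case $w_{\ell+1}$ direct, your phrasing ``splitting off an arrow $a$ compatible with the factorisation of $w_{\ell+1}$'' conflates the index $\ell+1$ with the algorithm's index $r$. In general $\ell+1\le r$ (since a direct $w_{\ell+1}$ of length $\ge 2$ is not part of the antipath $\psi_w$), and the arrow split off in Steps~3/5 is at position $r$, not $\ell+1$. This does not break your factorisation---the overlap of $f^+$ still contains $w_\ell\cdots w_1$---but the arrow $a$ need have no relation to the decomposition of $w_{\ell+1}$ supplied by \LGone.
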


\begin{proof}
  It follows from the hypotheses of the lemma that $w$ is not an antipath,
  so neither Steps 6 nor 7 occur.  A straighforward case analysis
  shows that $g$ factors through $f^+$ even if $v$ is not a homotopy
  substring of $w^+$.
\end{proof}

\begin{lemma} \label{lem:substring} Suppose $f^+ \in \graph{w}{w^+}$
  and $g \colon P_w \to P_u$ is a map such that $\support{g}$ is a
  homotopy substring of $\support{f^+}$. If $g$ is not supported on
  the source of $f_k$, then $g$ factors through $f^+$.
\end{lemma}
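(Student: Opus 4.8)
The strategy is to show directly that $g$ factors through $f^+$ by constructing the second factor explicitly. Since $f^+$ is a graph map, its non-zero components are isomorphisms (identities, by our conventions) along the homotopy substring $\support{f^+} = \prod_{k=n'}^{l}(w_k,i_k,j_k)$ of $w$, together with possibly one extra component at each endpoint coming from the endpoint conditions \LGone/\RGone\ (or an extension along a maximal inverse antipath added in Step 8). The hypothesis that $\support{g}$ is a homotopy substring of $\support{f^+}$ means that $g$ is supported on a sub-interval of the projectives over which $f^+$ is an isomorphism; the extra hypothesis that $g$ is \emph{not} supported on the source of $f_k$ (the leftmost component of $f^+$) means this sub-interval lies strictly to the right of $P(\phi_w(n'))$, i.e.\ $\support{g}$ is a homotopy substring of $\prod_{k=n'-1}^{l}(w_k,i_k,j_k)$ inside $\support{f^+}$.

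\textbf{Key steps.} First I would recall that every component of $f^+$ along $\support{f^+}$ is an isomorphism $P(\phi_w(i)) \to P(\phi_{w^+}(i))$, so on the relevant sub-interval of cohomological degrees the complexes $P_w$ and $P_{w^+}$ are literally identical term-by-term and the differentials between those terms agree. Therefore I can \emph{define} a map $h \colon P_{w^+} \to P_u$ by transporting each non-zero component of $g$ along the inverse of the corresponding isomorphism component of $f^+$, and setting $h$ to be zero on all other summands. Concretely, if $g = (g_s,\ldots,g_0)$ with support inside $\support{f^+}$, then $h$ has the `same' components (up to composing with the identity isomorphisms) sitting over the corresponding projectives of $P_{w^+}$. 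The second step is to check that $h$ is a chain map: the commutativity squares for $h$ are exactly the commutativity squares for $g$, since in the region where $h$ is non-zero the differentials of $P_{w^+}$ coincide with those of $P_w$; the only new squares to check are at the boundary of $\support g$, where one edge of the square is a differential component of $P_{w^+}$ that either already appeared in $P_w$ (if it lies inside $\support{f^+}$) or composes to zero because $g$ is a genuine component of a single/double/graph map and hence satisfies the relevant endpoint relations \Lone--\Rtwo. The third step is to verify $f^+ h = g$: along $\support g$ the composite $f^+ h$ is (identity)$\circ$(component of $h$) $=$ (component of $g$); away from $\support g$ both sides are zero. This uses crucially that $g$ is not supported on the source of $f_k$ — otherwise the leftmost component $f_k$ of $f^+$ (which may be a non-isomorphism, e.g.\ a path produced in Step 3 or Step 5, or the first letter $\theta_1$ of an added antipath) could interfere with the factorisation and $f^+ h$ would pick up an unwanted term.

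\textbf{Main obstacle.} The delicate point is the boundary analysis in step two and three: one must confirm that no component of $f^+$ lying just outside $\support g$ (in particular $f_k$, or an endpoint component governed by \LGone/\RGone, or an antipath letter $\theta_i$) contributes to $f^+ h$ at a degree where $g$ is non-zero, and that the chain-map condition for $h$ does not force extra non-zero components of $h$ outside $\support g$. This is where the hypothesis ``$g$ is not supported on the source of $f_k$'' does the work, pushing $\support g$ strictly into the region where $f^+$ is an honest isomorphism; combined with Lemma~\ref{lem:antipath}(1) (which tells us $f_k\theta_1 = 0$ when an antipath is added) and the defining relations of single/double/graph maps, the argument closes. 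I expect this to be a short verification once the map $h$ is written down, so the bulk of the write-up is simply drawing the unfolded diagram, defining $h$ by transport along the isomorphism components of $f^+$, and reading off $f^+ h = g$ directly from the picture.
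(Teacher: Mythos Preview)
Your proposal is correct and follows essentially the same approach as the paper. The paper's proof is even more terse: it simply observes that $f_{k-1},\ldots,f_0$ are isomorphisms, so $\support{g}$ is also a homotopy substring of $w^+$, defines the restriction $g'\colon P_{w^+}\to P_u$ with the same components as $g$, and notes that $g = f^+ g'$; your boundary analysis makes explicit exactly the verification the paper leaves as ``straightforward''.
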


\begin{proof}
  If $g \in \graph{w}{u}$, then since $f_{k-1},\ldots,f_0$ are
  isomorphisms, $\support{g}$ is also a homotopy substring of $w^+$. It
  is straightforward to check that the restriction $g' \colon P_{w^+} \to
  P_u$ of $g$ is also a graph map, and $g = g' f^+$. Similarly for $g
  \in \single{w}{u}$.
\end{proof}

There is a dual statement of Lemma \ref{lem:substring} for $f_+$.

\begin{lemma}\label{lem:nooverlap}
  Let $w \in \strings$ be such that $f^+ \in \graph{w}{w^+}$. If $g
  \colon P_w \to P_u$ is such that $\support{f^+}$ and $\support{g}$
  have no overlapping part, then $f_+ \neq 0$ and $g$ factors through
  $f_+$.
\end{lemma}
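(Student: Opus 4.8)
The plan is to pin down what the support of a graph map output by Algorithm~\ref{alg:leftirred} looks like, observe that the dual algorithm fixes the complement of this support, and then invoke the dual of Lemma~\ref{lem:substring}.

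First I would note that, since $f^+$ is a graph map, it must be an output of Step~1, 3, 4 or~5 of Algorithm~\ref{alg:leftirred} (the outputs of Steps~6 and~7 are, respectively, a single map and the zero map). Inspecting Figure~\ref{fig:irred}, in each of these cases the components of $f^+$ are isomorphisms on every homotopy letter of $w$ save possibly one, and they reach the right endpoint $w_1$. Hence the complement $w^{(\ell)}$ of $\support{f^+}$ in $w$ is a \emph{left} substring of $w$: it equals the maximal direct antipath $\psi_w = w_n\cdots w_{r+1}$ removed in Step~2 (together with the one extra inverse letter $w_r$ in the case of Step~4), or all of $w$ in the case of Step~1. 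The non-overlap hypothesis then forces $\support{g}$ to be a homotopy substring of $w^{(\ell)}$. If $w^{(\ell)} = w$ then $g=0$ and there is nothing to prove; otherwise $w^{(\ell)}$ contains the non-trivial direct antipath $\psi_w$, so in particular $w$ is neither trivial nor an antipath.

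Next I would run the dual algorithm on $w$. Since $w$ is neither trivial nor an antipath, it does not halt at Step~7$'$, so $f_+\neq 0$. Moreover the dual algorithm alters $w$ only in the homotopy letters at or to the right of the letter $w_{s+1}$ it modifies, where $w_s\cdots w_1$ is the maximal inverse antipath that is a right substring of $w$; as $\psi_w$ is a direct antipath it shares no letter with $w_s\cdots w_1$, so $s\leq r$. Generically $s<r$ (this is exactly the case where $w_r$ is not an inverse arrow), and then the whole of $w^{(\ell)}$ lies in the isomorphism part of $f_+$ and is a homotopy substring of $w_+$. Since $\support{g}\subseteq w^{(\ell)}$ therefore does not meet the unique non-isomorphism component of $f_+$, the dual of Lemma~\ref{lem:substring} shows that the restriction of $g$ to $P_{w_+}$ is a map $g'\colon P_{w_+}\to P_u$ with $g = g'f_+$.

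The point needing extra care --- and what I expect to be the main obstacle --- is the boundary case $s=r$, where $w_r$ is an inverse arrow and the dual algorithm modifies the rightmost letter $w_{r+1}$ of $\psi_w$ (so that $f^+$ came from Step~3 or~4). Here one can no longer quote Lemma~\ref{lem:substring} directly, and I would dispose of this by a short direct argument at the ``valley'' vertex between $w_{r+1}$ and $w_r$: the endpoint conditions defining single, double and graph maps are restrictive enough that a map $g$ with $\support{g}\subseteq\psi_w$ either does not involve that vertex at all, in which case the previous paragraph applies verbatim, or its component there is visibly of the form $w_{r+1}\cdot(\text{path})$, which is precisely what is needed to build the factorization through the component $w_{r+1}$ of $f_+$ read off from the dual of the Step~5 diagram in Figure~\ref{fig:irred}. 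In every case $g$ factors through the non-zero map $f_+$, as claimed.
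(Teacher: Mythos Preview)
Your approach is essentially the paper's own: determine which step of Algorithm~\ref{alg:leftirred} produced the graph map $f^+$, observe that $\support{g}$ then lies in the left prefix of $w$ that the dual algorithm leaves untouched, and invoke the dual of Lemma~\ref{lem:substring}. The generic case you describe ($s<r$, reducing to Steps~3 and~5) is exactly how the paper argues.

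There are, however, a couple of genuine problems. First, a small slip: in Step~1 the complement $w^{(\ell)}$ is \emph{empty}, not all of $w$, so the hypothesis is vacuous there; your ``$w^{(\ell)}=w$'' case never arises. More seriously, your claim that ``otherwise $w^{(\ell)}$ contains the non-trivial direct antipath $\psi_w$'' is not justified: $\psi_w$ may well be trivial. When it is and $f^+$ comes from Step~4 (so $r=n$), the complement is the single vertex $n$, a nonzero $g$ can certainly live there, and $w$ may be an \emph{inverse} antipath. Then the dual algorithm reaches Step~$6'$ or~$7'$, so $f_+$ need not be a graph map at all; you cannot simply quote Lemma~\ref{lem:substring}, and you must argue separately that Step~$7'$ is ruled out (i.e.\ that $f_+\neq 0$). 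The paper handles exactly this point in its Step~4 analysis: when $g$ is a single map at $P(\phi_w(r))$ and Step~$6'$ produces $f_+=(a)$, the single-map conditions force $g_0=aa'$, yielding the factorisation and in particular showing Step~$7'$ cannot occur.

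Your sketch of the boundary case $s=r$ also needs correcting. Since $w_{r+1}$ belongs to the antipath $\psi_w$, it is an \emph{arrow}, so the dual algorithm cannot reach Step~$5'$; it lands in Step~$3'$ or~$4'$ instead, and the relevant component of $f_+$ is not $w_{r+1}$. Moreover, in Step~4 the support of $g$ is allowed to reach vertex $r$ (not just $\psi_w$), and when $g$ is a graph map whose rightmost component sits there, the paper appeals to Lemma~\ref{lem:biggest} rather than Lemma~\ref{lem:substring}. Your ``short direct argument at the valley vertex'' does not yet account for either of these points. In short: the strategy is right, but the Step~4 endgame needs the explicit case split the paper carries out.
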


\begin{proof}
Write $g = (g_s, \ldots, g_0)$.
Recall the notation from Algorithm~\ref{alg:leftirred}. Only the outputs of Steps 1, 3, 4 and 5 of Algorithm~\ref{alg:leftirred} are graph maps.
We consider these possible termination points in turn.
\begin{itemize}
\item At Step 1: Algorithm \ref{alg:leftirred} cannot output $f^+$ at
  Step 1 under these conditions, since $\support{f^+} = w$.
\item At Steps 3 or 5: in these cases $\support{g}$ is a substring of
  $\psi_w$. Thus $w$ contains direct letters and the duals of Steps 6
  and 7 cannot occur. In particular, the dual of
  Algorithm~\ref{alg:leftirred} outputs a graph map $f_+$. Now apply
  Lemma~\ref{lem:substring}.
\item At Step 4: the rightmost possible support of $g_0$ is
  $P(\phi_w(r))$. If this is not attained, then $\support{g}$ is a
  substring of $\psi_w$ and one argues as in Step 3 and 5 above.

  Suppose the support of $g_0$ is $P(\phi_w(r))$. If $g \in
  \single{w}{u}$ then $w_n = w_r$ and $g$ factors through $f_+$: Steps
  $1'$, $3'$ and $4'$ yield a common substring of $\support{f_+}$ and
  $\support{f^+}$, whence we apply
  Lemma~\ref{lem:substring}; and, if Step $6'$
  occurs then we must have $g_0 = aa'$ where $a$ is the non-zero
  component of $f_+$, and this gives the desired conclusion.  In
  particular Step $7'$ never occurs and $f_+ \neq 0$.

  If $g \in \graph{w}{u}$ we have isomorphisms in all components to
  the left of $g_0$, and the conclusion follows by
  Lemma~\ref{lem:biggest}.
\end{itemize}
Thus, $f_+ \neq 0$ and $g$ factors through $f_+$, as desired.
\end{proof}

We are now ready to prove Theorem~\ref{thm:irreducible}.

\begin{proof}[Proof of Theorem \ref{thm:irreducible}]
  Let $w\in \strings$ and $f^+ = (f_k, \ldots, f_0)$ and $w^+$ be the
  outputs of Algorithm \ref{alg:leftirred}. The proof is a case
  analysis. We start with some generalities. Let $g \colon P_w \to
  P_v$ be another candidate for an irreducible map and note the
  following.
\begin{itemize}
\item Lemmas~\ref{lem:substring} and \ref{lem:nooverlap} say that any
  map $g$ not supported on the source of $f_k$ factors through $f^+$
  or $f_+$, and so is not irreducible.
\item Proposition~\ref{prop:not-irred}\ref{not-irred:double} says that
  $g$ must be a graph map or a single map.
\item Proposition
  \ref{prop:not-irred}\ref{not-irred:path}\ref{not-irred:dual} and
  Corollary \ref{cor:single}\ref{item:bigger-than-1} say that all
  components of $g$ which are not isomorphisms are arrows.
\end{itemize}
We therefore write $g=(g_s,\ldots,g_0,\ldots,g_{-t})$, where $g_0$ is
the component supported on the source of $f_k$; if $g$ is a single map
then $g=g_0$.  If $g \in \graph{w}{v}$, we fix the orientation of $v$
such that the common substring of $v$ and $w$ has the same
orientation.

We treat the three possible cases for each step of
Algorithm~\ref{alg:leftirred} which outputs $f^+$.

\smallskip
\Listcase{1} {\it The map $g$ is a graph map and $g_0$ is an arrow.}

\Listcase{2} {\it The map $g$ is a graph map and $g_0$ is an isomorphism.}

\Listcase{3} {\it The map $g$ is a single map, in which case $g_0$
  must be an arrow.} 
\smallskip

\noindent It may be useful for the reader to keep
Figure~\ref{fig:irred} on page \pageref{fig:irred} in mind.

\smallskip
\noindent{\bf The map $f^+$ is output at Step 1.} 
In this case, $g_0$ is the leftmost component of $g$.

\Case{1}  Clearly, $g$ factors through $f^+$ since $ug_0 = 0$.

\Case{2} If $t < n$ then we simply apply Lemma~\ref{lem:biggest} to
get $g$ factoring through $f_+$. If $t = n$ and $g_{-n}$ is an
isomorphism, then $g$ factors through $f^+$ by Lemma
\ref{lem:antipath}. If $t=n$ and $g_{-n}$ is an arrow, then it is clear
that $g$ factors through $f_+$.

\Case{3} For $g=g_0$ to be irreducible we need $w_n$ to be inverse or
trivial by Proposition~\ref{prop:single-necessary}. If $w_n$ is
inverse, $g$ cannot factor through $f^+$ since $u g \neq 0$. If $f_+$
is a graph map then $g$ factors through $f_+$ by
Lemma~\ref{lem:substring}. If $f_+$ is a single map, then it is the
output of Step $6'$, in which case $g_0$ is the non-zero component of
$f_+$ and the factorisation follows. If $w_n$ is trivial then $w$ is
trivial and $P_w$ is a stalk complex. It can then be easily checked
that $g$ factors through $f^+$ or $f_+$.

\smallskip
\noindent{\bf The map $f^+$ is output at Step 3.}
Here $w$ is non-trivial and $g_0$ need not be the leftmost non-zero
component of $g$.

\Case{1} Here $g_0$ is either the leftmost or rightmost non-zero
component of $g$. In the former case $w_r g_0 \neq 0$ and $g_0 = a$
(in the notation of Algorithm~\ref{alg:leftirred}). By
Proposition~\ref{prop:not-irred}\ref{not-irred:graph}, we may assume
that $g_{-t}$ is an isomorphism. Now $g$ factors through $f^+$ by
Lemma~\ref{lem:antipath} if $t<r$. If $t=r$, $f_+$ is a graph map
(argue as in Lemma \ref{lem:biggest}), and the conclusion follows by
Lemma~\ref{lem:substring}.

\Case{2} In this case $g$ does not factor through $f^+$. The
components $g_{-1},\ldots, g_{-t+1}$ are isomorphisms; $g_{-t}$ may be
an arrow. The $g_{s},\ldots,g_{1}$ are all isomorphisms if $\psi_w$ is
non-trivial, and zero otherwise. If $f_+$ is a graph map then $g$
factors through $f_+$ by Lemma \ref{lem:biggest}. If $f_+$ is not a
graph map, then $\prod_{k=r}^1(w_k,i_k,j_k)$ is an inverse antipath,
which implies $g_{-t}$ is an isomorphism. It follows that $w = u$
since adding anything to the endpoints forces either $f^+$ to be
output in Step 1, or a graph map $f_+$ in Step $1'$, whence $g$ is an
isomorphism.

\Case{3} Apply the same argument as when $g_0$ was the rightmost
non-zero component of $g$ in Case 1.

\smallskip
\noindent{\bf The map $f^+$ is output at Step 4.} 
Here Cases 1 and 3 are straightforward. For Case 1 apply
Lemma~\ref{lem:substring}. For Case 3 it is immediate that $g$ factors
through $f^+$.

\Case{2} If $g_0$ is the leftmost non-zero component then by
previously given arguments, $g$ factors through $f^+$. So assume not:
we must have that $g_{1}$ is an isomorphism, for otherwise $f^+$ would
have been output at Step 3. Applying the argument as in Case 2 above,
$g_{s}, \ldots, g_{1}$ are each isomorphisms. If $t<r-1$ then $g$
factors through $f_+$ by Lemma~\ref{lem:substring}. If $t=r-1$, $g$ is
either an isomorphism or else Algorithm~\ref{alg:leftirred}$'$ outputs
$f_+$ at Step $1'$, whence $g$ clearly factors through $f_+$.

\smallskip
\noindent{\bf The map $f^+$ is output at Step 5.} Here $f_+$ cannot be
output at Step $6'$ of Algorithm~\ref{alg:leftirred}$'$ since $w_r$ is
direct, so $f_+$ is a graph map. For Case 2, argue as in
Case 2 above. Case 3 is again a straighforward verification.

\Case{1} If $g_0$ is the leftmost non-zero component, then $g_0 = a$
(in the notation of Algorithm~\ref{alg:leftirred}) factors through
$f^+$. If $g_0$ is the rightmost non-zero component then use
Lemma~\ref{lem:biggest}.

\smallskip
\noindent{\bf The map $f^+$ is output at Step 6.} Note that the direct
antipath $w$ may be trivial.

\Case{1 and 3} If $g_0 \neq a$ (in the notation of
Algorithm~\ref{alg:leftirred}) then in the graph map case $g$ factors
through $f^+$ by Lemma~\ref{lem:biggest}. In the single map case, $w$
is a trivial homotopy string and $g$ factors through $f_+$, which is
output either at Step $1'$ or Step $6'$ of
Algorithm~\ref{alg:leftirred}$'$. Similarly when $g_0 = a$.

\Case{2} Here, the isomorphism $g_0$ is dragged down the entire
homotopy string so that $v$ has the form $v = \cdots c w d \cdots$ for
some homotopy letters $c$ and $d$. The homotopy letter $c$ cannot be
direct ($f^+$ would be output at Step 1) nor inverse ($g$ would not be
a map). Similarly, $d$ is not direct. Hence $v = wd \cdots$. If $d$ is
trivial then $g$ is an isomorphism. If $d$ is inverse then $f_+$ is
output at Step $1'$ and $g$ clearly factors through $f_+$.

\smallskip
\noindent{\bf The map $f^+=0$ is output at Step 7.} One now applies
the dual arguments for $f_+$ to get the required factorisation through
$f_+$.
\end{proof}

%============================================================================
% Derived discrete categories
\section{Application: Discrete derived categories}
%============================================================================

For the definition and background on discrete derived categories we
refer the reader to \cite{BGS,BPP1,Vossieck}. Derived-discrete
algebras are derived equivalent to either path algebras of
simply-laced Dynkin quivers or the bound path algebra $\Lambda(r,n,m)$
defined in Figure~\ref{fig:tennis}; when we refer to discrete derived
categories, we shall always mean $\Db(\Lambda(r,n,m))$.

In this section, we recover the universal Hom-space dimension bound
described in \cite{BPP1} when $\Lambda(r,n,m)$ is of finite global
dimension and extend it to the case $\Lambda(r=n,n,m)$, which has
infinite global dimension. Recall from \cite{BM} that $\Lambda(r,n,m)$
has no homotopy bands. From now on $\Lambda \coloneqq \Lambda(r,n,m)$
and $\strings(\Lambda)$ will denote homotopy strings over
$\Lambda$.
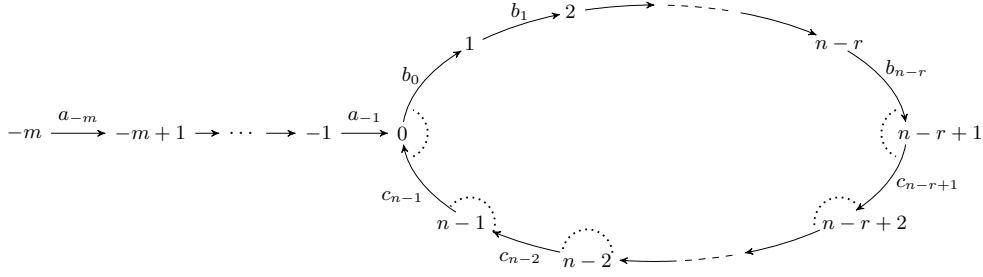
\begin{figure}
\[
\scalebox{0.85}{
\begin{tikzpicture}[xscale=1.3]
  % tail nodes
  \node (-m) at (-4.5,0) [tinyvertex] {$-m$};
  \node (-m+1) at (-3,0) [tinyvertex] {$-m+1$};
  \node (-m+2) at (-1.9,0) [tinyvertex] {$\cdots$};
  \node (-1) at (-1,0) [tinyvertex] {$-1$};
  \node (0) at (0,0) [tinyvertex] {$0$};
  % tail arrows
  \draw[->] (-m) to node[tinyvertex, above] {$a_{-m}$} (-m+1);
  \draw[->] (-m+1) to (-m+2);
  \draw[->] (-m+2) to (-1);
  \draw[->] (-1) to node[tinyvertex, above] {$a_{-1}$}(0);
  % cycle nodes
  \node (1) at (0.8,1.4) [tinyvertex] {$1$};
  \node (n-1) at (0.7,-1.4) [tinyvertex] {$n-1$};
  \node (2) at (2,1.9) [tinyvertex] {$2$};
  \node (n-2) at (2.2,-2) [tinyvertex] {$n-2$};
  \node (n-r) at (5.2,1.4) [tinyvertex] {$n-r$};
  \node (n-r+2) at (5.5,-1.4) [tinyvertex] {$n-r+2$};
  \node (n-r+1) at (6.4,0) [tinyvertex] {$n-r+1$};
  % cycle arrows
  \draw[->] (3,0) [partial ellipse=40:5:3cm and 2cm]; %(n-r)--(n-r+1)
  \draw[->] (3,0) [partial ellipse=-5:-37:3cm and 2cm]; %(n-r+1)--(n-r+2)
  \draw[->] (3,0) [partial ellipse=175:140:3cm and 2cm]; %0--1
  \draw[->] (3,0) [partial ellipse=133:112:3cm and 2cm]; %1--2
  \draw[->] (3,0) [partial ellipse=218:185:3cm and 2cm]; %(n-1)--0
  \draw[->] (3,0) [partial ellipse=248:230:3cm and 2cm]; %(n-2)--(n-1)
  \draw[->] (3,0) [partial ellipse=106:90:3cm and 2cm];
  \draw[dashed] (3,0) [partial ellipse=87:73:3cm and 2cm];
  \draw[->] (3,0) [partial ellipse=70:50:3cm and 2cm];
  \draw[->] (3,0) [partial ellipse=-49:-69:3cm and 2cm];
  \draw[dashed] (3,0) [partial ellipse=-72:-84:3cm and 2cm];
  \draw[->] (3,0) [partial ellipse=-85:-98:3cm and 2cm];
  % arrow labels
  \node at (0.1,0.9) [tinyvertex] {$b_0$};
  \node at (1.4,1.9) [tinyvertex] {$b_1$};
  \node at (6,1) [tinyvertex] {$b_{n-r}$};
  \node at (6.25,-0.8) [tinyvertex] {$c_{n-r+1}$};
  \node at (0,-1) [tinyvertex] {$c_{n-1}$};
  \node at (1.4,-1.95) [tinyvertex] {$c_{n-2}$};
  % relations
  \draw (0) [dotted, thick, partial ellipse=-65:65:0.3cm and 0.4cm];
  \draw (6,0) [dotted, thick, partial ellipse=115:245:0.3cm and 0.4cm];
  \draw (5.2,-1.4) [dotted, thick, partial ellipse=52:195:0.3cm and 0.4cm];
  \draw (0.8,-1.4) [dotted, thick, partial ellipse=-20:140:0.3cm and 0.4cm];
  \draw (2.2,-1.9) [dotted, thick, partial ellipse=5:170:0.3cm and 0.4cm];
\end{tikzpicture}}
\]
\caption{The bound quiver defining $\Lambda(r,n,m)$.}\label{fig:tennis}
\end{figure}

A \emph{subword} of a homotopy string $w$ is defined in the obvious
fashion: the left- and rightmost homotopy letters of the subword may
be (incomplete) substrings of the corresponding letters of $w$. We now
describe all the homotopy strings for a discrete derived
category. Note that, when $r=n$, by our labelling convention there are
no `$b$' arrows. 

\begin{lemma}
Consider the following homotopy strings:
\[ 
\xymatrix@C=2.8pc{
w_k \colon 
  & \xydot 
  & {\circ} \ar[l]_{a_{-1}\ldots a_{-m}} \ar[r]^-{v_k} 
  &   \circ \ar[r]^{c_{n-1}} 
  &  \cdots \ar[r]^{c_{n-r+1}}  
  & \xydot \ar[r]_{b_{n-r}\ldots b_0 a_{-1}\ldots a_{-m}} 
  & \xydot
  &
\\ 
w \colon
  & \cdots \ar[r]^{c_{n-1}}
  & \xydot \ar[r]^{c_{n-2}}
  & \cdots \ar[r]^{c_0}
  & \xydot \ar[r]^{c_{n-1}}
  & \xydot \ar[r]^{c_{n-2}}
  & \cdots \ar[r]^{c_1}
  & \xydot \ar[r]_{c_0 a_{-1}...a_{-m}} & \xydot }
\] 
where $v_k$ is the $k$-fold concatenation of  $\xymatrix@C=2.8pc{ \xydot
  \ar[r]^-{c_{n-1}} & \cdots \ar[r]^-{c_{n-r+1}} & \xydot
  \ar[r]^-{b_{n-r}\ldots b_0} & \xydot }$. Then:
\begin{enumerate}[label=(\arabic*)]
\item if $r < n$, all homotopy strings are (shifted) copies of
  subwords of the $w_k$ for $k \geq 0$;
\item if $r = n$, all homotopy strings are (shifted) copies of
  subwords of $w$ and $w_k$ for $k \geq 0$.
\end{enumerate}
\end{lemma}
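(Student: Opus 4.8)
The plan is to analyse the bound quiver $\Lambda(r,n,m)$ of Figure~\ref{fig:tennis} directly and argue that every homotopy string, being a walk in the quiver built from paths avoiding the relations, must locally look like a piece of the displayed words. First I would set up the local picture: the relations tell us that at each vertex of the central cycle ($0, 1, \ldots, n-r+1$ in the $r<n$ case, or $0, \ldots, n-1$ in the $r=n$ case) there is essentially one way to continue a path without hitting a relation, and similarly along the `tail' $-m \to -m+1 \to \cdots \to 0$; so the only paths with no subpath in $I$ are the ones listed. Concretely, I would observe that the composite $a_{-1}\cdots a_{-m}$ is the unique maximal path along the tail into $0$, that $c_0 a_{-1}\cdots a_{-m}$, and more generally $b_{n-r}\cdots b_0 a_{-1}\cdots a_{-m}$ (resp.\ $c_0 a_{-1}\cdots a_{-m}$), are the maximal relation-free paths through the vertex $0$, and that the $c$-arrows and the segment $c_{n-1},\ldots,c_{n-r+1},b_{n-r},\ldots,b_0$ are the only relation-free paths around the cycle. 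This requires only checking which length-two subpaths lie in $I$, which is immediate from the dotted arcs in Figure~\ref{fig:tennis}.

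Next I would treat a general homotopy string $u = \prod_{k} (u_k, i_k, j_k)$ as an unfolded walk and proceed along it one homotopy letter at a time. Since each $u_k$ is a relation-free path, it is one of the paths enumerated in the previous paragraph; the conditions (1)--(4) in the definition of a homotopy string (that consecutive direct letters compose to something with a subpath in $I$, consecutive inverse letters dually, and the non-cancellation conditions for a direct letter followed by an inverse one) then pin down how $u_{k}$ and $u_{k-1}$ fit together. The key point is that these gluing constraints force the walk, after possibly inverting it, to be `uniformly oriented along the cycle': once one traverses a $c$-arrow in the direct sense, the string-conditions prevent turning back, so one is obliged to continue $\ldots c_{n-1} c_{n-2} \ldots$, completing loops of the cycle and periodically inserting the $b$-detour (when $r<n$) or not (when $r=n$). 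The only places a walk can `turn around' are at the two ends $-m$ and $0$ of the tail, and the relations there dictate exactly the bracketing shown in $w_k$ and $w$. So any homotopy string is cut out of one bi-infinite uniformly-oriented periodic word, which for $r<n$ is (a shift of) $\cdots w_k w_k \cdots$ read off the $w_k$ pattern and for $r=n$ is $w$; a finite homotopy string is therefore a (shifted) subword of some $w_k$ with $k\ge 0$, plus in the $r=n$ case the possibility of lying inside the purely-cyclic word $w$ with no tail excursion.

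The step I expect to be the main obstacle is the careful bookkeeping at the vertex $0$ and along the tail: one must check that the homotopy-string conditions (1)--(4), combined with gentleness, genuinely force the precise groupings $a_{-1}\cdots a_{-m}$, $c_0 a_{-1}\cdots a_{-m}$, $b_{n-r}\cdots b_0 a_{-1}\cdots a_{-m}$ as the extreme homotopy letters — i.e.\ that a walk cannot, say, stop partway along the tail and reverse, or separate $c_0$ from the $a$'s into two letters in a way not already covered by `subword'. This is where the two cases $r<n$ and $r=n$ diverge, since when $r=n$ there are no `$b$' arrows and the cycle closes up directly, allowing the extra family of purely-cyclic strings $w$. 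I would handle this by a short finite case-check: list the at-most-two arrows into and out of each relevant vertex, list which of the four products lie in $I$, and read off the allowed local configurations; everything else is then a routine induction along the length of $u$. I would also note explicitly that infinite homotopy strings only arise when $r=n$ (the cycle $0,1,\ldots,n-1$ has full relations), consistent with the statement being about the finite homotopy strings of $\strings(\Lambda)$, and that these too are subwords of the periodic words $w$ and $w_k$.
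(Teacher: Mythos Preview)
The paper states this lemma without proof, treating it as a routine verification from the explicit form of the bound quiver in Figure~\ref{fig:tennis}; there is therefore nothing to compare against at the level of argument. Your proposal is a correct way to supply the missing details, and in fact the structure of $\Lambda(r,n,m)$ makes it cleaner than you anticipate. The crucial observation (which you gesture at but could state more sharply) is that \emph{no} vertex of the quiver has two outgoing arrows, so condition~(3) in the definition of a homotopy string forbids any direct letter from being immediately followed by an inverse one; dually, only the vertex $0$ has two incoming arrows ($a_{-1}$ and $c_{n-1}$), so an inverse letter followed by a direct one can occur only there. Hence every homotopy string, after possibly inverting, consists of an inverse prefix ending at $0$ followed by a direct suffix, which is exactly the shape of the $w_k$.

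One small imprecision: you write that the walk can ``turn around'' at the two ends $-m$ and $0$ of the tail, but in fact $-m$ is a source with a unique outgoing arrow, so no turnaround is possible there; the only genuine change of orientation happens at $0$, and $-m$ can only be a terminus. This does not affect the argument, but it is worth stating correctly. Your remark that infinite homotopy strings arise only when $r=n$ is right (the cycle has full relations precisely then) and is consistent with the paper's remark that there are no $b$-arrows in that case; the left-infinite word $w$ records exactly the resulting infinite direct antipath around the cycle.
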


\begin{lemma} \label{obs:one-map} If $r >1$ we have
  $\hom_\Lambda(P(i), P(j)) \leq 1$ for all $-m \leq i,j < n$. If
  $r=1$ we have additionally $\hom_\Lambda(P(0),P(j)) = 2$ for all $-m
  \leq j \leq 0$.
\end{lemma}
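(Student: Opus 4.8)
The plan is to translate the statement into a count of relation‑avoiding paths in $\Gamma$ and then read off the answer from the preceding description of the homotopy strings of $\Lambda(r,n,m)$. By Proposition~\ref{prop: paths} (together with Convention~\ref{con:maps}), $\hom_\Lambda(P(i),P(j))$ equals the number of paths $j \path i$ in $\Gamma$ having no subpath in $I$, where for $i=j$ the trivial path is included; so it suffices to enumerate such paths between each ordered pair of vertices of $\Gamma$.

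First I would invoke the preceding lemma. A relation‑avoiding path $p$, equipped with any admissible pair of degrees, is a homotopy letter and hence a one‑letter homotopy string; by that lemma its underlying path therefore occurs as a subword of some $w_k$ ($k\ge 0$), or, when $r=n$, of $w$ or some $w_k$. A subword consisting of a single homotopy letter cannot cross the boundary between two of the homotopy letters written down in $w_k$ (resp. $w$): two consecutive direct letters compose across a subpath of $I$ by the very definition of a homotopy string, while letters of mixed orientation do not compose as paths at all. So $p$ is a contiguous sub‑path of one of the paths literally appearing as homotopy letters of $w_k$ (resp. $w$). Reading these off from the shape of $w_k$, and of $w$ when $r=n$, one finds that every relation‑avoiding path in $\Gamma$ is either \textbf{(a)} a contiguous sub‑path of the distinguished path
\[
B \;\coloneqq\; b_{n-r}\cdots b_0\, a_{-1}\cdots a_{-m}
\]
(with $B\coloneqq c_0\,a_{-1}\cdots a_{-m}$ when $r=n$, there being no ``$b$'' arrows), or \textbf{(b)} one of the arrows $c_{n-1},\ldots,c_{n-r+1}$. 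The decisive point is that for $r>1$ the underlying walk of $B$ is injective on vertices — it traverses $-m,\dots,0,1,\dots,n-r+1$ exactly once each — whereas for $r=1$ the ``$b$'' arrows close up the cycle, so $B$ is a walk starting at $-m$ and returning to $0$, and $0$ is then the unique vertex visited twice.

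To finish, suppose $r>1$. Two distinct relation‑avoiding paths with the same ordered endpoints $(j,i)$ would have to be two sub‑paths of $B$ (impossible by injectivity), two of the arrows $c_\bullet$ (impossible, as distinct $c$‑arrows have distinct ordered endpoints along the $c$‑arc), or a sub‑path of $B$ together with some $c_l$; but the only source of a $c$‑arrow lying on $B$ is $n-r+1$, and the corresponding arrow $c_{n-r+1}$ has target $n-r+2$, which does not lie on $B$, so $B$ has no sub‑path with that source and target. Since moreover neither $B$ nor the $c$‑arrows produce a nontrivial closed walk, the only closed relation‑avoiding paths are trivial, and hence $\hom_\Lambda(P(i),P(j))\le 1$ for all $i,j$. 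For $r=1$ there are no $c$‑arrows, so $B$ — the walk $-m\to\cdots\to 0\to 1\to\cdots\to(n-1)\to 0$ — is the only source of paths: a pair $(j,i)$ with $i\neq 0$ is joined by at most one sub‑walk of $B$, while $(j,0)$ with $j\in\{-m,\dots,-1,0\}$, i.e.\ $j$ lying weakly before the first occurrence of $0$ in $B$, is joined by exactly two (the initial segment of $B$ ending at the first $0$, and that same segment followed once around the cycle); this gives $\hom_\Lambda(P(0),P(j))=2$ for $-m\le j\le 0$ and $\le 1$ in all other cases. The step I expect to need the most care is pinning down the list \textbf{(a)}–\textbf{(b)} together with the relation data at the junction vertex $0$ and at the meeting point $n-r+1$ of the two arcs: in particular, that the once‑around path producing the extra dimension when $r=1$ really avoids $I$ — equivalently, that $B$ itself has no subpath in $I$, which is part of the preceding lemma since $B$ occurs as a homotopy letter of $w_k$ — and that, for $r>1$, consecutive $c$‑arrows and a ``$b$''‑arrow followed by a $c$‑arrow never compose outside $I$. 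These are precisely the relations built into the definition of $\Lambda(r,n,m)$ (cf.\ Figure~\ref{fig:tennis}), so once they are quoted the case analysis above is routine; the degenerate cases $m=0$, $n=1$ and $r=n$ are handled verbatim, noting only that there $B$ may itself already revisit a vertex.
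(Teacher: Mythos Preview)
The paper gives no proof of this lemma; the label \texttt{obs:one-map} signals that it is meant as a direct observation from the bound quiver in Figure~\ref{fig:tennis} together with Proposition~\ref{prop: paths}. Your argument is therefore not being compared against anything, and its overall structure --- reduce to counting relation-free paths, use the preceding lemma to list them, then count --- is valid, if considerably more elaborate than the intended one-line inspection of the quiver.

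There is one genuine slip. In ruling out the possibility that a sub-path of $B$ and some $c_l$ share the same ordered endpoints, you write that ``the corresponding arrow $c_{n-r+1}$ has target $n-r+2$, which does not lie on $B$''. This is false when $r=2$: then $c_{n-r+1}=c_{n-1}$ is the unique $c$-arrow and its target is $0$, which \emph{does} lie on $B$. The conclusion you want nevertheless survives, because $B$ traverses its vertices in the order $-m,\ldots,0,\ldots,n-1$ and so contains no sub-path from $n-1$ back to $0$; you should replace the ``target not on $B$'' claim by the observation that $B$, being an injective walk for $r>1$, has a sub-path from $j$ to $i$ only when $j$ precedes $i$ along $B$, which never happens for the ordered pair $(s(c_l),t(c_l))$. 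A second, more minor omission: your dichotomy (a)/(b) does not literally cover the trivial paths at the interior vertices $n-r+2,\ldots,n-1$ of the $c$-arc (they lie neither on $B$ nor are they $c$-arrows), though of course these contribute exactly one to each $\hom_\Lambda(P(l),P(l))$ and nothing else, so the bound is unaffected.
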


\begin{lemma}\label{lem:dim1}
  Suppose $v,w \in \strings(\Lambda)$ and consider the following
  unfolded diagram:
\begin{equation}
\label{unfolded}
\tag{$*$} 
\xymatrix@R=3mm{ 
 & \ar@{.}[r]    & \xydot  \ar@{.}[r]^{v_L'} & \xydot \ar@{.>}[d]_{f_L} \ar@{.}[r]^{v_L} & \xydot  \ar@{=}[d] \ar@{-}[r]^{u_p} & \xydot \ar@{=}[d] \ar@{-}[r]^{u_{p-1}} & \cdots \ar@{-}[r]^{u_2}  & \xydot \ar@{=}[d] \ar@{-}[r]^{u_1} & \xydot \ar@{=}[d] \ar@{.}[r]^{v_R} & \xydot \ar@{.>}[d]^{f_R} \ar@{.}[r]^{v_R'} &  \xydot \ar@{.}[r]   &  \\
 & \ar@{.}[r]     & \xydot \ar@{.}[r]_{w_L'} & \xydot \ar@{.}[r]_{w_L}                  & \xydot \ar@{-}[r]_{u_p}                             & \xydot \ar@{-}[r]_{u_{p-1}}            & \cdots \ar@{-}[r]_{u_2}  & \xydot \ar@{-}[r]_{u_1}            & \xydot \ar@{.}[r]_{w_R}             & \xydot \ar@{.}[r]_{w_R'}                  & \xydot \ar@{.}[r]  &  
}
\end{equation}
\begin{enumerate}[label=(\arabic*)] 
\item \label{qgraph} Suppose \eqref{unfolded} represents a quasi-graph
  map $P_v \to \Sigma^{-1}P_w$.  If $v_L$ is non-zero, then one of
  $v_L'$ or $w_L'$ is zero.

\item \label{graph} Suppose \eqref{unfolded} represents a graph map
  $P_v \to P_w$. Then either
\begin{enumerate}[label=(\roman*)]
\item $f_L \neq 0$ and one of $v_L'$ or $w_L'$ is zero; or
\item $f_L = 0$ and  if $v_L$ is non-zero then $v_L =
  (a_{-1}...a_{-i})^{-1}$ for some $1 \leq i \leq m$.
\end{enumerate}
\end{enumerate}
Dual statements hold for the right-hand end of the diagram.
\end{lemma}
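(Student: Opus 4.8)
\textbf{Proof strategy for Lemma~\ref{lem:dim1}.}

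The plan is to exploit the very restrictive shape of the quiver $\Gamma$ defining $\Lambda(r,n,m)$, together with the bound $\hom_\Lambda(P(i),P(j)) \leq 1$ from Lemma~\ref{obs:one-map}. The key point is that in this quiver (when $r>1$) there is never a pair of distinct non-stationary paths with the same source and target, and the only vertices where two arrows leave (resp.\ enter) are very specific. I would first translate the (quasi-)graph map endpoint conditions \LGone--\LGtwo\ and the conditions defining quasi-graph maps into concrete statements about the homotopy letters $v_L, w_L, v_L', w_L'$ at the left-hand overlap, recalling from Section~\ref{sec:graph} that $v_L \neq w_L$ and $v_R \neq w_R$.

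For part \ref{qgraph}: if \eqref{unfolded} represents a quasi-graph map, then none of \LGone, \LGtwo, \RGone, \RGtwo\ hold. In particular, on the left-hand end the failure of \LGtwo\ forces $v_L$ and $w_L$ to be both direct or both inverse, and the failure of \LGone\ forces that there is no non-stationary path $f_L$ making square $(*)$ commute. I would analyse the two cases. First, suppose $v_L, w_L$ are both direct, say $v_L$ is a left substring continuing into $v_L'$. If $v_L'$ is also non-zero and direct, the composite $v_L' v_L$ exists; using condition~\ref{item:length2} (relations are length-2 paths) and the shape of a homotopy string I would show that the vertex $s(v_L)$ carries two outgoing arrows starting $v_L$ and $u_p$ respectively, which is one of the special vertices of $\Gamma$; then the uniqueness of paths in $\Lambda(r,n,m)$ together with the existence of $f_L$ (contradicting the failure of \LGone) would be derived unless $w_L'$ is zero. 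The case $v_L, w_L$ both inverse is handled dually, reading the diagram from right to left. The upshot is that $v_L' \neq 0$ and $w_L' \neq 0$ simultaneously forces a commuting square, contradicting that we have a genuine quasi-graph map.

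For part \ref{graph}: now \eqref{unfolded} is a genuine graph map, so one of \LGone\ or \LGtwo\ holds at the left. If \LGone\ holds there is a (scalar multiple of a) non-stationary path $f_L \neq 0$ making $(*)$ commute, and I would run essentially the same argument as in \ref{qgraph}: the existence of $f_L$ plus $v_L \neq w_L$ plus uniqueness of paths pins down that $v_L'$ or $w_L'$ must be zero, giving case~(i). If instead \LGtwo\ holds, then $f_L$ either does not make sense (the degrees don't match, so $f_L = 0$) — and in the remaining subcase $v_L$ non-zero, \LGtwo\ says $v_L$ is inverse; the only inverse homotopy letters that can sit at the penultimate position of a homotopy string over $\Lambda(r,n,m)$ without violating the homotopy-string axioms and the relation structure are of the form $(a_{-1} \ldots a_{-i})^{-1}$ (the $a$-arrows form the unique source linear segment of the quiver where an inverse letter can be followed on the left by the equalities $u_p, \ldots, u_1$), giving case~(ii). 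The right-hand statements follow by the evident left–right symmetry of the diagram \eqref{unfolded}, i.e.\ by applying the argument to the inverses of $v$ and $w$.

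\textbf{Main obstacle.} The delicate part is the case analysis in \ref{graph}(ii): one must verify carefully, using the explicit quiver of Figure~\ref{fig:tennis} and the homotopy-string conditions (1)--(4) of Section~\ref{sec:strings}, that an inverse homotopy letter $v_L$ sitting just to the left of the equal portion $u_p \cdots u_1$ — with $f_L$ necessarily zero — is forced to be a sub-antipath of $a_{-1}\ldots a_{-m}$ and cannot be, say, an inverse of a $c$- or $b$-path or a mixed path through vertex $0$. This requires tracking which compositions lie in the ideal $I$ around the two branch vertices $0$ and $n-r+1$, and is where the specific combinatorics of $\Lambda(r,n,m)$ (as opposed to a general gentle algebra) is essential.
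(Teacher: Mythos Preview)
Your approach is workable in spirit but substantially more complicated than the paper's, and as written it has a few genuine gaps.

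The paper's proof is essentially one line: by the classification lemma immediately preceding Lemma~\ref{obs:one-map}, every homotopy string over $\Lambda(r,n,m)$ is a subword of some $w_k$ (or of $w$ when $r=n$). In the $w_k$, every interior homotopy letter is one of the arrows $c_i$, so two distinct homotopy letters $v_L \neq w_L$ sitting just left of the common overlap can only differ if at least one of them is zero or is a (proper) subpath of one of the end letters $b_{n-r}\cdots b_0 a_{-1}\cdots a_{-m}$ or $(a_{-1}\cdots a_{-m})^{-1}$. In either situation the corresponding $v_L'$ or $w_L'$ is automatically zero. The same reasoning gives part~\ref{graph}; case~(ii) falls out immediately because the only inverse letters in any subword of $w_k$ are subwords of $(a_{-1}\cdots a_{-m})^{-1}$, and the presence of $u_p$ to the right forces the subword to begin at $a_{-1}$. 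No orientation case analysis or appeal to \LGone/\LGtwo\ is needed.

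Your argument, by contrast, tries to extract the conclusion from the abstract endpoint conditions and then feed in the quiver structure only at the end. Two concrete problems: first, your claim that the failure of \LGtwo\ forces $v_L$ and $w_L$ to be both direct or both inverse is wrong --- \LGtwo\ also fails when, say, $v_L$ is direct and $w_L$ is inverse, so your case split for part~\ref{qgraph} is incomplete. Second, in the ``both direct'' case your plan to manufacture a commuting $f_L$ from the non-vanishing of $v_L'$ and $w_L'$ is not right: the existence of $f_L$ depends on whether $w_L$ is a right factor of $v_L$ (or vice versa), which has nothing a priori to do with $v_L'$ or $w_L'$. Finally, in~(2)(ii) the phrase ``penultimate position'' is a slip --- $v_L$ need not be near the end of $v$ --- and the overlap lies to the right of $v_L$, not the left. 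The underlying intuition (inverse letters only come from the $a$-tail) is correct, but you get it for free from the classification of homotopy strings rather than from tracking the homotopy-string axioms around the branch vertices.
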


\begin{proof} 
  Recall from Definition \ref{def:quasigraph} that if $v_L$ is not
  zero then $v_L \neq w_L$.  We note that the only ways that $v_L$ and
  $w_L$ can differ is if one of them is zero or if one (or both) of
  them is a subpath of $b_{n-r}\ldots b_0 a_{-1} \ldots a_{-m}$, and
  it is clear that in all cases one of $v_L'$ or $w_L'$ is zero.  This
  shows \ref{qgraph}; \ref{graph} is similar.
\end{proof}

The upshot of Lemma~\ref{lem:dim1} is that any graph map $P_v \to P_w$
or quasi-graph map $P_v \to \Sigma^{-1}P_w$ spans every degree where
$P_w$ and $P_v$ are both non-zero.
 
\begin{theorem}
  Suppose $v,w \in \strings(\Lambda)$. If $r > 1$ then
  $\hom_{\Db(\Lambda)}(P_v, P_w) \leq 1$. If $r = 1$ then
  $\hom_{\Db(\Lambda)}(P_v,P_w) \leq 2$.
\end{theorem}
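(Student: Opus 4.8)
The plan is to read the dimension off the canonical basis of Theorem~\ref{thm:main}. Since $\Lambda(r,n,m)$ has no homotopy bands, every $v,w\in\strings(\Lambda)$ gives a string complex $Q_v=P_v$, $Q_w=P_w$, and
\[
\hom_{\Db(\Lambda)}(P_v,P_w)=|\singleone{v}{w}|+|\doubleone{v}{w}|+|\graph{v}{w}|+|\quasi{v}{w}|.
\]
So it suffices to show that for $r>1$ at most one of these basic maps can occur, while for $r=1$ at most one extra basic map can appear.

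First I would dispose of graph and quasi-graph maps. By the upshot of Lemma~\ref{lem:dim1}, every graph map $P_v\to P_w$, and every quasi-graph map $P_v\to\Sigma^{-1}P_w$, is \emph{global}: it has a (necessarily isomorphism) component in every degree in which both complexes are non-zero. Running through the normal forms for homotopy strings over $\Lambda(r,n,m)$ — the strings $w_k$, together with the left-infinite string $w$ when $r=n$ — one checks that there is essentially only one way for two such strings to overlap globally; hence any two graph maps $P_v\to P_w$ share an isomorphism component and so coincide by Lemma~\ref{lemma: basis maps determined}, giving $|\graph{v}{w}|\le 1$, and likewise $|\quasi{v}{w}|\le 1$. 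The same globality, again via the explicit string classification, rules out the coexistence of a graph map and a quasi-graph map. Finally, if a (quasi-)graph map $g$ is present then there is no singleton single or double map: the unique non-zero component of a singleton single map, and each component of a singleton double map, is a \emph{non-stationary} path sitting in a degree shared by source and target, where $g$ already forces an isomorphism; since $\hom_\Lambda(P(x),P(x))\le 1$ for $r>1$ by Lemma~\ref{obs:one-map}, the shape of the $w_k$ leaves no room for such a parallel path.

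It remains to bound $|\singleone{v}{w}|+|\doubleone{v}{w}|$ when no (quasi-)graph map occurs. Here I would argue directly from the combinatorics: a singleton single map has one of the forms $(i)$--$(iv)$ of Definition~\ref{def:singleton}, and a singleton double map satisfies condition \D\ of Definition~\ref{def:singleton-double}; in both cases the configuration is anchored at an endpoint of $v$ meeting an endpoint of $w$, or at a letter of $v$ overlapping a letter of $w$ in the same degrees with the same orientation, and the rigidity of the $w_k$ (and of $w$) permits at most one such configuration. Assembling the cases gives $\hom_{\Db(\Lambda)}(P_v,P_w)\le 1$ for $r>1$. For $r=1$ the only change is that $\hom_\Lambda(P(0),P(j))$ may equal $2$ by Lemma~\ref{obs:one-map}; since the global overlap meets the vertex $0$ at most once, this produces exactly one additional basic map — a copy of the unique basic map already found but with the component $P(0)\to P(j)$ replaced by the second path through $0$ — whence $\hom_{\Db(\Lambda)}(P_v,P_w)\le 2$. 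The main obstacle is the bookkeeping hidden in the word ``checks'': one must enumerate, from the normal forms $w_k$ and $w$, the possible overlaps of their subwords, verifying that a global overlap is essentially unique and precludes every other basic map, and taking care of the infinite strings that arise when $r=n$.
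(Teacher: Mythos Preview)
Your overall strategy---read off the bound from the canonical basis $\singleone{v}{w}\cup\doubleone{v}{w}\cup\graph{v}{w}\cup\quasi{v}{w}$ and use Lemma~\ref{lem:dim1} to make (quasi-)graph maps global---is exactly the paper's. But two steps in your execution are off.

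First, your exclusion argument for singleton maps when a (quasi-)graph map $g$ is present does not work as written. You invoke $\hom_\Lambda(P(x),P(x))\le 1$, but that inequality bounds morphisms between two fixed projectives; it says nothing about how many indecomposable summands sit in a given degree, nor does it prevent a non-stationary path $P(x)\to P(y)$ from coexisting with an identity $P(x')\to P(x')$ between different summands in the same degree. The paper does not argue this way. Instead it analyses the singleton maps directly: a case check against the shapes $w_k$ (and $w$ when $r=n$) shows that a singleton single map can only be of type~(i) in Definition~\ref{def:singleton}, and a singleton double map forces one of $v,w$ to be a homotopy string of length~$1$; in either of these very constrained situations one then sees immediately that no other basis map exists. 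This is the step you need to supply in place of the $\hom_\Lambda(P(x),P(x))$ argument.

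Second, your description of the $r=1$ case is incorrect. The extra basis element is not ``the same map with the component $P(0)\to P(j)$ replaced by the second path''. A graph map has identity components throughout its overlap; you cannot swap one of them for a non-stationary path and still have a graph map. What actually happens (and what the example following the theorem illustrates) is that a graph map and a \emph{separate} single map---a representative of a quasi-graph map---can coexist, each spanning the string by Lemmas~\ref{lemma: basis maps determined} and~\ref{lem:dim1}. The relevance of $\hom_\Lambda(P(0),P(j))=2$ is that it permits two linearly independent components in the same position, one belonging to each of these two distinct basis maps.
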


\begin{proof}
  For $r > 1$, observe that Lemma~\ref{obs:one-map} combines with
  Lemmas~\ref{lemma: basis maps determined} and \ref{lem:dim1} to give
  $\hom_{\Db(\Lambda)}(P_v,P_w)$ in the cases that there is a graph
  map $P_v \to P_w$ or a non-singleton homotopy class $P_v \to P_w$.

  For $r\geq 1$, we claim that if there is a single or double map $f
  \colon P_v \to P_w$ such that $\mcH(f)$ is a singleton homotopy
  class, then $\hom_\sD(P_v, P_w) = 1$. If $f$ is a single map, a case
  analysis reveals that, of the options presented in Definition
  $\ref{def:singleton}$, only $(i)$ could arise.  Clearly, there can
  be no other basis maps $P_v \to P_w$. Similarly, by considering all
  of the possible cases where $f$ is a double map, we find that either
  $v$ or $w$ is a homotopy string of length $1$ and that in each of
  these cases there is no other possible basis map.

  When $r=1$, the only way there can be more than one basis map $P_v
  \to P_w$ is if there is a graph map and a single map supported in
  the same degree, both the graph map and the homotopy class
  containing this single map will span the rest of the string, by
  Lemmas~\ref{lemma: basis maps determined} and \ref{lem:dim1}.
\end{proof}

\begin{example}
The following example shows that the upper bound can be attained.  Let
$\Lambda = \Lambda(1,1,3)$ and consider the homotopy strings $v =
(b_2b_1b_0, 2, 3)(b_2b_1b_0a_{-1}, 3, 4)$ and $w =
(a_{-1},2,1)(b_2b_1b_0, 1, 2)(b_2b_1b_0, 2, 3)(b_2b_1b_0a_{-1}, 3,
4)$.  Pictured below, from left to right: The
algebra $\Lambda(1,1,3)$, a graph map $P_v \to P_w$, and a
quasi-graph map $P_v \to \Sigma^{-1}P_w$.
\[
\begin{array}{cc}
\scalebox{0.9}{
\begin{tikzpicture}[scale=1,baseline=0.5cm]
  % tail nodes
  \node (-1) at (-1.5,0) [tinyvertex] {$-1$};
  \node (0) at (0,0) [tinyvertex] {$0$};
  % tail arrows
  \draw[->] (-1) to node[tinyvertex, above] {$a_{-1}$}(0);
  % cycle nodes
  \node (1) at (1.3,0.7) [tinyvertex] {$1$};
  \node (2) at (1.3,-0.7) [tinyvertex] {$2$};
  % cycle arrows
  \draw[->] (0.8,0) [partial ellipse=165:60:0.8cm]; %0->1
  \draw[->] (0.8,0) [partial ellipse=40:-43:0.8cm]; % 1->2
  \draw[->] (0.8,0) [partial ellipse=-65:-165:0.8cm]; %2->0
  % arrow labels
  \node at (0.2,0.8) [tinyvertex] {$b_0$};
  \node at (1.8,0) [tinyvertex] {$b_1$};
  \node at (0.3,-0.8) [tinyvertex] {$b_2$};
  % relations
  \draw (0) [dotted, thick, partial ellipse=-65:65:0.3cm and 0.3cm];
\end{tikzpicture}
}
&\scalebox{0.9}{
\xymatrix@R=1.5pc{ & & \xydot \ar[r]^{b_2b_1b_0} \ar@{=}[d] & \xydot \ar@{=}[d] \ar[r]^{b_2b_1b_0a_{-1}} & \xydot \ar@{=}[d] \\
\xydot & \xydot \ar[l]^{a_{-1}} \ar[r]_{b_2b_1b_0} & \xydot \ar[r]_{b_2b_1b_0} & \xydot \ar[r]_{b_2b_1b_0} & \xydot
} \qquad
\xymatrix@R=1.5pc{  & \xydot \ar[r]^{b_2b_1b_0} \ar@{=}[d] & \xydot \ar@{=}[d] \ar[r]^{b_2b_1b_0a_{-1}} & \xydot  & \\
\xydot & \xydot \ar[l]^{a_{-1}} \ar[r]_{b_2b_1b_0} & \xydot
\ar[r]_{b_2b_1b_0} & \xydot \ar[r]_{b_2b_1b_0} & \xydot
}}
\end{array}
\] 
\end{example}

%============================================================================
% Bibliography
%============================================================================

\end{document}